\DeclareMathOperator{\argmin}{argmin}
\DeclareMathOperator{\spt}{spt}
\DeclareMathOperator{\per}{Per}
\def\ds{\displaystyle}
\def\eps{{\varepsilon}}
\def\N{\mathbb{N}}
\def\R{\mathbb{R}}
\def\A{\mathcal{A}}
\def\EE{\mathcal{E}}
\def\F{\mathcal{F}}
\def\HH{\mathcal{H}}
\def\LL{\mathcal{L}}
\def\M{\mathcal{M}}
\def\PP{\mathcal{P}}
\def\RR{\mathcal{R}}
\def\symd{{\scriptstyle\Delta}}
\newcommand{\be}{\begin{equation}}
\newcommand{\ee}{\end{equation}}
\newcommand{\bib}[4]{\bibitem{#1}{\sc#2: }{\it#3. }{#4.}}
\newcommand{\Cp}{\mathop{\rm Cap}\nolimits}
\newcommand{\cp}{\mathop{\rm cap}\nolimits}
\numberwithin{equation}{section}
\theoremstyle{plain}
\newtheorem{teor}{Theorem}[section]
\newtheorem{lemm}[teor]{Lemma}
\newtheorem{prop}[teor]{Proposition}
\newtheorem{defi}[teor]{Definition}
\theoremstyle{remark}
\newtheorem{rema}[teor]{Remark}
\newtheorem{exam}[teor]{Example}
\title{Spectral Optimization Problems}
\author{Buttazzo, Giuseppe \thanks{\scriptsize Dipartimento di Matematica,
Universit\`a di Pisa, Largo B. Pontecorvo 5, 56127 Pisa, ITALY\quad {\texttt
{buttazzo@dm.unipi.it}}}}
\begin{document}
\maketitle

\begin{abstract}
In this survey paper we present a class of shape optimization problems where the cost function involves the solution of a PDE of elliptic type in the unknown domain. In particular, we consider cost functions which depend on the spectrum of an elliptic operator and we focus on the existence of an optimal domain. The known results are presented as well as a list of still open problems. Related fields as optimal partition problems, evolution flows, Cheeger-type problems, are also considered.
\end{abstract}

\textbf{Keywords:} optimization problems for eigenvalues, shape optimization, capacity, integral functionals, Sobolev spaces, optimality conditions, calculus of variations, relaxation.

\textbf{2000 Mathematics Subject Classification:} 49J45, 49R05, 35P15, 47A75, 35J25

\section{Introduction}\label{sintro}

Looking for optimal shapes is a very fascinating field, perhaps due to the fact that a shape is something closer to the human spirit than a function, in which a parametrization procedure is present, making it less direct and intuitive. However, from a mathematical point of view, handling shapes is much more difficult than handling functions, for instance because no vector space structure can be defined on a family of domains, which makes useless most of the functional analysis tools in Banach spaces.

A general shape optimization problem reads as
\be\label{shopt}
\min\big\{F(\Omega)\ :\ \Omega\in\A\big\}
\ee
where $\A$ is a suitable family of admissible domains and $F$ is a suitable cost function defined on $\A$. Problems of this kind arise in many fields and in many applications, and the literature is very wide, from the classical cases of isoperimetric problems and the Newton problem of the
best aerodynamical shape to the most recent applications to elasticity and to spectral optimization. Without any claim to be complete and exhaustive we quote the recent books \cite{all02,besi03,bubu05,hen06,hepi05,pir84,sozo92}, where the reader can find a lot of shape optimization problems together with all the necessary details and references.

A family of shapes is in general more rigid than a family of functions, in the sense that a sequence of shapes may have a limit which is no more a shape; then quite often problems like \eqref{shopt} do not have optimal solutions and optimal shapes do not exist, even for very natural cost functions $F$ and admissible domains $\A$. In these situations, in order to perform the analysis of minimizing sequences, a {\it relaxation} procedure is needed, enlarging the class $\A$ to objects that are only limits of shapes and defining the cost $F$ on these new objects.

It is not our goal to describe shape optimization problems and their relaxed formulations in full generality; we will limit ourselves to the cases where the cost $F$ is of the form
$$F(\Omega)=J(u_\Omega)$$
being $u_\Omega$ the solution of some elliptic PDE in $\Omega$. Two main cases fall for instance in this scheme: the case of integral functionals and the case of functions of the spectrum of an elliptic operator.

\medskip{\it Integral functionals.} Given a right-hand side $f$ we consider the PDE
$$-\Delta u=f\hbox{ in }\Omega,\qquad u\in H^1_0(\Omega)$$
which provides, for every admissible domain $\Omega\subset\R^d$, a unique solution $u_\Omega$ that we assume extended by zero outside of $\Omega$. The cost $F(\Omega)=J(u_\Omega)$ is obtained by taking
$$J(u)=\int_{\R^d}j\big(x,u(x)\big)\,dx$$
for a suitable integrand $j$.

\medskip{\it Spectral optimization.} For every admissible domain $\Omega$ consider the Dirichlet Laplacian $-\Delta$ which, under mild conditions on $\Omega$, admits a compact resolvent and so a discrete spectrum $\lambda(\Omega)$. The cost is in this case of the form
$$F(\Omega)=\Phi\big(\lambda(\Omega)\big)$$
for a suitable function $\Phi$. For instance, taking $\Phi(\lambda)=\lambda_k$ we may consider the optimization problem for the $k$-th eigenvalue of $-\Delta$:
$$\min\big\{\lambda_k(\Omega)\ :\ \Omega\in\A\big\}.$$

We will see that, adding suitable geometrical restrictions to the class $\A$ of admissible domains, gives some extra compactness properties and prevents the relaxation of minimizing sequences, thus providing in many cases the existence of optimal domains. In some other cases, the existence of optimal domains will be obtained without any geometrical restriction, as a consequence of some qualitative properties of the cost function.

The plan of the paper is as follows.
\begin{itemize}\renewcommand{\labelitemi}{$\circ$}
\item In Section \ref{sprel} we introduce all mathematical tools that are necessary to treat the optimization problems in the rest of the paper. In particular capacity, capacitary measures, $\gamma$-convergence are recalled, together with their main properties.
\item Section \ref{sinteg} deals with optimization problems for integral functionals of the form above. We show that in general one cannot expect that an optimal domain exists and the optimum has to be searched in the class of capacitary measures, that is the relaxation of the class of domains. Some necessary conditions of optimality are also provided.
\item In Section \ref{sspec} we consider the case of optimization problems for a function $\Phi\big(\lambda(\Omega)\big)$ where $\lambda(\Omega)$ is the spectrum of an elliptic operator, that we always take $-\Delta$ with Dirichlet conditions at the boundary.
\item Section \ref{stwo} deals with the particular case of cost functions which only depends on the first two eigenvalues, of the form $\Phi\big(\lambda_1(\Omega),\lambda_2(\Omega)\big)$. The particular form of the cost allows to obtain the existence of optimal domains under very mild assumptions on the function $\Phi$.
\item Section \ref{spart} is devoted to optimal partition problems, where the unknown is a partition $\bigcup_{i=1}^N\Omega_i$ of a given set $D$ in a given number $N$ of domains, and the cost function $F(\Omega_1,\dots,\Omega_N)$ is of a fairly general type. Some interesting cases of spectral costs are presented.
\item The last Section \ref{sfurth} collects some interesting directions of investigation that merit to be developed. For instance the case of Cheeger-type costs in which the product of two quantities, scaling in a different way, has to be minimized, as well as the study of {\it spectral flows}, evolution patterns that follow the gradient flow of a spectral cost functional. The case of boundary conditions different from Dirichlet type is briefly addressed and referred to the existing literature.
\end{itemize}

\section{Preliminary tools}\label{sprel}

\subsection{Capacity}\label{sscapa}

One of the key tools for treating optimization problems governed by elliptic equations is the notion of {\it capacity}.

\begin{defi}\label{dcap}
Given a subset $E$ of $\R^d$, we define the capacity of $E$ as
$$\Cp(E)=\inf\Big\{\int_{\R^d}|\nabla u|^2+u^2\,dx\ :\ u\in{\cal U}_E\Big\}\,,$$
where ${\cal U}_E$ is the set of all functions $u$ of the Sobolev space $H^1(\R^d)$ such that $u\ge1$ almost everywhere in a neighborhood of $E$. For every bounded open set $D$ of $\R^d$ we also define a local capacity as
$$\cp(E,D)=\inf\Big\{\int_D|\nabla u|^2\,dx\ :\ u\in{\cal U}_E\Big\}\,,$$
where now ${\cal U}_E$ is the set of all functions $u$ of the Sobolev space $H^1_0(D)$ such that $u\ge1$ almost everywhere in a neighborhood of $E$.
\end{defi}

Since we are mainly interested in sets with capacity zero the two notions above play the same role and in most of the situations they will be equivalent.

If a property $P(x)$ holds for all $x\in E$ except for the elements of a set $Z\subset E$ with $\Cp(Z)=0$, we say that $P(x)$ holds {\it quasi-everywhere} (shortly {\it q.e.}) on $E$, whereas the expression {\it almost everywhere} (shortly {\it a.e.}) refers, as usual, to the Lebesgue measure.

A subset $A$ of $\R^d$ is said to be {\it quasi-open} if for every $\eps>0$ there exists an open subset $A_\eps$ of $\R^d$, such that $\Cp(A_\eps{\scriptstyle\Delta}A)<\eps$, where ${\scriptstyle\Delta}$ denotes the symmetric difference of sets. Actually, in the definition above we can additionally require that $A\subset A_\eps$. Similarly, we define {\it quasi-closed} sets. The class of all quasi-open subsets of a given set $D$ will be denoted by $\A(D)$.

A function $f:D\to\R$ is said to be {\it quasi-continuous} (resp. {\it quasi-lower semicontinuous}) if for every $\eps>0$ there exists a continuous (resp. lower semicontinuous) function $f_\eps:D\to\R$ such that $\Cp(\{f\ne f_\eps\})<\eps$. It is well known (see for instance \cite{zie89}) that every function $u\in H^1(D)$ has a quasi-continuous representative $\tilde u$, which is uniquely defined up to a set of capacity zero, and given by
$$\tilde u(x)=\lim_{\eps\to0}\frac{1}{|B(x,\eps)|}\int_{B(x,\eps)}u(y)\,dy\,,$$
where $B(x,\eps)$ denotes the ball of radius $\eps$ centered at $x$. By an abuse of notation we always identify the function $u$ with its quasi-continuous representative $\tilde u$, so that a pointwise condition can be imposed on $u(x)$ for quasi-every $x\in D$. In this way, we have for every $E\subset D$
$$\cp(E,D)=\min\Big\{\int_D|\nabla u|^2\,dx\ :\ u\in H^1_0(D),\ u\ge1\hbox{ q.e. on }E\Big\}.$$
By the identification above, the quasi-open sets can be characterized as the sets of positivity of functions in $H^1(\R^d)$; more precisely, this means that $A\subset\R^d$ is quasi-open if and only if there exists a function $u\in H^1(\R^d)$ such that $A=\{\tilde u>0\}$.

For every quasi-open set $\Omega\subset D$ we denote by $H^1_0(\Omega)$ the space of all functions $u\in H^1_0(D)$ such that $u=0$ q.e. on $D\setminus \Omega$, with the Hilbert space structure inherited from $H^1_0(D)$, i.e.
$$\langle u,v\rangle_{H^1_0(\Omega)}=\langle u,v\rangle_{H^1_0(D)}.$$
Note that $H^1_0(\Omega)$ is a closed subspace of $H^1_0(D)$ (see for instance \cite{maz85,zie89}). Moreover, if $\Omega$ is an open set, 
the previous definition of $H^1_0(\Omega)$ is equivalent to the usual one
(see \cite{adhe96}).

Most of the well-known properties of Sobolev functions on open sets extend to quasi-open sets; for instance, if $\Omega_1,\Omega_2$ are two quasi-open sets disjoint in capacity, i.e. with $\Cp(\Omega_1\cap\Omega_2)=0$, then $H^1_0(\Omega_1\cup\Omega_2)=H^1_0(\Omega_1)\cap H^1_0(\Omega_2)$ in the sense that $u\in H^1_0(\Omega_1\cup\Omega_2)$ if and only if $u_{|\Omega_1}\in H^1_0(\Omega_1)$ and $u_{|\Omega_2}\in H^1_0(\Omega_2)$.

\begin{rema}\label{bbp30}
All the definitions above have a natural extension to the Sobolev spaces $W^{1,p}_0(\Omega)$ with $1<p<+\infty$. We refer to \cite{hkm93} for a review of the main definitions and properties of the $p$-capacity; in particular, when $p>d$, the $p$-capacity of a point is strictly positive and every $W^{1,p}$-function has a continuous representative and therefore, a property which holds $p$-quasi-everywhere, with $p>d$, holds in fact everywhere, which makes trivial several shape optimization problems. This is why, from the shape optimization point of view, the most interesting case is when $p\in(1,d]$.
\end{rema}

\subsection{$\Gamma$-convergence for sequences of functionals}\label{ssgamm}

In this section we recall briefly the definition and the main properties of the $\Gamma$-convergence, which revealed to be one of the key tools for handling variational problems. We do not want here to enter into the details of that theory, but only to use it in order to characterize the relaxed form of Dirichlet problems; we refer for all details to the books \cite{bra02,dm93}.

\begin{defi}\label{dgamma}
Given a sequence $(F_n)$ of functionals from a separable metric space $X$ into $\overline\R$ we say that $(F_n)$ $\Gamma$-converges to a functional $F$ if for every $x\in X$
\begin{enumerate}
\item $\forall x_n\to x\quad F(x)\le\liminf\limits_{n\to\infty}F_n(x_n)$;
\item $\exists x_n\to x\quad F(x)\ge\limsup\limits_{n\to\infty}F_n(x_n)$.
\end{enumerate}
\end{defi}

\begin{teor}\label{tgamma}
Properties of $\Gamma $-convergence.
\begin{enumerate}
\item Every $\Gamma$-limit is lower semicontinuous on $X$;
\item if $(F_n)$ is equi-coercive on $X$, that is for every $t\in\R$ the set $\bigcup_n\{F_n\le t\}$ is relatively compact in $X$, and $(F_n)$ $\Gamma$-converges to $F$, then $F$ is coercive too, and so it admits a minimum on $X$;
\item if $x_n\in\argmin F_n$ and $x_n\to x$ in $X$, then $x\in\argmin F$; \item from every sequence $(F_n)$ of functionals on $X$ it is possible to extract a subsequence $\Gamma$-converging to a functional $F$ on $X$;
\item we have $\Gamma\lim_n(G+F_n)=G+\Gamma\lim_nF_n$ for every functional $G$ which is continuous on $X$.
\end{enumerate}
\end{teor}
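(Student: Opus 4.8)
The plan is to treat the five assertions separately. Items (1), (3) and (5) will follow from direct manipulation of the two defining inequalities in Definition \ref{dgamma}; item (2) builds on (1) via the direct method; and item (4), the compactness statement, is the genuinely substantial one, where the separability of $X$ is indispensable. Throughout I write $d$ for a metric inducing the topology of $X$ and I use repeatedly that the recovery-sequence property combined with the liminf inequality forces $F(x)=\lim_n F_n(x_n)$ along a recovery sequence. For (1), I would prove lower semicontinuity of $F$ by a diagonal argument. Given $x_k\to x$ I may assume, after passing to a subsequence, that $F(x_k)\to\liminf_k F(x_k)=:\ell$. For each $k$ the recovery property gives $x_k^n\to x_k$ with $\limsup_n F_n(x_k^n)\le F(x_k)$; a standard diagonal extraction then selects indices $n_k\to\infty$ so that $y_k:=x_k^{n_k}\to x$ while $\liminf_k F_{n_k}(y_k)\le\ell$. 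Applying the liminf inequality to $y_k\to x$ yields $F(x)\le\liminf_k F_{n_k}(y_k)\le\ell$, which is exactly lower semicontinuity.

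For (2), I would first show that each sublevel set $\{F\le t\}$ is relatively compact. If $F(x)\le t$, a recovery sequence $x_n\to x$ satisfies $F_n(x_n)\to F(x)\le t$, so $x_n\in\{F_n\le t+1\}$ for $n$ large; hence $x$ lies in the closure of $\bigcup_n\{F_n\le t+1\}$, which is compact by equi-coercivity. Thus every sublevel set of $F$ is relatively compact, and being closed by (1) it is in fact compact, so $F$ is coercive. Coercivity and lower semicontinuity then produce a minimizer by the direct method, without needing completeness of $X$: a minimizing sequence lies eventually in a fixed compact sublevel set, a convergent subsequence exists, and its limit realizes the minimum by lower semicontinuity.

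For (3), let $y\in X$ be arbitrary and pick a recovery sequence $y_n\to y$ with $\limsup_n F_n(y_n)\le F(y)$. Minimality of $x_n$ gives $F_n(x_n)\le F_n(y_n)$, so the liminf inequality yields $F(x)\le\liminf_n F_n(x_n)\le\limsup_n F_n(y_n)\le F(y)$; as $y$ is arbitrary, $x\in\argmin F$. Assertion (5) is equally short and uses only that continuity of $G$ makes $G(x_n)\to G(x)$ a genuine limit. Along any $x_n\to x$ one has $\liminf_n(G+F_n)(x_n)=G(x)+\liminf_n F_n(x_n)\ge(G+F)(x)$, giving the liminf inequality; and a recovery sequence $x_n\to x$ for $F$ satisfies $\limsup_n(G+F_n)(x_n)=G(x)+\limsup_n F_n(x_n)\le(G+F)(x)$, so it is automatically a recovery sequence for $G+F$.

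The hard step is (4). Here I would fix a countable base $\{U_k\}$ of open sets and work from the neighborhood characterizations $\Gamma\text{-}\liminf_n F_n(x)=\sup_{U\ni x}\liminf_n\inf_U F_n$ and $\Gamma\text{-}\limsup_n F_n(x)=\sup_{U\ni x}\limsup_n\inf_U F_n$, in which the suprema may be restricted to basic open neighborhoods of $x$. Since $\overline\R$ is sequentially compact, for each fixed $k$ the sequence $(\inf_{U_k}F_n)_n$ admits a convergent subsequence, and a diagonal extraction over the countably many indices $k$ yields a single subsequence $(F_{n_j})$ along which $\lim_j\inf_{U_k}F_{n_j}$ exists for every $k$. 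For this subsequence the two characterizations above coincide, so the $\Gamma$-liminf and $\Gamma$-limsup agree and $(F_{n_j})$ $\Gamma$-converges. I expect the main obstacle to be exactly the establishment of these neighborhood formulas and the verification that restricting the suprema to basic neighborhoods is legitimate; once that reduction is in place the diagonalization is routine, and this is precisely the point at which separability of $X$ enters.
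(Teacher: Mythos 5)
The paper itself does not prove Theorem \ref{tgamma}: it is recalled as a known result, with the reader referred to the books \cite{bra02,dm93}, so there is no internal proof to compare against. Your proposal is correct and follows exactly the classical route of those references (notably Chapter~8 of \cite{dm93}): items (1), (3), (5) by direct manipulation of the two inequalities in Definition \ref{dgamma}, item (2) by trapping sublevel sets of $F$ inside the equi-coercive sublevels and then running the direct method, and item (4) by the countable-base/diagonal argument. Two points deserve care. First, in item (1), after the diagonal extraction you apply the liminf inequality to the pair $(y_k,F_{n_k})$, whereas Definition \ref{dgamma} concerns full sequences indexed by $n$; the standard repair is to extend $(y_k)$ to a full sequence $z_n\to x$, constant on the blocks $n_k\le n<n_{k+1}$, so that $F(x)\le\liminf_n F_n(z_n)\le\liminf_k F_{n_k}(y_k)\le\ell$ (equivalently, one first records that every subsequence of a $\Gamma$-convergent sequence $\Gamma$-converges to the same limit). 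Second, in item (4) the neighborhood formulas $\sup_{U\ni x}\liminf_n\inf_U F_n$ and $\sup_{U\ni x}\limsup_n\inf_U F_n$ for the $\Gamma$-lower and $\Gamma$-upper limits are the real substance of the compactness theorem: the inequality ``$\ge$'' is easy, but the converse requires constructing, by a further diagonal argument, a sequence $x_n\to x$ that realizes the supremum, and your proposal defers precisely this step (it is Proposition~8.1 of \cite{dm93}). Your remaining reductions are sound: restricting the suprema to a countable base is immediate from the monotonicity of $U\mapsto\inf_U F_n$ under inclusion together with cofinality of basic neighborhoods, and this is indeed where separability of $X$ enters.
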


Here we used the notation $\argmin F$ to indicate the set of all minimum points of $F$ on $X$. The $\Gamma$-convergence provides a convergence structure on the class $\F$ of all lower semicontinuous functionals on $X$; however, in several situations the functionals under considerations fulfill an equicoercivity condition that allows us to introduce a restricted subclass:
$$\F_\Psi(X)=\{F:X\to\overline\R,\ F\hbox{ lower semicontinuous,}\ F\ge\Psi\}$$
where $\Psi$ is a given functional on $X$. The following result concerning metrizability of the $\Gamma$-convergence will be used (see \cite{dm93}, Theorem 10.22).

\begin{teor}\label{tmetrg}
If $\Psi:X\to\overline\R$ is coercive, i.e. its sublevels $\{\Psi\le t\}$ are relatively compact, then the class $\F_\Psi(X)$, endowed with the $\Gamma$-convergence structure, is a compact metric space, in the sense that there exists a compact distance $\delta_\Gamma$ on  $\F_\Psi(X)$ such that
$$\delta_\Gamma(F_n,F)\to0\quad\iff\quad\Gamma\!\lim_{n\to\infty}F_n=F.$$
\end{teor}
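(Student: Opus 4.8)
The plan is to realize $\Gamma$-convergence on $\F_\Psi(X)$ as the convergence of a countable family of continuous real-valued observables, and then to pull back a standard compact metric through them. (Throughout I take $\Psi$ lower semicontinuous, so that, being coercive, it is bounded below.) The observables I would use are the Moreau--Yosida transforms: fix a bounded metric $\rho$ inducing the topology of $X$ and, for $F\in\F_\Psi(X)$ and $\lambda>0$, set
$$M_\lambda F(x)=\inf_{y\in X}\big\{F(y)+\lambda\,\rho(x,y)\big\}.$$
First I would record the elementary facts that, for each fixed $\lambda$, the family $\{M_\lambda F_n\}_n$ is $\lambda$-Lipschitz uniformly in $n$, that $\lambda\mapsto M_\lambda F$ is nondecreasing, and that $M_\lambda F\nearrow F$ pointwise as $\lambda\to\infty$ since $F$ is lower semicontinuous and bounded below. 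Coercivity of $\Psi$ (together with $F\ge\Psi$) guarantees that the infimum defining $M_\lambda F$ is essentially attained on a relatively compact sublevel of $\Psi$, which is precisely the point where the hypothesis enters.

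The heart of the argument is the equivalence
$$\Gamma\lim_n F_n=F\quad\Longleftrightarrow\quad M_\lambda F_n(x)\to M_\lambda F(x)\ \text{ for every } x\in X \text{ and every } \lambda>0.$$
For ($\Rightarrow$) I would prove the upper bound $\limsup_n M_\lambda F_n(x)\le M_\lambda F(x)$ by inserting, for each competitor $y$, a recovery sequence $y_n\to y$ with $F_n(y_n)\to F(y)$ into the definition of $M_\lambda F_n(x)$; and the lower bound $\liminf_n M_\lambda F_n(x)\ge M_\lambda F(x)$ by choosing near-optimal points $y_n$, extracting via coercivity a convergent subsequence $y_n\to y$, and applying the $\liminf$ inequality of $\Gamma$-convergence. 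For ($\Leftarrow$), the $\liminf$ inequality for $(F_n)$ follows by letting $\lambda\to\infty$ in $M_\lambda F_n(x)\le F_n(x_n)+\lambda\rho(x,x_n)$ along any $x_n\to x$. The recovery ($\limsup$) inequality is the more delicate part, and I expect it to be the main obstacle: it requires a diagonal construction, picking near-minimizers $y(n,\lambda)$, observing that coercivity forces $\rho(x,y(n,\lambda))$ to be small for large $\lambda$, and then letting $\lambda=\lambda(n)\to\infty$ slowly so that $x_n:=y(n,\lambda(n))\to x$ is a genuine recovery sequence with $\limsup_n F_n(x_n)\le F(x)$.

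Once this equivalence is in hand, metrization is immediate. I would fix a countable dense set $\{x_j\}_{j\in\N}$ in $X$ and a sequence $\lambda_i\to\infty$, and set
$$\delta_\Gamma(F,G)=\sum_{i,j\in\N}2^{-i-j}\,\big|\arctan M_{\lambda_i}F(x_j)-\arctan M_{\lambda_i}G(x_j)\big|.$$
By monotonicity of $M_\lambda F$ in $\lambda$, the single increasing sequence $\lambda_i\to\infty$ already detects the convergence appearing in the equivalence, since both directions above only ever push $\lambda\to\infty$. For each fixed $i$ the functions $M_{\lambda_i}F_n$ are equi-Lipschitz in $n$, so pointwise convergence on the dense set $\{x_j\}$ upgrades to locally uniform convergence, whence $\delta_\Gamma(F_n,F)\to0$ is exactly the right-hand side of the equivalence, hence exactly $\Gamma$-convergence; moreover $\delta_\Gamma$ separates points because a lower semicontinuous $F$ is recovered from its transforms via $F=\sup_\lambda M_\lambda F$.

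Finally, for compactness I would invoke Theorem \ref{tgamma}(4) to extract from any sequence in $\F_\Psi(X)$ a $\Gamma$-convergent subsequence. Its limit $F$ is lower semicontinuous by Theorem \ref{tgamma}(1), and it still dominates $\Psi$: along a recovery sequence $x_k\to x$ one has $F(x)=\lim_k F_{n_k}(x_k)\ge\liminf_k\Psi(x_k)\ge\Psi(x)$ by lower semicontinuity of $\Psi$, so $F\in\F_\Psi(X)$. Thus $\F_\Psi(X)$ is sequentially compact, and since it is metrized by $\delta_\Gamma$ it is a compact metric space, which is the assertion.
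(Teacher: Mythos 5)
The paper does not prove Theorem \ref{tmetrg}: it quotes the result from \cite{dm93} (Theorem 10.22), so there is no internal proof to compare with. Your argument is correct in its substance and follows what is essentially the standard route to this metrizability result (compare also \cite{att84} for the epi-convergence version of the key equivalence): for an equi-coercive, uniformly minorized class, $\Gamma$-convergence is equivalent to pointwise convergence of the Moreau--Yosida transforms, and pointwise convergence of an equi-Lipschitz family is metrized through a countable dense set of points and a single sequence $\lambda_i\to\infty$, which suffices by monotonicity in $\lambda$. The two genuinely delicate implications are handled correctly: relative compactness of the sublevels of $\Psi$ enters exactly where it must, namely to extract a convergent subsequence of near-minimizers in the lower-bound half of $(\Rightarrow)$, and the diagonal construction in $(\Leftarrow)$ is the right way to manufacture recovery sequences. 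Compactness then follows from Theorem \ref{tgamma}(4) together with the $\Gamma$-closedness of $\F_\Psi(X)$, as you argue.

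Three points deserve attention, none of which undermines the proof. (i) Your standing assumption that $\Psi$ be lower semicontinuous is not a convenience but a necessity, and you use it exactly once, in the final closure argument. With only ``relatively compact sublevels,'' as the survey literally states, the theorem is false: take $X=[0,1]$, $\Psi(0)=1$, $\Psi=0$ elsewhere, and $F_n(x)=\max(0,1-nx)$; each $F_n\in\F_\Psi(X)$, but every subsequence $\Gamma$-converges to the zero functional, which lies outside $\F_\Psi(X)$, so by uniqueness of $\Gamma$-limits no compact distance with the stated property can exist on the class. You have thus (correctly) restored a hypothesis that the survey's wording omits. (ii) The parenthetical inference ``lsc and coercive, hence bounded below'' fails if $\Psi$ attains the value $-\infty$: in that case $\F_\Psi(X)$ contains functionals taking $-\infty$, all of whose transforms are identically $-\infty$ (with your bounded $\rho$, $M_\lambda F(x)\le F(x_0)+\lambda\rho(x,x_0)=-\infty$), and $\delta_\Gamma$ cannot separate them. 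You should add the assumption that $\Psi>-\infty$ everywhere; lower semicontinuity, coercivity and properness together do give $\inf\Psi>-\infty$, and then everything you wrote goes through. This costs nothing in any application in the paper, where $\Psi\ge0$. (iii) A small misattribution: in the recovery step of $(\Leftarrow)$, what forces $\rho\big(x,y(n,\lambda)\big)\le C/\lambda$ is the uniform lower bound $\inf\Psi>-\infty$ together with the finiteness of $\lim_n M_\lambda F_n(x)=M_\lambda F(x)\le F(x)$, not coercivity; compactness of sublevels is needed only in the $(\Rightarrow)$ direction.
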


\subsection{$\gamma$-convergence for sequences of quasi-open sets}\label{ssgamq}

Given a bounded domain $D\subset\R^d$ and a right-hand side $f\in L^2(D)$ we are interested to study how the solution $u_\Omega$ of the elliptic problem
$$-\Delta u=f\hbox{ in }\Omega,\qquad u\in H^1_0(\Omega)$$
depends on the domain $\Omega\in\A(D)$. The solutions are extended by zero on $D\setminus\Omega$. Since the equation above can be also written as the minimum problem
$$\min\Big\{\int_D\Big[\frac{1}{2}|\nabla u|^2-f(x)u\Big]\,dx\ :\ u\in H^1_0(\Omega)\Big\},$$
by the definition of $\Gamma$-convergence introduced in Section \ref{ssgamm} the study of the limits of $(u_{\Omega_n})$ for a given sequence $(\Omega_n)$ of domains can be reduced to the study of the $\Gamma$-limit of the sequence of functionals
$$\int_D\Big[\frac{1}{2}|\nabla u|^2-f(x)u\Big]\,dx+I_{\Omega_n}(u)$$
where
$$I_\Omega(u)=\left\{
\begin{array}{ll}
0&\hbox{if }u\in H^1_0(\Omega)\\
+\infty&\hbox{otherwise.}
\end{array}
\right.$$
All the functionals above are defined on the separable metric space $H^1_0(D)$ endowed with the $L^2(D)$ convergence. Since the term $\int_D f(x)u\,dx$ is continuous, by Theorem \ref{tgamma} it is enough to study the $\Gamma$-limit of the functionals
\be\label{gammafunc}
F_n(u)=\int_D|\nabla u|^2\,dx+I_{\Omega_n}(u).
\ee
Of course, when $(\Omega_n)$ converges to $\Omega$ in a rather strong way, for instance when $\Omega_n=(Id+\eps_nV)(\Omega)$ with $\eps_n\to0$ and $V\in C^1_c(\R^d;\R^d)$, the $\Gamma$-limit of the sequence $F_n$ above is the functional
$$F(u)=\int_D|\nabla u|^2\,dx+I_{\Omega}(u).$$
However, in shape optimization problems, we are interested to work with weaker convergences, in order to ensure, via the direct methods of the calculus of variations, the existence of an optimal domain. Therefore we look for a kind of equilibrium between a convergence strong enough to provide continuity, or at least lower semicontinuity, of the cost functionals, and on the other hand weak enough to provide the compactness of minimizing sequences. When this equilibrium is achieved one obtains the existence of an optimal solution.

A very natural definition is the following one.

\begin{defi}\label{dgammao}
We say that a sequence of quasi-open sets $(\Omega_n)$ in $\A(D)$ $\gamma$-converges to a quasi-open set $\Omega\in\A(D)$ if the corresponding functionals $\int_D|\nabla u|^2\,dx+I_{\Omega_n}(u)$ $\Gamma$-converge to the functional $\int_D|\nabla u|^2\,dx+I_\Omega(u)$, in the metric space $H^1_0(D)$ endowed with the $L^2(D)$ convergence.
\end{defi}

By the properties of the $\Gamma$-convergence seen in Section \ref{ssgamm} we have $\Omega_n\to\Omega$ in the $\gamma$-convergence if and only if for every right-hand side $f\in L^2(D)$ the solutions $u_{\Omega_n,f}$ of
$$-\Delta u=f\hbox{ in }\Omega_n,\qquad u\in H^1_0(\Omega_n)$$
(extended by zero on $D\setminus\Omega_n$) converge in $L^2(D)$ to the solution $u_{\Omega,f}$ of
$$-\Delta u=f\hbox{ in }\Omega,\qquad u\in H^1_0(\Omega).$$
Note that the sequence $(u_{\Omega,f})$ is bounded in $H^1_0(D)$, hence compact in $L^2(D)$.

\begin{rema}\label{rgamma}
The following facts for the $\gamma$-convergence hold.

\begin{enumerate}
\item By its definition, and by the equicoercivity of the functionals \eqref{gammafunc}, the $\gamma$-convergence is metrizable on $\A(D)$.
\item It can be proven (see \cite{bubu05}) that the $L^2(D)$ convergence of $u_{\Omega_n,f}$ to $u_{\Omega,f}$ for every $f$ holds if and only if it holds for $f\equiv1$. Therefore a distance on $\A(D)$ equivalent to the $\gamma$-convergence is
$$d_\gamma(\Omega_1,\Omega_2)=\|u_{\Omega_1,1}-u_{\Omega_2,1}\|_{L^2(D)}.$$
\item For every $\Omega\in\A(D)$ we may define the resolvent operator $\RR_\Omega$ which associates to every $f\in L^2(D)$ the solution $u_{\Omega,f}$; in this way the $\gamma$-convergence of $(\Omega_n)$ to $\Omega$ coincides with the pointwise $L^2(D)$ convergence of resolvent operators $\RR_{\Omega_n}$ to $\RR_\Omega$.
\item It can actually be proven (see \cite{bubu05}) that if $\Omega_n\to\Omega$ in the $\gamma$-convergence, the convergence of $u_{\Omega_n,f}$ to $u_{\Omega,f}$ is in fact strong in $H^1_0(D)$ and the resolvent operators $\RR_{\Omega_n}$ converge to $\RR_\Omega$ in the operator norm $\LL\big(L^2(D)\big)$. In particular, the spectrum of $\RR_{\Omega_n}$ converges (componentwise) to the spectrum of $\RR_\Omega$, hence the spectrum of $-\Delta$ on $H^1_0(\Omega_n)$ converges (componentwise) to the spectrum of $-\Delta$ on $H^1_0(\Omega)$.
\end{enumerate}
\end{rema}

\subsection{Capacitary measures}\label{ssmeas}

Unfortunately, the $\gamma$-convergence introduced in Section \ref{ssgamq} is not compact; in other words it is possible to construct a sequence $(\Omega_n)$ of domains (even smooth) such that the corresponding solutions $u_{\Omega_n,1}$ do not converge to a function of the form $u_{\Omega,1}$.

The first example of a sequence $(\Omega_n)$ of this form was provided by Cioranescu and Murat in \cite{cimu82}: they constructed the sequence $(\Omega_n)$ by removing from $D$ a periodic array of balls of equal radius $r_n\to0$. If the radius $r_n$ is suitably chosen they proved (see also \cite{bubu05} for an interpretation in terms of $\gamma$-convergence) that the weak $H^1_0(D)$ limit of the sequence of solutions $u_{\Omega_n,1}$ satisfies the PDE
$$-\Delta u+cu=1\hbox{ in }D,\qquad u\in H^1_0(D)$$
for a suitable constant $c>0$, and thus the sequence of domains $(\Omega_n)$ cannot $\gamma$-converge to any domain $\Omega$. The sets $\Omega_n$ are represented in Figure \ref{fciomur}

\begin{figure}[h]
\centerline{\includegraphics[width=7.0cm]{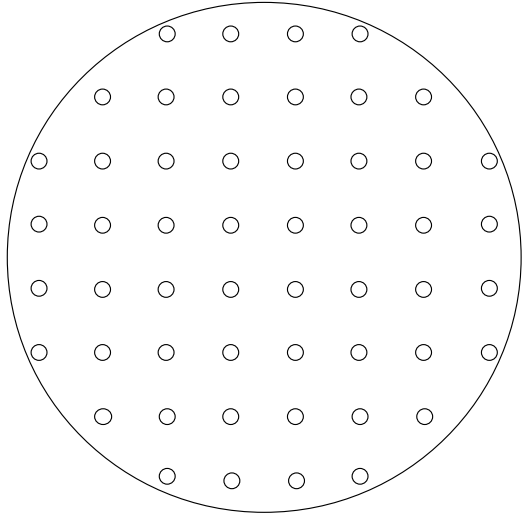}}
\caption{The sets $\Omega_n$ in the Cioranescu and Murat example.}\label{fciomur}
\end{figure}

If we indicate by $\eps_n$ the distance between two adjacent centers, then the critical size of the radius $r_n$ is
$$r_n=c\eps_n^{d/(d-2)}\hbox{ if }d>2,\qquad r_n=\exp(-c\eps_n^{-2})\hbox{ if }d=2$$
with $c>0$. In this way the capacity of the array of removed balls is of order $O(1)$, while the corresponding total volume vanishes as $\eps_n\to0$.

It is interesting to notice that, for any sequence $(\Omega_n)$ of domains in $D$ and for any $f\in L^2(D)$, the solutions $u_{\Omega_n,f}$ are bounded in $H^1_0(D)$, so up to extracting a subsequence, they weakly converge to some function $w\in H^1_0(D)$. In order to characterize this function $w$ as the solution of some limit PDE we have to identify the compactification of the metric space $\A(D)$ of all quasi-open subsets of $D$, endowed with the $\gamma$-convergence.

This identification has been obtained by Dal Maso and Mosco in \cite{dmmo87}, where it is proven that the compactification of the metric space $\A(D)$ endowed with the $\gamma$-convergence is the set $\M_0(D)$ of all nonnegative regular Borel measures $\mu$ on $D$, possibly $+\infty$ valued, such that
$$\mu(B)=0\hbox{ for every Borel set $B\subset D$ with }\Cp(B)=0.$$
We stress the fact that the measures $\mu\in\M_0(D)$ do not need to be finite, and may take the value $+\infty$ even on large parts of $D$.

\begin{exam}\label{emeas}
\begin{enumerate}
\item The Dirac measure $\delta_{x_0}$ does not belong to $\M_0(D)$ because it does not vanish on the set $\{x_0\}$ which has capacity zero when $d\ge2$. Similarly, the Hausdorff measures $\HH^k\lfloor S$, with $S$ manifold of dimension $k$, do not belong to $\M_0(D)$ when $d\ge k+2$.

\item If $d-2<\alpha\le d$ the $\alpha$-dimensional Hausdorff measure $\HH^\alpha$  belongs to $\M_0(D)$, and consequently every $\mu$ absolutely continuous with respect to $\HH^\alpha$ as well. In fact every Borel set with capacity zero has an Hausdorff dimension which is less than or equal to $d-2$.

\item For every $S\subset D$, the measure $\infty_S$ defined by
\begin{equation}\label{e421}
\infty_S(B)=\left\{
\begin{array}{ll}
0&\mbox{if }\Cp(B\cap S)=0,\\
+\infty&\mbox{otherwise}.
\end{array}
\right.
\end{equation}
belongs to the class $\M_0(D)$.
\end{enumerate}
\end{exam}

Given $\mu\in\M_0(D)$ we can consider the relaxed form of the Dirichlet problem by introducing the space $X_\mu(D)$ as the vector space of all functions $u\in H^1_0(D)$ such that $\int_D u^2\,d\mu<\infty$. Note that, since $\mu$ vanishes on all sets with capacity zero and since Sobolev functions are defined up to sets of capacity zero, the definition of $X_\mu(D)$ is well posed. We may think to $X_\mu(D)$ as to the Hilbert space (see \cite{budm91}) $H^1_0(D)\cap L^2(D,\mu)$, endowed with the norm
$$\|u\|_{X_\mu(D)}=\Big(\int_D|\nabla u|^2\,dx+\int_D u^2\,d\mu\Big)^{1/2}$$
which comes from the scalar product
$$(u,v)_{X_\mu(D)}=\int_D \nabla u\nabla v\,dx+\int_D uv\,d\mu.$$
Given $f\in L^2(D)$ we can now consider the relaxed Dirichlet problem
$$-\Delta u+\mu u=f\hbox{ in }D,\qquad u\in X_\mu(D)$$
whose precise meaning has to be given in the weak form
$$u\in X_\mu(D),\qquad\int_D\nabla u\nabla v\,dx+\int_D uv\,d\mu=\int_Dfv\,dx\quad\forall v\in X_\mu(D).$$
The usual Lax-Milgram method in the Hilbert space $X_\mu(D)$ provides, for every $\mu\in\M_0(D)$ and every $f\in L^2(D)$, a unique solution $u_{\mu,f}$. In this way we can construct the resolvent operator $\RR_\mu$ which associates to every $f\in L^2(D)$ the solution $u_{\mu,f}$.

\begin{exam}\label{x29}
\begin{enumerate}
\item Take $\mu=a(x)\,dx$ where $a\in L^p(D)$ with $p\ge d/2$ (any $p>1$ if $d=2$). Then, by the Sobolev embedding theorem and H\"older inequality, we have that $X_\mu(D)=H^1_0(D)$ with equivalent norms.

\item If $\Omega$ is a quasi-open subset of $D$ and $\mu=\infty_{D\setminus\Omega}$ then the space $X_\mu(D)$ coincides with the Sobolev space $H^1_0(\Omega)$ and the solution $u_{\mu,f}$ defined above coincides with the solution $u_{\Omega,f}$ of the Dirichlet problem in $\Omega$. Thus we can identify the domain $\Omega$ to the capacitary measure $\mu=\infty_{D\setminus\Omega}$.

\item If $\mu\in\M_0(D)$ and $f\ge0$, then by maximum principle the solution $u=\RR_\mu(f)$ is nonnegative, and then also $f+\Delta u=\mu u$ is nonnegative. On the other hand, if $f>0$ we can formally write
$$\mu=\frac{f+\Delta u}{u}$$
which gives $\mu$ once $u$ is known; of course it turns out that $\mu=+\infty$ whenever $u=0$. This argument can be made rigorous (see \cite{chdm92,bubu05}) and therefore, working with the class $\M_0(D)$ is in this case equivalent to work with the class of functions $\{u\in H^1_0(D),\ u\ge0,\ \Delta u+f\ge0\}$, which is a closed convex subset of the Sobolev space $H^1_0(D)$.
\end{enumerate}
\end{exam}

The $\gamma$-convergence can be extended to the space $\M_0(D)$: we have $\mu_n\to\mu$ in the $\gamma$-convergence if the solutions $\RR_{\mu_n}(f)$ converge to $\RR_\mu(f)$ weakly in $H^1_0(D)$ for every $f\in L^2(D)$. Again, it can be proven that this is equivalent to require the convergence only for $f\equiv1$.

\begin{prop}\label{pmeas}
The properties below for the $\gamma$-convergence on the space $\M_0(D)$ hold.

\begin{enumerate}
\item The space $\M_0(D)$ endowed with the $\gamma$-convergence is a compact metric space; a distance equivalent to the $\gamma$-convergence is
$$d_\gamma(\mu_1,\mu_2)=\|\RR_{\mu_1}(1)-\RR_{\mu_2}(1)\|_{L^2(D)}.$$

\item The class $\A(D)$ is included in $\M_0(D)$ via the identification $\Omega\mapsto\infty_{D\setminus\Omega}$ and $\A(D)$ is dense in $\M_0(D)$ for the $\gamma$-convergence. Actually also the class of all smooth domains $\Omega$ is dense in $\M_0(D)$.

\item The measures of the form $a(x)\,dx$ with $a\in L^1(D)$ belong to $\M_0(D)$ and are dense in $\M_0(D)$ for the $\gamma$-convergence. Actually also the class of measures $a(x)\,dx$ with $a$ smooth is dense in $\M_0(D)$.

\item If $\mu_n\to\mu$ for the $\gamma$-convergence, then the spectrum of the compact resolvent operator $\RR_{\mu_n}$ converges to the spectrum of $\RR_\mu$; in other words, the eigenvalues of the Schr\"odinger-like operator $-\Delta+\mu_n$ defined on $H^1_0(D)$ converge to the corresponding eigenvalues of the operator $-\Delta+\mu$.

\end{enumerate}
\end{prop}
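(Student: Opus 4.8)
The plan is to read the $\gamma$-convergence on $\M_0(D)$ entirely through the $\Gamma$-convergence machinery of Section \ref{ssgamm}. To each $\mu\in\M_0(D)$ I associate the quadratic functional
$$F_\mu(u)=\int_D|\nabla u|^2\,dx+\int_D u^2\,d\mu$$
on $X=L^2(D)$, with $F_\mu(u)=+\infty$ whenever $u\notin X_\mu(D)$. By Definition \ref{dgammao}, extended to measures as in the text preceding the statement, the $\gamma$-convergence $\mu_n\to\mu$ is precisely the $\Gamma$-convergence $F_{\mu_n}\to F_\mu$ in $L^2(D)$, so the whole proposition becomes a statement about the space of such functionals.

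For Part (1) every $F_\mu$ dominates the single functional $\Psi(u)=\int_D|\nabla u|^2\,dx$ (set to $+\infty$ off $H^1_0(D)$), which is coercive on $L^2(D)$: since $D$ is bounded, its sublevels are bounded in $H^1_0(D)$, hence relatively compact in $L^2(D)$ by Rellich. Thus $\{F_\mu\}\subset\F_\Psi(L^2(D))$, and Theorem \ref{tmetrg} equips the ambient class with a compact metric structure. To deduce compactness of $\M_0(D)$ itself I must check that this family is closed under $\Gamma$-convergence, i.e. that every $\Gamma$-limit of such functionals is again of the form $F_\mu$; this is exactly the Dal Maso--Mosco identification \cite{dmmo87} of $\M_0(D)$ as the $\gamma$-compactification, which I would invoke rather than reprove. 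For the explicit distance, symmetry and the triangle inequality are inherited from the $L^2$-norm; that $d_\gamma(\mu_1,\mu_2)=0$ forces $\mu_1=\mu_2$ follows from injectivity of $\mu\mapsto\RR_\mu(1)$, which is the reconstruction formula $\mu=(1+\Delta u)/u$ of Example \ref{x29}(3) applied to $u=\RR_\mu(1)$; finally, equivalence of $d_\gamma$ with the $\gamma$-convergence rests on the fact (stated in the text, proved in \cite{bubu05}) that testing the resolvents against $f\equiv1$ alone already characterizes the convergence.

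Parts (2) and (3) are largely structural once the compactification is granted: density of $\A(D)$ is built into the word ``compactification'' in \cite{dmmo87}. To reach smooth domains I would approximate a quasi-open $\Omega=\{\tilde u>0\}$ by smooth superlevel sets of a mollification of $u$. For densities I first note $a\,dx\in\M_0(D)$ when $a\in L^1(D)$, since a capacity-zero set is Lebesgue-null; density then comes from a double approximation: each $\infty_{D\setminus\Omega}$ is the $\gamma$-limit of the penalizations $k\,\mathbf 1_{D\setminus\Omega}\,dx$ as $k\to\infty$ (a monotone $\Gamma$-convergence computation), and bounded densities are mollified to smooth ones, which combined with Part (2) yields density of $\{a\,dx:a\text{ smooth}\}$.

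Part (4) is where the real obstacle lies. By definition of $\gamma$-convergence we only have, for each fixed $f$, that $\RR_{\mu_n}(f)\to\RR_\mu(f)$ weakly in $H^1_0(D)$ and hence strongly in $L^2(D)$; pointwise convergence of a collectively compact family need \emph{not} be uniform, so this must be upgraded to convergence in the operator norm of $\LL(L^2(D))$. I would exploit the variational nature of the resolvents: testing the weak formulation with $v=u=\RR_{\mu_n}f$ and using $\int_D u^2\,d\mu_n\ge0$ with Poincar\'e gives the uniform bound $\|\RR_{\mu_n}f\|_{H^1_0(D)}\le C\|f\|_{L^2(D)}$. The key strengthened claim is that $f_n\rightharpoonup f$ weakly in $L^2(D)$ already forces $\RR_{\mu_n}f_n\to\RR_\mu f$ strongly in $L^2(D)$: indeed $u_n=\RR_{\mu_n}f_n$ minimizes $F_{\mu_n}(u)-2\int_D f_n u\,dx$, the linear perturbations converge continuously (strong $L^2$ convergence of $u_n$ paired with weak $L^2$ convergence of $f_n$), continuously convergent perturbations are stable under $\Gamma$-convergence just as the continuous ones in Theorem \ref{tgamma}(5), and then equicoercivity together with Theorem \ref{tgamma}(3) and uniqueness of the limit minimizer force $u_n\to\RR_\mu f$. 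Granting this, operator-norm convergence follows by contradiction: near-maximizers $f_n$ in the unit ball admit a weak limit $f$, and compactness of $\RR_\mu$ (turning $f_n\rightharpoonup f$ into $\RR_\mu f_n\to\RR_\mu f$) clashes with the strengthened convergence. Finally, the operators being compact, self-adjoint and positive, norm convergence yields componentwise convergence of their eigenvalues $\rho_k$ by the standard stability of the spectrum of self-adjoint operators; since the eigenvalues of $-\Delta+\mu$ on $X_\mu(D)$ are the reciprocals $1/\rho_k$, the asserted convergence of the Schr\"odinger eigenvalues follows.
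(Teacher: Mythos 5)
Your proposal cannot be checked against a proof in the paper, because the paper gives none: Proposition \ref{pmeas} is stated as a compendium of known facts, with the compactification in items (1)--(2) attributed to Dal Maso and Mosco \cite{dmmo87} and the remaining items referred to \cite{bubu05,chdm92}. Measured against that, your reconstruction is genuinely more self-contained, and its core is sound. Part (4) is where you add real content: you correctly identify that the definition of $\gamma$-convergence only gives strong $L^2(D)$ convergence of $\RR_{\mu_n}(f)$ for each \emph{fixed} $f$, which does not by itself yield operator-norm convergence, and your upgrade --- joint convergence $\RR_{\mu_n}(f_n)\to\RR_\mu(f)$ when $f_n\rightharpoonup f$, proved by perturbing the $\Gamma$-converging quadratic forms by continuously convergent linear terms, followed by the contradiction argument using compactness of $\RR_\mu$ --- is a correct route to norm resolvent convergence, after which componentwise convergence of eigenvalues follows from the min-max inequality $|\rho_k(A)-\rho_k(B)|\le\|A-B\|$ for compact self-adjoint operators, and the Schr\"odinger eigenvalues are the reciprocals $1/\rho_k$. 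Part (1) is also fine, with the honest caveat (which you state) that closedness of the family $\{F_\mu\}$ under $\Gamma$-limits is precisely the Dal Maso--Mosco theorem, so at that point your proof and the paper's citation coincide; note only that membership of $F_\mu$ in $\F_\Psi(L^2(D))$ also requires the (true, but not free) $L^2$-lower semicontinuity of $F_\mu$.

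The one step that would fail as written is in part (3): it is \emph{not} true for a general quasi-open (or even open) $\Omega$ that the penalizations $k\,\mathbf{1}_{D\setminus\Omega}\,dx$ $\gamma$-converge to $\infty_{D\setminus\Omega}$. The $\Gamma$-limit of this increasing sequence is the relaxation of the pointwise supremum, namely the Dirichlet energy restricted to $\{u\in H^1_0(D):\ u=0\ \hbox{a.e. on }D\setminus\Omega\}$, and the a.e. constraint is strictly weaker than the q.e. constraint defining $H^1_0(\Omega)$ whenever $D\setminus\Omega$ has a portion of positive capacity but zero Lebesgue measure. Concretely, if $\Omega$ is a disk minus a diametral segment, the penalized problems do not see the segment at all, and the limit is the full disk, not $\Omega$. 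The repair is available inside your own scheme: apply the penalization only to \emph{smooth} domains, for which vanishing a.e. on the complement is equivalent to membership in $H^1_0(\Omega)$; this suffices because, by your part (2), smooth domains are already $\gamma$-dense, and a diagonal argument in the compact metric space $\big(\M_0(D),d_\gamma\big)$ then yields density of the smooth densities $a(x)\,dx$. Relatedly, your part (2) sketch (smooth superlevel sets of a mollification of $u$) is plausible but hides nontrivial work: one needs a capacitary Chebyshev estimate to control $\Cp\big(\{u_\eps>t\}\symd\{u>t\}\big)$, the stability of $\gamma$-convergence under convergence in capacity, and a monotonicity/sandwich argument in $t$; this is exactly the point where the references the paper cites do the heavy lifting.
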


\begin{rema}\label{rplap}
Even if in this paper we limit ourselves to consider only the case of the Laplace operator, the same construction of relaxed domains can be performed for the $p$-Laplacian $-\Delta_p$ in $W^{1,p}_0(D)$ for $1<p\le d$, and for more general classes of nonlinear operators (see \cite{dmmu97}). Given $f\in L^p(D)$ and a sequence $(\Omega_n)$ of $p$-quasi-open subsets of $D$ (i.e. defined in a similar way by means of the $p$-capacity) we denote by $u_n$ the solution of the following equation on $\Omega_n$:
$$-\Delta_p u_n=f\hbox{ in }\Omega_n,\qquad u_n\in W^{1,p}_0(\Omega_n),$$
which has to be understood in the weak sense
$$u_n\in W^{1,p}_0(\Omega_n),\qquad
\int_{\Omega_n}|\nabla u_n|^{p-2}\nabla u_n\nabla v\,dx=\int_{\Omega_n}fv\,dx\quad\forall v\in W^{1,p}_0(\Omega_n).$$
A suitable subsequence (still denoted by the same indices) of $(u_n)$ weakly converges in $W^{1,p}_0(D)$ to the solution of the equation
$$-\Delta_p u+ \mu |u|^{p-2}u=f,\qquad u\in W^{1,p}_0(D)\cap L^p_\mu(D),$$
$\mu$ being the Borel measure defined by 
$$\mu (A)=\left\{\begin{array}{ll}
\ds{\int_A\frac{d\nu}{w^{p-1}}}&\mbox{if }\Cp_p(A\cap\{w=0\})=0\\
+\infty&\mbox{if }\Cp_p(A\cap\{w=0\})>0.
\end{array}\right.$$
Here $w$ is the weak limit in $W^{1,p}_0(D)$ of the solutions above with $f=1$ and $\nu=1+\Delta_p w$.
\end{rema}

\subsection{Geometrical restrictions}\label{ssgeom}

As we have seen, the $\gamma$-convergence makes the space $\M_0(D)$ a compact metric space, while the subspace of shapes $\A(D)$ is not compact (it is even dense in $\M_0(D)$). However, if we consider smaller classes than the whole $\A(D)$, introducing geometrical constraints on the class of admissible shapes, we can provide some extra compactness properties. We refer to \cite{bubu05} for all details concerning the restricted classes below.

\bigskip$\bullet$ A very strong geometrical constraint is convexity: the class
$$\A_{convex}\hbox{ of all convex open subsets of }D$$
is compact with respect to many kinds of convergences, among which the $\gamma$-convergence.

\bigskip$\bullet$ A weaker geometrical constraint is given by the so-called {\it uniform exterior cone condition}: an open set $\Omega$ satisfies the uniform exterior cone condition if for every point $x_0\in\partial\Omega$ there exists a closed cone, with uniform height and opening, and with vertex in $x_0$, contained in the complement of $\Omega$. The class
$$\A_{unif\;cone}\hbox{ of all open subsets of $D$ with the uniform exterior cone condition}$$
is still compact for the $\gamma$-convergence.

\bigskip$\bullet$ The class $\A_{unif\;flat\;cone}$ is the class of domains satisfying a uniform flat cone condition (see \cite{buzo95}), i.e. as above, but with the weaker requirement that the cone may be flat, that is of dimension $d-1$. The class $\A_{unif\;flat\;cone}$ is larger than the previous ones and is still compact for the $\gamma$-convergence.

\bigskip$\bullet$ The previous geometrical constraints can be further weakened by considering the classes:

\medskip
$\A_{cap\;density}$ of domains $\Omega$ satisfying a uniform capacitary density condition, i.e. such that there exist $c,r>0$ with
$$\frac{\cp\big(B_t(x)\setminus\Omega,B_{2t}(x)\big)}
{\cp\big(B_t(x),B_{2t}(x)\big)}
\ge c,\qquad\forall t\in(0,r),\ \forall x\in\partial\Omega$$
where $B_s(x)$ denotes the ball of radius $s$ centered at $x$.

\medskip
$\A_{unif\;Wiener}$ of domains $\Omega$ satisfying a uniform Wiener condition, i.e. such that for every point $x\in\partial\Omega$
$$\int_r^R\frac{\cp\big(B_t(x)\setminus\Omega,B_{2t}(x)\big)}
{\cp\big(B_t(x),B_{2t}(x)\big)}\frac{dt}{t}
\ge g(r,R,x)\qquad\forall0<r<R<1$$
where $g:(0,1)\times(0,1)\times D\to\R^+$ is a fixed function such that for every $R\in(0,1)$
$$\lim_{r\to0}g(r,R,x)=+\infty\hbox{ locally uniformly  on }x.$$ 

For the classes above the following inclusions can be established:
$$\A_{convex}\subset\A_{unif\;cone}\subset\A_{unif\;flat\;cone}\subset\A_{cap\;density}\subset\A_{unif\;Wiener}$$
and on each of the classes the $\gamma$-convergence is equivalent to the Hausdorff complementary convergence $\HH^c$ (i.e. the Hausdorff convergence of the sets $D\setminus\Omega_n$) which is then compact.

\bigskip$\bullet$ Another interesting class, which is only of topological type and is not contained in any of the previous ones, was found by \v{S}ver\'ak in \cite{sve93}; it is concerned only with the case $d=2$ and consists of all open subsets $\Omega$ of $D$ for which the number of ``holes'', i.e. connected components of $\overline D\setminus\Omega$, is uniformly bounded. More precisely, the classes
$${\mathcal O}_m(D)=\big\{\Omega\subset D,\ \Omega\hbox{ open, $\#$ connected components of }\overline D\setminus\Omega\le m\big\}$$
are such that on them the $\gamma$-convergence is equivalent to the Hausdorff complementary convergence $\HH^c$, which is then compact. For higher dimensions $d\ge3$ the same result is no more true but it can be recovered again if the Laplace operator is replaced by the $p$-Laplace operator, with $p\in]d-1,d]$ (see \cite{butr98}). For $p>d$ the compactness result is trivial, due to the fact that points have a strictly positive $p$-capacity and that the Sobolev embedding theorem gives uniform continuity of Sobolev maps.


\section{Optimization problems for integral functionals}\label{sinteg}

\subsection{Nonexistence of optimal domains}\label{ssnonex}

We present an explicit example of optimization problem for an integral functional where the existence of an optimal domain does not occur. We fix a bounded open subset $D$ in $\R^d$ ($d\ge2$), a function $f\in L^2(D)$, and we consider the minimization problem
\be\label{e31}
\min\Big\{J(\Omega)=\int_Dj\big(x,u_\Omega(x)\big)\,dx\ :\ \Omega\in\A(D)\Big\}
\ee
where $u_\Omega$ is the solution of the Dirichlet problem in $\Omega$
\be\label{esteq}
-\Delta u=f\hbox{ in }\Omega,\qquad u\in H^1_0(\Omega),
\ee
extended by zero on $D\setminus\Omega$.

For instance, taking $j(x,s)=|s-u_0(x)|^2$, the problem amounts to find a state function $u$ as close as possible, in the $L^2(D)$ norm, to the desired state $u_0$, only acting on the shape of the Dirichlet region $D\setminus\Omega$, which can then be seen as a control variable, with \eqref{esteq} as the corresponding state equation. Considering the state function $u$ as the temperature of a conducting medium $D$ with prescribed heat sources $f$ in a stationary configuration, the optimization problem \eqref{e31} consists in finding an optimal distribution of the Dirichlet region $D\setminus\Omega$ in order to achieve a temperature as close as possible to the desired temperature $u_0$.

We shall see that for a large class of integrands $j(x,s)$ an optimal solution does not exist; the reason is that a Dirichlet region $D\setminus\Omega$ composed of many small pieces is more efficient, in terms of the given cost, than a domain composed by a single piece. Therefore a minimizing sequence $(\Omega_n)$ of domains is such that $D\setminus\Omega_n$ splits more and more, then leading to the nonexistence of the optimum.

We argue by contradiction and assume that the optimization problem \eqref{e31} admits a solution $\Omega$ that does not coincide with the entire set $D$; assume also for simplicity some mild regularity on $\Omega$, as that $\Omega$ is an open set whose boundary has zero Lebesgue measure and whose closure $\overline\Omega$ does not fill all the set $D$. All these extra assumptions could be actually removed (see \cite{budm90,budm91,chdm92}) only requiring that $\Omega$ is a quasi-open set. For every point $x_0\in D\setminus\overline\Omega$ we perform the so-called {\it topological derivative} on the cost functional, which consists in adding to $\Omega$ a small ball $B_\eps$ of radius $\eps$ centered at $x_0$, then obtaining a competing domain $\Omega_\eps=\Omega\cup B_\eps$. Since for small $\eps$ the ball $B_\eps$ is disjoint from $\Omega$ we obtain
$$J(\Omega_\eps)=\int_\Omega j\big(x,u_\Omega(x)\big)\,dx+\int_{B_\eps}j\big(x,u_\eps(x)\big)\,dx+\int_{D\setminus\Omega_\eps}j(x,0)\,dx$$
where we denoted by $u_\eps$ the solution of
$$-\Delta u=f\hbox{ in }B_\eps,\qquad u\in H^1_0(B_\eps).$$
The same computation for $J(\Omega)$ gives
$$J(\Omega)=\int_\Omega j\big(x,u_\Omega(x)\big)\,dx+\int_{B_\eps}j(x,0)\,dx+\int_{D\setminus\Omega_\eps}j(x,0)\,dx,$$
so that
$$J(\Omega_\eps)-J(\Omega)=\int_{B_\eps}j\big(x,u_\eps(x)\big)-j(x,0)\,dx.$$

Assuming that $j(x,s)$ is continuous in $x$ and continuously differentiable in $s$, and that the source $f$ is continuous (these conditions can be easily weakened), we obtain
$$u_\eps(x)=f(x_0)\frac{\eps^2-|x-x_0|^2}{2d}+o(\eps^2)\qquad\forall x\in B_\eps$$
so that
$$\begin{array}{lll}
j\big(x,u_\eps(x)\big)&=&j(x,0)+u_\eps(x)j_s(x,0)+o(\eps^2)\\
&=&j(x,0)+f(x_0)j_s(x,0)\big(\eps^2-|x-x_0|^2\big)/(2d)+o(\eps^2)\\
&=&\ds j(x,0)+f(x_0)j_s(x_0,0)\big(\eps^2-|x-x_0|^2\big)/(2d)+o(\eps^2).
\end{array}$$
Therefore
$$J(\Omega_\eps)-J(\Omega)=\frac{f(x_0)j_s(x_0,0)}{2d}\int_{B_\eps}\big(\eps^2-|x-x_0|^2\big)\,dx+o(\eps^{d+2})$$
and, using the optimality of $\Omega$ we end up with the necessary condition of optimality:
\be\label{e32}
f(x_0)j_s(x_0,0)\ge0\qquad\forall x_0\in D\setminus\Omega.
\ee
For instance, if $j(x,s)=|s-u_0(x)|^2$ and $f(x)>0$ on $D$, the necessary condition above becomes
$$u_0(x)\le0\qquad\hbox{on }D\setminus\Omega.$$
Taking the desired state $u_0(x)>0$ on $D$, we deduce that the only possibility for a domain $\Omega$ to be optimal is that $\Omega=D$. However, also this possibility can be excluded if the function $u_0$ is small enough; in fact, in this case the empty set gives a better value of the cost functional, since for small $u_0$ we have
$$\int_D|u_0|^2\,dx=J(\emptyset)<J(D)=\int_D|u_D-u_0|^2\,dx.$$

\subsection{Necessary conditions of optimality}\label{ssnece}

Following the argument illustrated in the previous section we can obtain the result below (see \cite{budm90,budm91} for a more detailed proof).

\begin{teor}\label{neccond}
Let $f\in L^2(D)$ and assume $j:D\times\R\to\R$ satisfies the conditions:
\begin{enumerate}
\item$j(x,s)$ is measurable in $x$ and continuous in $s$;

\item$|j(x,s)|\le a(x)+b|s|^2$ for suitable $a\in L^1(D)$ and $b\in\R$;

\item$j(x,\cdot)$ is continuously differentiable and $|j_s(x,s)|\le a_1(x)+b_1|s|$ for suitable $a_1\in L^2(\Omega)$ and $b_1\in\R$.
\end{enumerate}
Then, if a smooth domain $\Omega$ solves the shape optimization problem \eqref{e31}, then the following necessary condition of optimality holds:
\be\label{enecc}
f(x)j_s(x,0)\ge0\qquad\hbox{a.e. on }D\setminus\Omega.
\ee
\end{teor}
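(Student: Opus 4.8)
The plan is to make rigorous the heuristic topological-derivative computation of Section~\ref{ssnonex}, the only essential change being that the pointwise expansion of $u_\eps$ must be replaced by a rescaling argument adapted to $f\in L^2$ and to the merely Carathéodory nature of $j$. Since $\Omega$ is smooth, $D\setminus\overline\Omega$ is open and $\partial\Omega$ is Lebesgue-negligible, so it suffices to establish $f(x_0)j_s(x_0,0)\ge0$ for a.e.\ $x_0\in D\setminus\overline\Omega$; I would further restrict $x_0$ to be a common $L^2$-Lebesgue point of $f$, of $j_s(\cdot,0)$ and of $a_1$ (almost every point qualifies, as all three lie in $L^2_{loc}$). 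For $\eps$ small enough that $B_\eps:=B(x_0,\eps)\subset D\setminus\overline\Omega$, put $\Omega_\eps=\Omega\cup B_\eps\in\A(D)$. Because $B_\eps$ and $\Omega$ are disjoint open sets, $H^1_0(\Omega_\eps)$ splits as the orthogonal sum $H^1_0(\Omega)\oplus H^1_0(B_\eps)$ (Section~\ref{sscapa}), so the state decouples as $u_{\Omega_\eps}=u_\Omega+u_\eps$ with $u_\eps$ the solution of $-\Delta u=f$ in $B_\eps$. Using growth condition~(2) to guarantee integrability, this yields exactly
\[
J(\Omega_\eps)-J(\Omega)=\int_{B_\eps}\big[j(x,u_\eps(x))-j(x,0)\big]\,dx.
\]

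Next I would extract the leading order by rescaling. Writing $x=x_0+\eps z$ and $u_\eps(x)=\eps^2 w_\eps(z)$, the function $w_\eps\in H^1_0(B_1)$ solves $-\Delta w_\eps=f(x_0+\eps z)$; by the $L^2$-Lebesgue property $f(x_0+\eps\,\cdot)\to f(x_0)$ strongly in $L^2(B_1)$, hence $w_\eps\to w_0$ strongly in $H^1_0(B_1)$ with $w_0(z)=f(x_0)(1-|z|^2)/(2d)$. Splitting the integrand through $j(x,u_\eps)-j(x,0)=j_s(x,0)u_\eps+u_\eps\int_0^1[j_s(x,tu_\eps)-j_s(x,0)]\,dt$ and rescaling, the principal term becomes $\eps^{d+2}\int_{B_1}j_s(x_0+\eps z,0)\,w_\eps(z)\,dz$; since $j_s(x_0+\eps\,\cdot,0)\to j_s(x_0,0)$ in $L^2(B_1)$ and $w_\eps\to w_0$ in $L^2(B_1)$, the product passes to the limit and gives $C_d\,f(x_0)\,j_s(x_0,0)\,\eps^{d+2}$ with $C_d=\int_{B_1}(1-|z|^2)/(2d)\,dz>0$. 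The conclusion follows from optimality: one has $J(\Omega_\eps)\ge J(\Omega)$, so dividing by $\eps^{d+2}>0$ and letting $\eps\to0$ yields $C_d f(x_0)j_s(x_0,0)\ge0$, that is \eqref{enecc}.

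The main obstacle is to show that the remainder $\int_{B_1}|w_\eps|\int_0^1|j_s(x_0+\eps z,t\eps^2w_\eps)-j_s(x_0+\eps z,0)|\,dt\,dz$ tends to $0$, so that it is genuinely $o(\eps^{d+2})$. The difficulty is that the base point $x_0+\eps z$ moves with $\eps$, while $j_s$ is only measurable in $x$, so one cannot invoke continuity in $x$; continuity in $s$ alone controls the integrand pointwise only for a frozen first argument, and in dimension $d\ge4$ one has no $L^\infty$ bound on $u_\eps$ to fall back on. I would resolve this by a Scorza--Dragoni argument: for each $\delta>0$ choose a compact $K\subset D$ with $|D\setminus K|<\delta$ on which $j_s$ is jointly continuous, treat the part of the integral over $\{x_0+\eps z\in K\}$ via joint uniform continuity together with $\eps^2 w_\eps\to0$, and bound the contribution of $\{x_0+\eps z\notin K\}$ using the growth bound $|j_s(x,s)|\le a_1(x)+b_1|s|$, the boundedness of $\|w_\eps\|_{L^2(B_1)}$ and the equi-integrability of $a_1(x_0+\eps\,\cdot)^2$; letting first $\eps\to0$ and then $\delta\to0$ gives the claim. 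This Carathéodory/equi-integrability step, uniform in the dimension $d\ge2$, is the technical heart of the argument, whereas the geometric construction and the identification of the positive leading coefficient $C_d$ are routine.
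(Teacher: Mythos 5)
Your overall route is exactly the paper's: Theorem \ref{neccond} is proved there by the topological-derivative argument of Section \ref{ssnonex} (add a disjoint small ball $B_\eps$ at $x_0\in D\setminus\overline\Omega$, decouple the state, expand the cost, invoke optimality), with the Carath\'eodory-level details delegated to \cite{budm90,budm91}. The parts of that argument you make rigorous are done correctly: the identity $J(\Omega_\eps)-J(\Omega)=\int_{B_\eps}[j(x,u_\eps)-j(x,0)]\,dx$, the blow-up $u_\eps(x_0+\eps z)=\eps^2w_\eps(z)$ with $w_\eps\to w_0$ in $H^1_0(B_1)$ at $L^2$-Lebesgue points of $f$, and the convergence of the principal term to $C_d\,f(x_0)\,j_s(x_0,0)\,\eps^{d+2}$ at Lebesgue points of $j_s(\cdot,0)$ are all sound and genuinely upgrade the paper's sketch, which assumes $f$ and $j(\cdot,s)$ continuous.

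There is, however, a genuine gap in your treatment of the remainder, precisely the step you call the technical heart. Scorza--Dragoni gives a compact $K\subset D$ with $|D\setminus K|<\delta$ and $j_s$ jointly continuous on $K\times\R$, but after blowing up at scale $\eps$ around the \emph{previously fixed} point $x_0$, the bad set $\{z\in B_1:\ x_0+\eps z\notin K\}$ has measure $\eps^{-d}|B_\eps\setminus K|$, and nothing in your construction makes this small: $K$ is chosen after $x_0$, so nothing forces $x_0\in K$, and since $D\setminus K$ is open, if $x_0\notin K$ then $B_\eps\subset D\setminus K$ for all small $\eps$, i.e.\ the bad set is \emph{all} of $B_1$. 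Equi-integrability of $a_1(x_0+\eps\,\cdot)^2$ is then of no use (it only bites on sets of small measure), and your order of limits cannot repair this: for fixed $\delta$, the $\limsup_{\eps\to0}$ of the bad-set contribution is bounded only by quantities such as $2\sup_\eps\|w_\eps\|_{L^2(B_1)}\,\limsup_\eps\|a_1(x_0+\eps\,\cdot)\|_{L^2(B_1)}$, which do not depend on $\delta$, so letting $\delta\to0$ afterwards gains nothing. The fix is to strengthen the (still almost-everywhere) selection of $x_0$: fix once and for all Scorza--Dragoni compacts $K_n$ with $|D\setminus K_n|<1/n$, and require in addition that $x_0$ be a point of Lebesgue density one of some $K_n$; almost every $x_0\in D$ has this property. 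Then, working with that fixed $K_n$, the bad set has measure tending to $0$ as $\eps\to0$ and your growth-bound/equi-integrability estimate closes the argument (no limit in $\delta$ is needed at all). A second, smaller imprecision in the same step: joint continuity on $K_n\times\R$ does not yield uniform continuity, since the $s$-variable ranges over all of $\R$, and $\eps^2w_\eps\to0$ only in $L^2(B_1)$, not uniformly; so on the good set you must additionally split according to $|w_\eps|\le\eps^{-1}$ (where $|t\eps^2w_\eps|\le\eps$ and uniform continuity on the compact $K_n\times[-1,1]$ applies) and $|w_\eps|>\eps^{-1}$, a set of measure $O(\eps^2)$ by Chebyshev, handled once more by the growth bound together with equi-integrability. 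With these two corrections your proof is complete and is, in substance, the rigorous version of the paper's own argument.
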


\begin{rema}
When the optimal set $\Omega$ is not smooth but only quasi-open, a necessary condition of optimality of the form of \eqref{enecc} has to be given in terms of the fine topology. We recall that the fine topology on $D$ is the weakest topology on $D$ for which all superharmonic functions are continuous (see for instance \cite{doob} for a detailed study of properties of the fine topology); by $\partial^*\Omega$ and $cl^*\Omega$ we denote respectively the fine boundary and the fine closure of $\Omega$ in $D$. Under the same conditions on $f$ and $j$ as in Theorem \ref{neccond} the topological derivative argument gives (see \cite{budm90,budm91}) that an optimal quasi-open set $\Omega$ must fulfill the necessary condition of optimality:
$$f(x)j_s(x,0)\ge0\quad\hbox{a.e. on }D\setminus cl^*\Omega.$$
\end{rema}

Other kinds of necessary conditions of optimality can be obtained for the optimization problem \eqref{e31}. Assuming again that a smooth optimal domain $\Omega_{opt}$ exists, it is convenient to introduce the adjoint state equation, that for any admissible $\Omega$ reads
\be\label{e435}
-\Delta v=j_s(x,u_\Omega)\hbox{ in }\Omega, \qquad v\in H^1_0(\Omega).
\ee
In \cite{budm91} the following optimality conditions have been obtained:
\be\label{eoptcond}
\left\{\begin{array}{ll}
uv\le0&\hbox{q.e. on }\Omega_{opt};\\
f(x)j_s(x,0)\ge0&\hbox{a.e. on }D\setminus\Omega_{opt};\\
(\partial u/\partial n)(\partial v/\partial n)=0
&\HH^{N-1}\hbox{-a.e. on }D\cap\partial\Omega_{opt}.
\end{array}\right.
\end{equation}

\subsection{Relaxed Dirichlet problems}\label{ssrelax}

We have seen that in general the minimization problem \eqref{e31} may have no solution and its relaxed formulation on $\M_0(D)$
\be\label{e31mu}
\min\Big\{J(\mu)=\int_Dj\big(x,u_\mu(x)\big)\,dx\ :\ \mu\in\M_0(D)\Big\},
\ee
where $u_\mu$ is the solution of the relaxed Dirichlet problem
$$-\Delta u+\mu u=f\hbox{ in }D,\qquad u\in X_\mu(D),$$
has to be considered, in order to obtain a relaxed solution, which is a measure of the class $\M_0(D)$. The previous analysis leading to necessary conditions of optimality can be performed for relaxed solutions too, considering the adjoint relaxed state equation
\be\label{e435mu}
-\Delta v+\mu v=j_s(x,u_\mu)\hbox{ in }D, \qquad v\in X_\mu(D).
\ee
Let now $\mu_{opt}$ an optimal relaxed solution in the space $\M_0(D)$ (which always exists thanks to the compactness of the $\gamma$-convergence in $\M_0(D)$). The optimality conditions \eqref{eoptcond} can be written for a general $\mu_{opt}$ as
\be\label{eoptcondmu}
\left\{\begin{array}{ll}
uv\le0&\hbox{q.e. in }D;\\
uv=0&\mu\hbox{-a.e. in }D;\\
f(x)j_s(x,0)\ge0&\hbox{a.e. on }\hbox{int}^*\{u_{\mu_{opt}}=0\},
\end{array}\right.
\end{equation}
where int* denotes the interior with respect to the fine topology. The last condition in \eqref{eoptcond}, involving normal derivatives, has a more involved translation in terms of capacitary measures, and we refer to \cite{budm91,bubu05}.

Consider now again the problem of finding the temperature closest to the desired one $u_0$, acting on the Dirichlet region $D\setminus\Omega$. We have seen that when $u_0$ is small enough and the heat source $f$ is positive, no optimal domain exists and the problem has to be written in the relaxed formulation
$$\min\Big\{\int_D|u_\mu-u_0(x)|^2\,dx\ :\ \mu\in\M_0(D)\Big\}$$
in order to obtain a solution in $\M_0(D)$. Thanks to the fact that $f>0$ and to the identification seen in Example \ref{x29} (iii), the relaxed formulation above can be rewritten as
$$\min\Big\{\int_D|u-u_0(x)|^2\,dx\ :\ u\in H^1_0(D),\ u\ge0,\ \Delta u+f\ge0\Big\}$$
which is now a strictly convex problem on the closed convex subset $\{u\in H^1_0(D),\ u\ge0,\ \Delta u+f\ge0\}$ of the Sobolev space $H^1_0(D)$. Therefore there is a unique solution $u$ from which we may deduce the optimal measure $\mu$ through the formula
$$\mu=\frac{f+\Delta u}{u}\;.$$
In particular, if $D$ is a ball and $u_0,f$ are radially symmetric, the optimal solution $u$ and so the optimal measure $\mu$ are radially symmetric too and an explicit computation can be made in polar coordinates. Let us consider the particular case when $d=2$, $D$ is the unit disk, $f\equiv1$, and $u_0\equiv c$ is a constant. After some computations we find that for small values of the constant $c$ the optimal measure $\mu$ has the form
$$\mu=\frac{1}{c}\HH^2\lfloor B_{R_c}+K_c\HH^1\lfloor\partial B_{R_c}$$
where the values of $R_c$ and of $K_c$ can be computed explicitely by one-dimensional optimization problems. Note that $u=c$ in $B_{R_c}$. A plot of the corresponding optimal temperature $u(r)$ is given below.

\begin{figure}[h]
\centerline{\includegraphics[width=10.0cm]{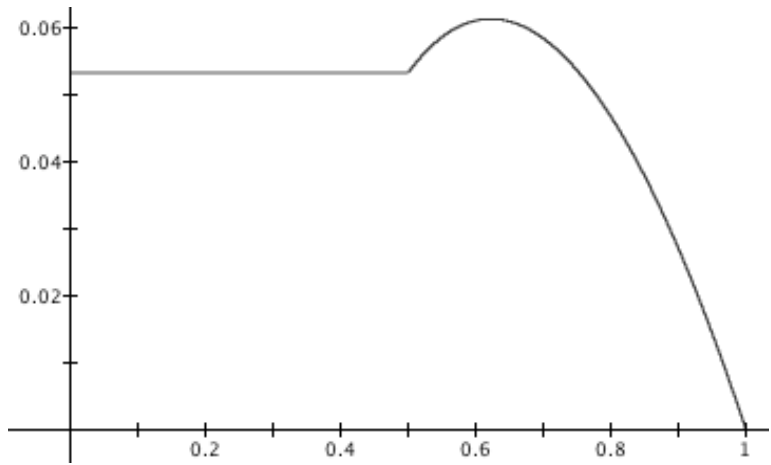}}
\caption{Plot of the optimal temperature $u(r)$ for $c$ small.}\label{fnuclear}
\end{figure}


\section{Spectral optimization problems}\label{sspec}

\subsection{A general existence result}\label{ssexis}

We have seen that in general for a shape optimization problem of the form \eqref{shopt}, either we consider on the admissible class $\A$ some additional geometrical constraints of the kind listed in Section \ref{ssgeom}, or in general the problem does not admit any optimal domain solution and the problem has to be considered in the relaxed class of capacitary measures $\M_0(D)$. However, for some particular type of cost functionals, the existence of optimal domains can be obtained (in this case we say that a {\it classical solution} exists) without the addition of any geometrical constraint but under the only volume constraint $|\Omega|\le m$. In this section we show some interesting classes of cost functionals for which the existence of classical solutions holds.

The problems we consider have the general form
\be\label{eshpb}
\min\big\{F(\Omega)\ :\ \Omega\in\A(D),\ |\Omega|\le m\big\}
\ee
where the admissible class $\A(D)$ is as before the class of quasi-open subsets of $D$. A general existence result for optimal domains has been obtained in \cite{budm93} under the following conditions on the cost functional $F$.

\begin{teor}\label{exth}
Let $F$ be a $\gamma$-lower semicontinuous functional which is decreasing with respect to the set inclusion. Then the optimization problem \eqref{eshpb} admits at least one classical solution, i.e. a domain $\Omega\in\A(D)$.
\end{teor}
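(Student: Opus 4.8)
The plan is to run the direct method in the compactified space $\M_0(D)$ and then exploit the monotonicity of $F$ to replace the resulting relaxed minimizer by a genuine quasi-open set without increasing the cost. Since $F$ is decreasing and every admissible set satisfies $\Omega\subseteq D$, one has $F(\Omega)\ge F(D)$, so the infimum $\ell$ in \eqref{eshpb} is finite (the case $\ell=+\infty$ being trivial). I would then take a minimizing sequence $(\Omega_n)$ with $|\Omega_n|\le m$ and $F(\Omega_n)\to\ell$. By the compactness of the $\gamma$-convergence on $\M_0(D)$ (Proposition \ref{pmeas}(1)), after passing to a subsequence there is $\mu\in\M_0(D)$ with $\Omega_n\to\mu$; setting $w=\RR_\mu(1)$ we get $w_n:=\RR_{\Omega_n}(1)\to w$ in $L^2(D)$ and, along a further subsequence, a.e. The natural candidate for the optimal domain is the quasi-open set $\Omega^*=\{w>0\}$.

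Next I would verify admissibility of $\Omega^*$ together with a domination property. Since $w_n=0$ q.e. outside $\Omega_n$, the a.e. convergence yields $\{w>0\}\subseteq\liminf_n\{w_n>0\}$ up to a Lebesgue-null set, so Fatou gives $|\Omega^*|\le\liminf_n|\Omega_n|\le m$ and $\Omega^*$ is admissible. For the domination I invoke the identification of Example \ref{x29}(iii): as the datum $f\equiv1$ is positive, $\mu=(1+\Delta w)/w$ with $\mu=+\infty$ on $\{w=0\}$, that is $\mu\ge\infty_{D\setminus\Omega^*}$; equivalently $X_\mu(D)\subseteq H^1_0(\Omega^*)$ and $\RR_\mu\le\RR_{\Omega^*}$.

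The conclusion then rests on monotonicity. I would compare each $\Omega_n$ with the enlarged set $\Omega_n\cup\Omega^*$. On one hand $\Omega_n\subseteq\Omega_n\cup\Omega^*$, so the monotonicity of $F$ gives $F(\Omega_n\cup\Omega^*)\le F(\Omega_n)$. On the other hand the key lemma is that $\Omega_n\cup\Omega^*\to\Omega^*$ in the $\gamma$-convergence: intuitively $\Omega^*$ already absorbs the whole $\gamma$-limit because $\mu\ge\infty_{D\setminus\Omega^*}$, so adjoining it to each $\Omega_n$ suppresses the holes that were surviving in the limit. Granting this, the $\gamma$-lower semicontinuity of $F$ yields
$$F(\Omega^*)\le\liminf_n F(\Omega_n\cup\Omega^*)\le\liminf_n F(\Omega_n)=\ell,$$
and since $\Omega^*$ is admissible it is a classical solution.

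The hard part is precisely the absorption lemma $\Omega_n\cup\Omega^*\to\Omega^*$. By compactness one may assume $\Omega_n\cup\Omega^*\to\sigma$ in $\gamma$; the inclusions $\Omega_n,\Omega^*\subseteq\Omega_n\cup\Omega^*$ together with the monotonicity of $\gamma$-limits under inclusion give $\RR_\sigma\ge\RR_{\Omega^*}$ and $\RR_\sigma\ge\RR_\mu$, i.e. $\sigma\le\infty_{D\setminus\Omega^*}$. The nontrivial inequality is the reverse $\sigma\ge\infty_{D\setminus\Omega^*}$, namely that $\RR_\sigma(1)$ still vanishes q.e. on $\{w=0\}$; this is where the specific structure of $\mu$ as the $\gamma$-limit of the very sequence $(\Omega_n)$ must be used, and I would extract it from the resolvent estimates in \cite{budm93}. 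Everything else in the argument is routine.
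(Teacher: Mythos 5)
Your overall architecture --- direct method in the compact space $\M_0(D)$, candidate domain $\Omega^*=\{w>0\}$ with $w=\RR_\mu(1)$, Fatou for the volume constraint, then monotonicity plus $\gamma$-lower semicontinuity applied to enlarged sets --- is the same as the paper's, and your admissibility step and the domination $\mu\ge\infty_{D\setminus\Omega^*}$ are both fine. The genuine gap is the absorption lemma: the claim that $\Omega_n\cup\Omega^*\to\Omega^*$ in the $\gamma$-convergence is \emph{false} in general, and no resolvent estimate can repair it. The reason is that the obstacles $D\setminus\Omega_n$ responsible for strangling $\{w=0\}$ in the limit may themselves lie inside $\Omega^*=\{w>0\}$, approaching $\{w=0\}$ only asymptotically; taking the union with $\Omega^*$ then erases them. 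Concretely, take $D=B_2$ and $\Omega_n=D\setminus\partial B_{1+1/n}$ (remove the sphere of radius $1+1/n$). Then $\RR_{\Omega_n}(1)\to\RR_{D\setminus\partial B_1}(1)$ in $L^2(D)$, so $\Omega_n\to\mu=\infty_{\partial B_1}$ in the $\gamma$-convergence and $\Omega^*=\{w>0\}=D\setminus\partial B_1$ up to a set of capacity zero; but $\partial B_{1+1/n}\cap\partial B_1=\emptyset$ gives $\Omega_n\cup\Omega^*=D$ (up to capacity zero) for every $n$, which $\gamma$-converges to $D$, not to $\Omega^*$: indeed $\RR_D(1)>\RR_{\Omega^*}(1)$ inside $B_1$ by the maximum principle. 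Note that here $\mu$ is exactly the $\gamma$-limit of the very sequence $(\Omega_n)$, and it even coincides with $\infty_{D\setminus\Omega^*}$, so the ``specific structure'' you hoped to extract from \cite{budm93} is present and still does not help; moreover, since for a constant functional $F$ every sequence is minimizing, minimality of $(\Omega_n)$ cannot rescue the claim either. The reverse inequality $\sigma\ge\infty_{D\setminus\Omega^*}$ simply fails.

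This is precisely why the paper's Lemma \ref{lgm1} is more careful: instead of adjoining all of $\Omega^*$, it adjoins the superlevel sets $\Omega^\eps=\{w_{\Omega^*}>\eps\}$, which stay capacitarily away from $\{w=0\}$, and then extracts a diagonal sequence $\eps_n\to0$ so that $G_n=\Omega_n\cup\Omega^{\eps_n}$ $\gamma$-converges to $\Omega^*$. The gap between $\Omega^\eps$ and $\Omega^*$ is what makes the cutoff $v^\eps=1-\frac{1}{\eps}\min\{w_{\Omega^*},\eps\}$ available ($v^\eps=0$ on $\Omega^\eps$, $v^\eps=1$ q.e.\ outside $\Omega^*$): truncating $w_{\Omega_n\cup\Omega^\eps}$ by $v^\eps$ produces functions lying in $H^1_0(\Omega_n)$, whose weak limits must vanish q.e.\ on $\{w=0\}$ because $(\Omega_n)$ $\gamma$-converges to $\mu$, and this is how the vanishing is transferred to the limit $w^\eps$. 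In the counterexample above this construction adjoins sets avoiding a neighborhood of $\partial B_1$, hence (for suitable $\eps_n$) nothing at all, and no spurious resurrection occurs. If you replace your absorption lemma by the paper's Lemma \ref{lgm1} --- or, equivalently, by the implication of Proposition \ref{pmonot} that $\gamma$-lower semicontinuous monotone decreasing functionals are $w\gamma$-lower semicontinuous --- the rest of your argument goes through essentially verbatim.
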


\begin{rema}
Since the $\gamma$ is a rather strong convergence (for instance it implies the convergence of the spectrum, as we have seen in Proposition \ref{pmeas}) the assumption that $F$ is $\gamma$-lower semicontinuous is very mild and is always fulfilled in spectral optimization problems. On the contrary, the monotonicity assumption on $F$ is quite severe; for instance it does not hold in the case
$$F(\Omega)=\int_D|u_\Omega-u_0(x)|^2\,dx$$
considered in Section \ref{sinteg}.
\end{rema}

\begin{rema}\label{equality}
By the monotonicity assumption on the cost functional $F$ in Theorem \ref{exth}, it is clear that the inequality constraint $|\Omega|\le m$ in \eqref{eshpb} can be replaced by the equality constraint $|\Omega|=m$.
\end{rema}

\begin{exam}\label{minint}
{\it(Optimal domains for integral functionals).} Consider a given $f\in L^2(D)$, with $f\ge0$, and let $g:D\times\R\to]-\infty,+\infty]$ be a Borel function such that $g(x,\cdot)$ is lower semicontinuous and decreasing on $\R$ for a.e. $x\in D$, and $g(x,s)\ge-a(x)-bs^2$ for a suitable $a\in L^1(D)$ and $b\in\R$. For every $\Omega\in\A(D)$ let $u_\Omega=\RR_\Omega(f)$ be the unique solution of $-\Delta u=f$ on $H^1_0(\Omega)$, and let
$$F(\Omega)=\int_D g\big(x,u_\Omega(x)\big)\,dx\,.$$
Then $F$ is lower semicontinuous with respect to the $\gamma$-convergence and, since we assumed $f\ge0$, the maximum principle and the monotonicity properties of $g$ imply that $F$ is decreasing with respect to set inclusion. Therefore, by Theorem \ref{exth} (and Remark \ref{equality}) the minimum problem
$$\min\big\{\int_D g\big(x,u_\Omega(x)\big)\,dx\ :\ \Omega\in\A(D),\
|\Omega|=m\big\}$$
admits at least a solution.
\end{exam}

\begin{exam}\label{exeigen}
{\it(Optimal domains for spectral problems).} For every $\Omega\in\A(D)$ let $\lambda_k(\Omega)$ be the $k^{th}$ eigenvalue of the Dirichlet Laplacian on $H^1_0(\Omega)$, with the convention $\lambda_k(\Omega)=+\infty$ if $\Cp(A)=0$. It is well known that all the mappings $\Omega\mapsto\lambda_k(\Omega)$ are decreasing with respect to set inclusion; moreover they are continuous with respect to the $\gamma$-convergence (see Proposition \ref{pmeas} (iv) and Remark \ref{rgamma} (iv)), so that Theorem \ref{exth} (and Remark \ref{equality}) applies and for every $k\in\N$ and $0\le m\le|D|$ we obtain that the minimization problem
$$\min\big\{\lambda_k(\Omega)\ :\ \Omega\in\A(D),\ |\Omega|=m\big\}$$
admits a (classical) solution. More generally, arguing in the same way, the minimum
$$\min\big\{\Phi\big(\lambda(\Omega)\big)\ :\ \Omega\in\A(D),\ |\Omega|=m\big\}$$
is achieved too, where $\lambda(\Omega)$ denotes the spectrum of $-\Delta$ on $H^1_0(\Omega)$, that is the sequence $\big(\lambda_k(\Omega)\big)$, and the function $\Phi:[0,+\infty]^\N\to[0,+\infty]$ is lower semicontinuous and increasing, in the sense that
$$\displaylines{\lambda_k^h\to\lambda_k\quad
\forall k\in\N\ \ \Rightarrow\ \ \Phi(\lambda)
\le\displaystyle\liminf_{h\to\infty}\Phi(\lambda^h)\,,\cr
\hskip-1.25truecm\lambda_k\le\mu_k\quad
\forall k\in\N\ \ \Rightarrow\ \ \Phi(\lambda)\le\Phi(\mu)\,.\cr}$$
\end{exam}

\begin{exam}\label{excap}
{\it(Domains with minimal capacity).} Since $\Cp(E)$ is a increasing set function, the mapping $\Omega\mapsto F(\Omega)=\Cp(D\setminus\Omega)$ is decreasing with respect to the set inclusion. It is not difficult to verify that the mapping $F$ is also $\gamma$-continuous, so that the existence Theorem \ref{exth} applies, and the minimum
$$\min\big\{F(\Omega)\ :\ \Omega\in\A(D),\ |\Omega|\le m\big\}$$
is achieved. If $\F$ denotes the class of all quasi-closed subsets of $D$, passing to complements and taking $m=|D|-k$ in the previous problem, we have that the minimum
\be\label{mincap}
\min\big\{\Cp(E)\ :\ E\in\F,\ |E|=k\big\} 
\ee
is achieved. If we denote by $E_0$ a solution to \eqref{mincap} we show that
\be\label{bdm93.2.7}
\Cp(E_0)=\min\big\{\Cp(E)\ :\ E\subset D,\ |E|=k\big\}.
\ee
In fact, for every subset $E$ of $D$ there exists a quasi-closed set $E'$ such that $E\subset E'$ and $\Cp(E)=\Cp(E')$ (see for instance Section~2 of \cite{fug72}, or Proposition~1.9 of \cite{dm88}). If $|E|=k$, then $|E'|\ge k$, so that there exists $E''\in\F$ with $E''\subset E'$ and $|E''|=k$. By
\eqref{mincap}) we then have
$$\Cp(E_0)\le\Cp(E'')\le\Cp(E')=\Cp(E),$$
which proves \eqref{bdm93.2.7}.
\end{exam}

The idea of the proof of Theorem \ref{exth} is to consider a relaxed solution, that always exists, which is a measure $\mu$ of the class $\M_0(D)$. If we denote by $\overline F$ the relaxed functional, defined on $\M_0(D)$ we then have
$$\overline F(\mu)=\inf\big\{F(\Omega)\ :\ \Omega\in\A(D),\ |\Omega|\le m\big\}.$$
We associate to the measure $\mu$ the solution $u_\mu$ of the relaxed Dirichlet problem
$$-\Delta u+\mu u=1\hbox{ in }D,\qquad u\in X_\mu(D)$$
and the domain $\Omega=\{u_\mu>0\}$ which is quasi-open, since $u_\mu\in H^1_0(D)$. The new measure $\nu=\infty_{D\setminus\Omega}$ fulfills the inequality $\nu\le\mu$ and the monotonicity assumption on the cost functional gives
$$F(\Omega)=\overline F(\nu)\le\overline F(\mu),$$
then showing the optimality of the domain $\Omega$.

\subsection{The weak $\gamma$-convergence}\label{ssweakg}

To give the proof of Theorem \ref{exth} in a more rigorous way we introduce a new convergence, much weaker than $\gamma$, that makes the class $\A(D)$ compact. We call weak $\gamma$ this new convergence and we denote it by $w\gamma$.

\begin{defi}\label{dwgamma}
We say that a sequence $(\Omega_n)$ of domains in $\A(D)$ weakly $\gamma$-converges to a domain $\Omega\in\A(D)$ if the solutions $w_{\Omega_n}=\RR_{\Omega_n}(1)$ converges weakly in $H^1_0(D)$ to a function $w\in H^1_0(D)$ (that we may take quasi-continuous) such that $\Omega=\{w>0\}$.
\end{defi}

\begin{rema}
We stress the fact that, in general, the function $w$ in Definition \ref{dwgamma} does not coincide with the solution $w_\Omega=\RR_\Omega(1)$; this happens only if $\Omega_n$ $\gamma$-converges to $\Omega$, which in general does not occur, because $\gamma$-convergence is not compact on $\A(D)$. We only have that $\Omega_n$ $\gamma$ converges to some $\mu\in\M_0(D)$, so that the function $w$ in Definition \ref{dwgamma} coincides with the solution $w_\mu=\RR_\mu(1)$ of the relaxed Dirichlet problem
$$-\Delta u+\mu u=1\hbox{ in }D,\qquad u\in X_\mu(D).$$
Also, we notice that, by its definition, the $w\gamma$-convergence is compact, since the sequence $w_{\Omega_n}=\RR_{\Omega_n}(1)$ is bounded in $H^1_0(D)$ so it always has a subsequence $(\Omega_{n_k})$ weakly converging to some function $w\in H^1_0(D)$, and the set of positivity $\Omega=\{w>0\}$ (which is quasi-open since $w\in H^1_0(D)$) is then the $w\gamma$-limit of $(\Omega_{n_k})$.\\
Finally, by Definitions \ref{dgammao} and \ref{dwgamma} we obtain that the $w\gamma$-convergence is weaker than the $\gamma$-convergence.
\end{rema}

Since the $w\gamma$-convergence is rather weak, the class of $w\gamma$-lower semicontinuous functionals is much smaller than the class of $\gamma$-lower semicontinuous functionals. However, the proposition below shows that some relevant examples are still valid.

\begin{prop}\label{plebes}
Let $f\in L^1(D)$ be a nonnegative function. Then the mapping $\Omega\mapsto\int_\Omega f\,dx$ is $w\gamma$-lower semicontinuous on $\A(D)$.
\end{prop}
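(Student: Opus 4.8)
The plan is to work directly from the definition of $w\gamma$-convergence and reduce the statement to an application of Fatou's lemma. Suppose $\Omega_n\to\Omega$ in the $w\gamma$-convergence, so that $w_{\Omega_n}=\RR_{\Omega_n}(1)\rightharpoonup w$ weakly in $H^1_0(D)$ with $\Omega=\{w>0\}$; I must show $\int_\Omega f\,dx\le\liminf_n\int_{\Omega_n}f\,dx$. The first step is to rewrite both integrals in terms of the torsion functions $w_{\Omega_n}$. Since $-\Delta w_{\Omega_n}=1>0$ on $\Omega_n$, the (strong) maximum principle gives $w_{\Omega_n}>0$ q.e. on $\Omega_n$, while $w_{\Omega_n}\in H^1_0(\Omega_n)$ forces $w_{\Omega_n}=0$ q.e. on $D\setminus\Omega_n$; hence $\{w_{\Omega_n}>0\}=\Omega_n$ up to a set of capacity zero, and in particular up to a Lebesgue-null set. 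Consequently $\int_{\Omega_n}f\,dx=\int_D f\,\mathbf 1_{\{w_{\Omega_n}>0\}}\,dx$ and likewise $\int_\Omega f\,dx=\int_D f\,\mathbf 1_{\{w>0\}}\,dx$.

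Next, to handle the $\liminf$ over the full sequence, I would pass to a subsequence $(\Omega_{n_k})$ realizing $\liminf_n\int_{\Omega_n}f\,dx$. Along it $w_{\Omega_{n_k}}\rightharpoonup w$ still holds, and the compact embedding $H^1_0(D)\hookrightarrow L^2(D)$ lets me extract a further subsequence (not relabelled) with $w_{\Omega_{n_k}}\to w$ strongly in $L^2(D)$, hence a.e. in $D$. The crucial pointwise fact is then that at a.e. point $x$ with $w(x)>0$ one has $w_{\Omega_{n_k}}(x)>0$ for $k$ large, so that
$$\mathbf 1_{\{w>0\}}(x)\le\liminf_k\mathbf 1_{\{w_{\Omega_{n_k}}>0\}}(x)\qquad\text{for a.e. }x\in D,$$
the inequality being trivial where $w(x)=0$ since the right-hand side is nonnegative.

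Finally, as $f\ge0$, multiplying by $f$ preserves this inequality, and Fatou's lemma applied to the nonnegative integrands $f\,\mathbf 1_{\{w_{\Omega_{n_k}}>0\}}$ yields
$$\int_\Omega f\,dx\le\int_D f\,\liminf_k\mathbf 1_{\{w_{\Omega_{n_k}}>0\}}\,dx\le\liminf_k\int_D f\,\mathbf 1_{\{w_{\Omega_{n_k}}>0\}}\,dx=\liminf_n\int_{\Omega_n}f\,dx,$$
which is the desired lower semicontinuity. The main obstacle is precisely the step just before Fatou: a.e. convergence $w_{\Omega_{n_k}}\to w$ does \emph{not} give convergence of the characteristic functions $\mathbf 1_{\{w_{\Omega_{n_k}}>0\}}$, only the one-sided bound above, because points where $w=0$ but $w_{\Omega_{n_k}}>0\to0$ may carry extra mass in the approximating integrals. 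This one-sidedness is exactly what Fatou requires, and it also explains why one should expect only lower semicontinuity, rather than continuity, of $\Omega\mapsto\int_\Omega f\,dx$.
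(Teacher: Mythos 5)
Your proof is correct and follows essentially the same route as the paper: extract an a.e.-convergent subsequence of the torsion functions, deduce the pointwise bound $1_\Omega\le\liminf_n 1_{\Omega_n}$ a.e., and conclude by Fatou's lemma. The only differences are cosmetic refinements on your part --- you pass first to a subsequence realizing the liminf (a standard step the paper leaves implicit) and you invoke the strong maximum principle to get the full identification $\{w_{\Omega_n}>0\}=\Omega_n$ up to null sets, whereas only the inclusion $\{w_{\Omega_n}>0\}\subset\Omega_n$, which follows directly from $w_{\Omega_n}\in H^1_0(\Omega_n)$, is actually needed.
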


\begin{proof}
Let $(\Omega_n)$ be a sequence in $\A(D)$ that $w\gamma$-converges to some $\Omega\in\A(D)$; this means that $w_{\Omega_n}\to w$ in $L^2(\R^N)$ and that $\Omega=\{w>0\}$. Passing to a subsequence we may assume that $w_{\Omega_n}\to w$ a.e. on $D$. Suppose $x\in\Omega$ is a point where $w_{\Omega_n}(x)\to w(x)$. Then $w(x)>0$, and for $n$ large enough we have that $w_{\Omega_n}(x)>0$. Hence $x\in\Omega_n$. So we have shown that
$$1_\Omega(x)\le\liminf_{n\to+\infty}1_{\Omega_n}(x)\qquad\hbox{for {\it a.e.} }x\in D.$$
Fatou's lemma now completes the proof.
\end{proof}

In order to show the $w\gamma$-lower semicontinuity of other shape functionals, the following lemma is needed.

\begin{lemm}\label{lgm1}
Let $(\Omega_n)$ be a sequence of quasi-open sets $w\gamma$-converging to a quasi-open set $\Omega$. Then there exists a subsequence (still denoted by the same indices) and a sequence of quasi-open sets $G_n\subset D$ with $\Omega_n\subset G_n$ such that $G_n$ $\gamma$-converges to $\Omega$.
\end{lemm}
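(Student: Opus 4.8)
The plan is to take the explicit enlargement $G_n=\Omega_n\cup\Omega$ and to show that, along a suitable subsequence, it $\gamma$-converges to $\Omega$. First I would record the elementary facts: each $G_n$ is quasi-open, being a union of two quasi-open sets, it satisfies $\Omega_n\subset G_n\subset D$, and it contains $\Omega$. The heuristic behind this choice is that adding $\Omega$ fills precisely those holes of $\Omega_n$ that lie inside $\Omega$ — the holes responsible for the excess of the $\gamma$-limit measure of $\Omega_n$ over $\infty_{D\setminus\Omega}$ — while the holes sitting in $D\setminus\Omega$ are left untouched, so that the limiting complement saturates $D\setminus\Omega$ and nothing more.

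Next I would set up the limits. Since $\Omega_n\to\Omega$ in $w\gamma$, the functions $w_{\Omega_n}=\RR_{\Omega_n}(1)$ converge weakly in $H^1_0(D)$, hence strongly in $L^2(D)$, to some $w$ with $\Omega=\{w>0\}$; passing to a subsequence I may also assume $w_{\Omega_n}\to w$ a.e. By the compactness of the $\gamma$-convergence on $\M_0(D)$ (Proposition \ref{pmeas}(i)) I would extract a further subsequence along which $\Omega_n$ $\gamma$-converges to a measure $\mu\in\M_0(D)$; then $w=\RR_\mu(1)$, and the representation of Example \ref{x29}(iii) with $f\equiv1$ gives $\mu=+\infty$ on $\{w=0\}=D\setminus\Omega$, i.e. $\mu\ge\nu:=\infty_{D\setminus\Omega}$. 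Using Proposition \ref{pmeas}(i) once more, I may assume that $G_n$ $\gamma$-converges to some $\mu'\in\M_0(D)$, and the whole problem reduces to proving $\mu'=\nu$.

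The inequality $\mu'\le\nu$ is the easy half. For every $u\in H^1_0(\Omega)$ the inclusion $\Omega\subset G_n$ gives $u\in H^1_0(G_n)$, so the constant sequence $u_n\equiv u$ is an admissible recovery sequence for the $\Gamma$-limit and yields $\int_D|\nabla u|^2\,dx+\int_D u^2\,d\mu'\le\int_D|\nabla u|^2\,dx$. Hence $\int_D u^2\,d\mu'=0$ for all such $u$, which forces $\mu'=0$ on $\Omega$ and therefore $\mu'\le\nu$.

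The reverse inequality $\mu'\ge\nu$, equivalently $\RR_{\mu'}(1)=0$ q.e. on $D\setminus\Omega$, is the main obstacle. Here I would exploit the lattice identity $\infty_{D\setminus G_n}=\infty_{D\setminus\Omega_n}\wedge\infty_{D\setminus\Omega}$, which comes from $D\setminus G_n=(D\setminus\Omega_n)\cap(D\setminus\Omega)$, together with the stability of the infimum against a fixed capacitary measure under $\gamma$-convergence: since $\infty_{D\setminus\Omega_n}$ $\gamma$-converges to $\mu$ while the second factor $\nu$ is fixed, one expects $\mu'=\mu\wedge\nu$, and then $\mu\ge\nu$ gives $\mu\wedge\nu=\nu$. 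The delicate point is exactly this $\gamma$-stability of the operation $\,\cdot\wedge\nu$: it encodes the fact that, because $\mu=+\infty$ on $D\setminus\Omega$, the complements $D\setminus\Omega_n$ remain capacitarily dense in $D\setminus\Omega$ even after intersection with $D\setminus\Omega$, so that no positivity of the limiting state can survive there; establishing this is where the real work lies, and I would prove it through the $\Gamma$-liminf inequality by controlling competitors for $G_n$ in terms of competitors for $\Omega_n$ on $D\setminus\Omega$. Once $\mu'=\nu$ is secured, $G_n$ $\gamma$-converges to $\Omega$ along the subsequence, and since the limit $\Omega$ is independent of the chosen subsequence the conclusion follows.
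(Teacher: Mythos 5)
Your preliminary steps (extracting $\gamma$-limits $\mu$ and $\mu'$, the fact that $\mu$ is $+\infty$ on $\{w=0\}$, and the easy inequality $\mu'\le\nu:=\infty_{D\setminus\Omega}$ via constant recovery sequences) are correct, but the construction $G_n=\Omega_n\cup\Omega$ itself is wrong: the hard inequality $\mu'\ge\nu$, which you defer as ``where the real work lies,'' is not merely unproven --- it is false, as is the claimed $\gamma$-stability of $\cdot\wedge\nu$. Concretely, take $d=2$, $D=(0,2)\times(-1,1)$, $K=[\tfrac12,\tfrac32]\times\{0\}$, $\Omega=D\setminus K$, and $\Omega_n=D\setminus K_n$ with $K_n=\{(x,\tfrac1n\sin(n^2\pi x)):x\in[\tfrac12,\tfrac32]\}$. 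Each $\overline D\setminus\Omega_n=\partial D\cup K_n$ has two connected components and converges in Hausdorff distance to $\overline D\setminus\Omega$, so by \v{S}ver\'ak's theorem (Section \ref{ssgeom}, \cite{sve93}) $\Omega_n\to\Omega$ in the $\gamma$-convergence; since $\Omega$ is open and connected, $\{w_\Omega>0\}=\Omega$ and hence $\Omega_n\to\Omega$ in the $w\gamma$-convergence as well, so the hypotheses of the Lemma hold. But $K_n\cap K$ is a finite set of points, which has capacity zero, so $G_n=\Omega_n\cup\Omega=D\setminus(K_n\cap K)$ satisfies $H^1_0(G_n)=H^1_0(D)$: every subsequence of $(G_n)$ $\gamma$-converges to $D$, not to $\Omega$ (and $\Cp(K)>0$, so $\Omega\ne D$). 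In your lattice language: $\infty_{K_n}\to\infty_K=\nu$ in $\gamma$, while $\infty_{K_n}\wedge\nu=\infty_{K_n\cap K}=0$ for every $n$, so the meet with a fixed measure is not $\gamma$-continuous. No subsequence can repair this, so the gap in your proof is unfixable within your choice of $G_n$.

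The failure points at the flaw in your heuristic: holes of $\Omega_n$ lying \emph{inside} $\Omega$ but accumulating on $D\setminus\Omega$ are exactly what creates the Dirichlet condition on $D\setminus\Omega$ in the limit, and the union with $\Omega$ erases them, because $D\setminus G_n=(D\setminus\Omega_n)\cap(D\setminus\Omega)$ can have zero capacity even when both factors have capacity bounded away from zero. This is precisely why the paper does not enlarge $\Omega_n$ by $\Omega$ but by the shrunken level sets $\Omega^\eps=\{w_\Omega>\eps\}$, taking $G_n=\Omega_n\cup\Omega^{\eps_n}$ with a diagonal choice $\eps_n\to0$. The point of the shrinking is that the cutoff $v^\eps=1-\tfrac1\eps\min\{w_\Omega,\eps\}$ separates $\Omega^\eps$ from $D\setminus\Omega$ (it is $0$ on $\Omega^\eps$ and $1$ on $D\setminus\Omega$), so that $u_n=\min\{v^\eps,w_{\Omega_n\cup\Omega^\eps}\}$ lies in $H^1_0(\Omega_n)$; passing to the limit and using that weak limits of functions in $H^1_0(\Omega_n)$ vanish q.e. on $\{w=0\}$, one gets that the limit $w^\eps$ of $w_{\Omega_n\cup\Omega^\eps}$ vanishes q.e. on $D\setminus\Omega$, which is the step your construction cannot reproduce: no such separating function exists between $\Omega$ itself and $D\setminus\Omega$.
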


\begin{proof}
Let us denote by $w_\Omega$ the solution $\RR_\Omega(1)$ of the Dirichlet problem
$$-\Delta u=1\hbox{ in }\Omega,\qquad u\in H^1_0(\Omega);$$
we have $w=w_\Omega=0$ on $D\setminus\Omega$ and (see for instance \cite{dmmu97} for a detailed proof) $w\le w_\Omega$ on $\Omega$, and hence $w\le w_\Omega$ on the entire set $D$.\\
For each $\eps>0$ we define the quasi-open set $\Omega^\eps=\{w_\Omega>\eps\}$. For a subsequence, we may suppose that
$$w_{\Omega_n\cup\Omega^\eps}\hbox{ converge to some $w^\eps$ weakly in }H^1_0(D)$$
and, since by a comparison principle we have $w_{\Omega_n\cup\Omega^\eps}\ge w_{\Omega^\eps}$, passing to the limit as $n\to+\infty$ we have that $w^\eps\ge w_{\Omega^\eps}$. Let us show that $w^\eps\in H^1_0(\Omega)$. Indeed, defining $v^\eps=1-\frac{1}{\eps}\min\{w_\Omega,\eps\}$ we get $0\le v^\eps\le1$, $v^\eps=0$ on $\Omega^\eps$, and $v^\eps=1$ on $D\setminus\Omega$.\\
Taking $u_n=\min\{v^\eps,w_{\Omega_n\cup\Omega^\eps}\}$ we get $u_n=0$ on $\Omega^\eps\cup(D\setminus(\Omega_n\cup\Omega^\eps))$, and in particular on $D\setminus\Omega_n$. Moreover $u_n$ converges to $\min\{v^\eps,w^\eps\}$ weakly in $H^1_0(D)$ and hence $\min\{v^\eps,w^\eps\}$ vanishes q.e. on $\{w=0\}$. Since $v^\eps=1$ on $D\setminus\Omega$ we get that $w^\eps=0$ q.e. on $D\setminus\Omega$. Using Theorem 5.1 of \cite{dmmu97}, from the fact that $-\Delta w_{\Omega_n\cup\Omega^\eps}\le1$ in $D$ we get $-\Delta w^\eps\le1$ and hence $w^\eps\le w_\Omega$. Finally $w_{\Omega^\eps}\le w^\eps\le w_\Omega$, and by a diagonal extraction procedure we get that
$$w_{\Omega_n\cup\Omega^{\eps_n}}\hbox{ converge to $w_\Omega$ weakly in }H^1_0(D).$$
Therefore the quasi-open sets $G_n=\Omega_n\cup\Omega^{\eps_n}$ $\gamma$-converge to $\Omega$, which concludes the proof.
\end{proof}

For monotone decreasing shape functionals $F$ the $\gamma$-lower semicontinuity and the $w\gamma$-lower semicontinuity coincide, as the following result shows.

\begin{prop}\label{pmonot}
Let $F:\A(D)\to[-\infty,+\infty]$ be a $\gamma$-lower semicontinuous shape functional which is monotone decreasing with respect to the set inclusion. Then $F$ is $w\gamma$-lower semicontinuous.
\end{prop}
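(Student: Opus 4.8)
The plan is to verify the definition of $w\gamma$-lower semicontinuity directly: given a sequence $(\Omega_n)$ in $\A(D)$ that $w\gamma$-converges to $\Omega$, I must show $F(\Omega)\le\liminf_n F(\Omega_n)$. The whole strategy rests on using the sandwiching sets supplied by Lemma \ref{lgm1} to transfer the problem from the weak $\gamma$-topology, where no semicontinuity information about $F$ is available, to the $\gamma$-convergence, where the $\gamma$-lower semicontinuity of $F$ can be invoked.

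First I would reduce to a convenient subsequence. Set $L=\liminf_n F(\Omega_n)$ and extract a subsequence $(\Omega_{n_k})$ with $F(\Omega_{n_k})\to L$. Since the defining weak $H^1_0(D)$ convergence of $w_{\Omega_n}=\RR_{\Omega_n}(1)$ is inherited by subsequences, this $(\Omega_{n_k})$ still $w\gamma$-converges to the same limit $\Omega$. This reduction is necessary because Lemma \ref{lgm1} produces its auxiliary sets only after passing to a further subsequence, so I must first arrange that the subsequence I work with already realizes the $\liminf$.

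Next I would apply Lemma \ref{lgm1} to $(\Omega_{n_k})$: after a further extraction (not relabeled) I obtain quasi-open sets $G_k\subset D$ with $\Omega_{n_k}\subset G_k$ and $G_k\to\Omega$ in the $\gamma$-convergence. Now the two hypotheses on $F$ combine. From the inclusion $\Omega_{n_k}\subset G_k$ together with the monotone decreasing property I get $F(G_k)\le F(\Omega_{n_k})$ for every $k$, whence $\liminf_k F(G_k)\le\liminf_k F(\Omega_{n_k})=L$. On the other hand, since $G_k\to\Omega$ in $\gamma$, the assumed $\gamma$-lower semicontinuity gives $F(\Omega)\le\liminf_k F(G_k)$. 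Chaining these two inequalities yields $F(\Omega)\le L=\liminf_n F(\Omega_n)$, which is exactly the required $w\gamma$-lower semicontinuity; the same chain covers the cases $L=\pm\infty$ without modification.

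The only delicate point is the order of operations: Lemma \ref{lgm1} is available only up to subsequences and only guarantees $\gamma$-convergence of the enlarged sets $G_k$, not of the $\Omega_{n_k}$ themselves (indeed the latter need not $\gamma$-converge at all, as the Cioranescu--Murat phenomenon shows). Committing first to a $\liminf$-realizing subsequence, and only then invoking the lemma, is precisely what lets the monotonicity bridge the gap between the weak and the strong $\gamma$-topologies. No further estimates are needed.
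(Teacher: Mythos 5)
Your proposal is correct and follows essentially the same route as the paper's own proof: pass first to a subsequence realizing the $\liminf$, then invoke Lemma \ref{lgm1} to obtain enlarged sets $G_k\supset\Omega_{n_k}$ that $\gamma$-converge to $\Omega$, and chain the monotonicity bound $F(G_k)\le F(\Omega_{n_k})$ with the $\gamma$-lower semicontinuity of $F$. Your added remarks on the order of operations and on the stability of $w\gamma$-convergence under extraction of subsequences are accurate but only make explicit what the paper leaves implicit.
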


\begin{proof}
Let us consider a sequence of quasi-open sets $(\Omega)_n$ in $\A(D)$ $w\gamma$-converging to some quasi-open set $\Omega\in\A(D)$, and let $\big\{\Omega_{n_k}\big\}$ be a subsequence such that
$$\lim_{k\to+\infty}F(\Omega_{n_k})=\liminf_{n\to+\infty}F(\Omega_n).$$
By Lemma \ref{lgm1} there exists a subsequence (which we still denote by
$\{\Omega_{n_k}\}$) and $G_{n_k}\supset\Omega_{n_k}$ such that $G_{n_k}$ $\gamma$-converge to $\Omega$. The $\gamma$-lower semicontinuity of $F$ gives
$$F(\Omega)\le\liminf_{k\to+\infty}F(G_{n_k})$$
and the monotonicity of $F$ gives
$$F(G_{n_k})\le F(\Omega_{n_k}).$$
Therefore
$$F(\Omega)\le\liminf_{k\to+\infty}F(G_{n_k})\le\liminf_{k\to+\infty}F(\Omega_{n_k})=\liminf_{n\to+\infty}F(\Omega_n)$$
which proves the required $w\gamma$-lower semicontinuity.
\end{proof}

\proof[Proof of Theorem \ref{exth}] The proof of Theorem \ref{exth} is now straightforward. In fact, if $(\Omega_n)$ is a minimizing sequence of quasi-open sets for the optimization problem \eqref{eshpb}, by the compactness of the $w\gamma$-convergence we may extract a subsequence (still denoted by the same indices) that $w\gamma$-converges to some quasi-open set $\Omega\in\A(D)$. By Proposition \ref{plebes} we have
$$|\Omega|\le\liminf_{n\to+\infty}|\Omega_n|\le m$$
and by Proposition \ref{pmonot} we have
$$F(\Omega)\le\liminf_{n\to+\infty}F(\Omega_n).$$
Therefore $\Omega$ is an optimal set for the minimum problem \eqref{eshpb}.
\endproof

\begin{exam}\label{elambda1}
The minimization
$$\min\big\{\lambda_1(\Omega)\ :\ \Omega\in\A(D),\ |\Omega|\le m\big\}$$
is one of the first examples of spectral optimization problems. By Theorem \ref{exth} we know that an optimal solution exists and, it was conjectured by lord Rayleigh that, for $D$ large enough (to contain at least a ball of measure $m$) the optimal domain is a ball of measure $m$. The first proof of this fact was obtained by Faber and Krahn by symmetrization techniques (see \cite{fab23,kra24,kra26}). On the contrary, if $D$ does not contain a ball of measure $m$, the optimal sets $\Omega_{opt}$ have to touch the boundary $\partial D$; it has been shown (see \cite{hay00,brla10}) that such sets $\Omega_{opt}$ are actually open sets and that their free boundary (i.e. the part of $\partial\Omega_{opt}$ included in $D$) is smooth. However, the free boundary $D\cap\partial\Omega_{opt}$ does not contain any part of spherical surface (see \cite{heou01,heou03} and also \cite{hen06} Theorem 3.4.1), in the sense that no part of $D\cap\partial\Omega_{opt}$ locally coincides with a sphere.\\
When the bounding box $D$ is convex we expect that the optimal sets $\Omega_{opt}$ for $\lambda_1$ are convex too; however, this result, even if very natural and strongly expected, is not yet available, and the question is still open.
\end{exam}

\begin{exam}\label{elambda2}
The minimization problem
$$\min\big\{\lambda_2(\Omega)\ :\ \Omega\in\A(D),\ |\Omega|\le m\big\}$$
also verifies the assumptions of the existence Theorem \ref{exth}; as proved in \cite{kra26,pol55}, for $D$ large enough (to contain at least two disjoint balls of measure $m/2$ each) the optimal domain is the union of two disjoint balls of measure $m/2$ each. As before, if $D$ is not large enough in the sense above, the optimal sets $\Omega_{opt}$ have to touch the boundary $\partial D$, but in this case, even if it seems reasonable to conjecture that the free boundary of optimal sets is regular, the regularity question is still open. Actually, the proof that $\Omega_{opt}$ are {\it open} sets is not yet available.\\
Minimizing $\lambda_2(\Omega)$ in the more restricted class $\big\{\Omega\in\A(D),\ |\Omega|\le m,\ \Omega\hbox{ convex}\big\}$ also admits an optimal solution $\Omega_{opt}$, as it is easy to prove. When $D$ is large enough, since without the convexity assumption the solution is given by two equal disjoint balls, a reasonable expectation, also supported in \cite{tro73} by some numerical computations, is that $\Omega_{opt}$ is a {\it stadium}, i.e. the convex hull of two equal disjoint balls tangent each other. This conjecture has been disproved in \cite{heou01,heou03} where again it has been shown that, even if $\Omega_{opt}$ is very close to a stadium, the nonflat parts do not locally coincide with a sphere.
\end{exam}

When $k\ge3$ the optimal shapes for the minimization problem
$$\min\big\{\lambda_k(\Omega)\ :\ \Omega\in\A(D),\ |\Omega|\le m\big\},$$
that exist again thanks to the existence Theorem \ref{exth}, are not known. For $k=3$ and $D$ large enough the conjecture, still open, is that:

\medskip{\it the optimal domains for $\lambda_3$ are balls if the dimension $d$ is $2$ or $3$ and the union of three equal disjoint balls when $d\ge4$.}\medskip

\noindent For $k=4$, $d=2$ and $D$ large enough the conjecture, also still open is:

\medskip{\it the optimal domains for $\lambda_4$ in dimension $d=2$ are the union of two disjoint balls of different radius, whose radii are in the ratio $(j_{0,1}/j_{1,1})^{1/2}\sim0.79$, where $j_{0,1}$ and $j_{1,1}$ are the first zeroes of the Bessel functions $J_0$ and $J_1$ respectively.}\medskip

\noindent For $k\ge5$ one could expect that optimal sets are also made by a suitable array of balls. This is false, as shown by Keller and Wolf for $d=2$ and $k=13$; in fact in \cite{oud04} numerical computation are provided for $d=2$ showing that for $k\ge5$ optimal arrays of disks are not optimal (see Figure \ref{flambdak}).

\begin{figure}[h]
\centerline{\includegraphics[width=12.0cm]{lambdak}}
\caption{Optimal domains for $\lambda_k$ and optimall arrays of disks, $3\le k\le10$.}\label{flambdak}
\end{figure}

\begin{rema}\label{unbounded}
In Theorem \ref{exth} the assumption that admissible domains are all contained in a given {\it bounded} domain $D$ is crucial. A similar existence result for $D=\R^d$ is not known. For instance, the eigenvalue optimization problem
$$(\PP_k)\hskip2truecm\min\big\{\lambda_k(\Omega)\ :\ \Omega\subset\R^d,\ |\Omega|\le m\big\}$$
is known to have a solution for $k=1$ (a ball of measure $m$), for $k=2$ (two disjoint balls of measure $m/2$ each) and for $k=3$ (a ball conjectured if $d=2,3$ and three equal disjoint balls for $d\ge4$). This last existence result was proved in \cite{buhe00}, where more generally it is shown that if the minimizers of problem $(\PP_j)$ exist and are {\it bounded} for $j=1,\dots,k-1$, then the minimizers of problem $(\PP_k)$ exist. For instance, a proof of boundedness of minimizers of $\lambda_3(\Omega)$ with $\Omega\subset\R^d$ would imply the existence of optimal domains for $\lambda_4(\Omega)$ with $\Omega\subset\R^d$.
\end{rema}

Several other explicit solutions for minimization problems involving eigenvalues are known or conjectured; we refer the interested reader to \cite{ash99,bubu05,hen03,hen06,hepi05} for a wider presentation of this subject.

\subsection{Spectral problems with perimeter constraint}\label{ssperim}

We consider in this section the case when a perimeter constraint is imposed on the admissible domains. Here we use the De Giorgi definition of perimeter
(see for instance \cite{afp01}): a measurable set $\Omega$ is said to have a {\it finite perimeter} if its characteristic function
$$1_\Omega(x)=\left\{\begin{array}{ll}
1&\hbox{ if }x\in\Omega\\
0&\hbox{ otherwise}\end{array}\right.$$
belongs to the space $BV(\R^d)$ of functions with bounded variation on $\R^d$, i.e. functions in $L^1(\R^d)$ whose distributional gradient is a measure with finite total variation. In this case the perimeter of $\Omega$ is defined by
$$\per(\Omega)=\int_{\R^d}|\nabla1_\Omega|.$$
Since for a sequence of domains equi-bounded perimeter implies strong $L^1$ compactness of the characteristic functions, the situation with perimeter constraint on the admissible class could seem a priori better than the one with volume constraint. However the following difficulties arise.

\begin{enumerate}
\item[$\bullet$] The relaxation of Dirichlet problems, and hence the appearance of capacitary measures as limits of minimizing sequences, may occur even with a perimeter constraint on the admissible domains. It is enough to look at the Cioranescu and Murat example presented in Section \ref{ssmeas}: the domains $\gamma$-converging to the Lebesgue measure can be taken with perimeter arbitrarily small.

\item[$\bullet$] The perimeter is not $w\gamma$-lower semicontinuous. Indeed, again by a construction like the one of Cioranescu and Murat, we may produce a sequence $(\Omega_n)$ of subsets of the ball $B_2$ centered at the origin and of radius 2 made by removing $n$ small holes from the ball $B_1$ of radius 1. If $r_n$ denotes the critical radius in Cioranescu and Murat example, it is enough to take the radius $s_n$ of the small holes such that
$$r_n\ll s_n\ll n^{1/(1-d)}.$$
In this way, the limit set is $\Omega=B_2\setminus B_1$ and
$$\per(B_2)+\per(B_1)=\per(\Omega)>\lim_{n\to+\infty}\per(\Omega_n)=\per(B_2).$$

\item[$\bullet$] The perimeter constraint gives the compactness of minimizing sequences in the $L^1$ convergence of characteristic functions, but the $\gamma$-limit can be considerably smaller. Indeed, the same example as above, with removed balls radius $s_n$ such that
$$r_n\ll s_n\ll n^{-1/d}$$
shows that $\Omega_n\to B_2$ in the $L^1$ sense, while $\Omega_n\to B_2\setminus B_1$ in the $\gamma$-convergence sense.
\end{enumerate}

The link between $w\gamma$-convergence and $L^1$-convergence is given by the following proposition.

\begin{prop}\label{link}
Let $(A_n)$ be a sequence of quasi-open sets which $w\gamma$-converges to a quasi-open set $A$, and assume that there exist measurable sets $\Omega_n$ such that $A_n\subset\Omega_n$, and that $(\Omega_n)$ converges in $L^1$ to a measurable set $\Omega$. Then we have $|A\setminus\Omega|=0$.
\end{prop}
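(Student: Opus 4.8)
The plan is to exploit the defining property of the $w\gamma$-limit together with the inclusion $A_n\subset\Omega_n$, reducing the whole statement to a passage to the limit in a product of an $L^2$-convergent sequence and a sequence of characteristic functions. Write $w_{A_n}=\RR_{A_n}(1)$ for the solution of $-\Delta u=1$ in $A_n$, extended by zero. By the definition of $w\gamma$-convergence, $w_{A_n}$ converges weakly in $H^1_0(D)$ to a (quasi-continuous) function $w$ with $A=\{w>0\}$; since $D$ is bounded, the compact embedding of $H^1_0(D)$ into $L^2(D)$ upgrades this to strong convergence $w_{A_n}\to w$ in $L^2(D)$, and in particular in $L^1(D)$.

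The key observation is that each $w_{A_n}$ lies in $H^1_0(A_n)$, hence vanishes q.e., and a fortiori a.e., on $D\setminus A_n$; since $A_n\subset\Omega_n$ this gives $w_{A_n}=0$ a.e. on $D\setminus\Omega_n$, that is $w_{A_n}=w_{A_n}\,1_{\Omega_n}$ a.e. on $D$. I would then pass to the limit in this identity. The hypothesis $\Omega_n\to\Omega$ in $L^1$ means $1_{\Omega_n}\to 1_\Omega$ in $L^1(D)$, and since $|1_{\Omega_n}-1_\Omega|^2=|1_{\Omega_n}-1_\Omega|$ the same convergence holds in $L^2(D)$. Multiplying two $L^2$-convergent sequences, one gets $w_{A_n}1_{\Omega_n}\to w\,1_\Omega$ in $L^1(D)$: split $w_{A_n}1_{\Omega_n}-w\,1_\Omega=(w_{A_n}-w)1_{\Omega_n}+w(1_{\Omega_n}-1_\Omega)$ and bound each term by Cauchy--Schwarz, using that $(w_{A_n})$ is bounded in $L^2(D)$. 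On the other hand the left-hand side equals $w_{A_n}$, which already converges to $w$ in $L^1(D)$. By uniqueness of the $L^1$-limit we conclude $w=w\,1_\Omega$ a.e., i.e. $w=0$ a.e. on $D\setminus\Omega$.

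Finally, since $A=\{w>0\}$, the set $A\setminus\Omega$ equals $\{w>0\}\cap(D\setminus\Omega)$, which sits inside $D\setminus\Omega$ where $w=0$ a.e.; hence it is Lebesgue-negligible and $|A\setminus\Omega|=0$, as claimed.

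I do not expect a serious obstacle here: the argument is short once one writes $w_{A_n}=w_{A_n}1_{\Omega_n}$. The only point requiring care is the interplay between the two notions of convergence — the solutions $w_{A_n}$ converge only weakly in $H^1_0(D)$, while the sets $\Omega_n$ converge only in $L^1$ — so one must turn both into honest $L^2$ (equivalently $L^1$) convergences before multiplying. Promoting the weak $H^1_0$ convergence to strong $L^2$ convergence via the compact embedding, and promoting the $L^1$ convergence of characteristic functions to $L^2$, is exactly what makes the limit of the product computable and identifies it with $w\,1_\Omega$.
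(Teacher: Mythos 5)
Your proof is correct, but it follows a genuinely different route from the paper's. The paper disposes of this proposition in two lines by invoking Proposition \ref{plebes} (the $w\gamma$-lower semicontinuity of $\Omega\mapsto\int_\Omega f\,dx$ for nonnegative $f\in L^1$) with $f=1_{D\setminus\Omega}$: this gives $|A\setminus\Omega|\le\liminf_n|A_n\setminus\Omega|$, and then $|A_n\setminus\Omega|\le|A_n\setminus\Omega_n|+|\Omega_n\setminus\Omega|=|\Omega_n\setminus\Omega|\to0$ by the inclusion $A_n\subset\Omega_n$ and the $L^1$ convergence. Note that the proof of Proposition \ref{plebes} itself runs through an a.e.-convergent subsequence, the pointwise inequality $1_A\le\liminf_n 1_{A_n}$ a.e., and Fatou's lemma. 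You bypass that lemma entirely: your key identity $w_{A_n}=w_{A_n}1_{\Omega_n}$ a.e., combined with upgrading the weak $H^1_0(D)$ convergence to strong $L^2(D)$ convergence (Rellich) and the $L^1$ convergence of characteristic functions to $L^2$ convergence, lets you compute the limit of the product by Cauchy--Schwarz and conclude $w=w\,1_\Omega$ a.e., hence $w=0$ a.e. on $D\setminus\Omega$ and $|A\setminus\Omega|=|\{w>0\}\setminus\Omega|=0$. What the paper's argument buys is brevity and modularity, since Proposition \ref{plebes} is already established and reused elsewhere (e.g.\ to handle the volume constraint in Theorem \ref{exth}). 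What your argument buys is self-containedness — no subsequence extraction, no Fatou, no appeal to the lemma — and it isolates the slightly stronger intermediate fact that the full weak limit $w$ of the state functions vanishes a.e.\ outside $\Omega$, from which the measure statement is immediate. Both proofs rest on the same two pillars: the characterization $A=\{w>0\}$ from Definition \ref{dwgamma}, and the fact that $w_{A_n}$ is supported (up to null sets) in $\Omega_n$.
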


\begin{proof}
By applying Proposition \ref{plebes} with $f=1_{D\setminus\Omega}$ we obtain
$$|A\setminus\Omega|\le\liminf_{n\to\infty}|A_n\setminus\Omega|=
\liminf_{n\to\infty}|A_n\setminus\Omega_n|=0,$$
which concludes the proof.
\end{proof}

In order to consider shape optimization problems with perimeter constraints it is convenient to extend the definition of a monotone decreasing functional $F$ defined on $\A(D)$ also to measurable sets by setting
\begin{equation}\label{exten}
F(A)=\inf\big\{F(\Omega)\ :\ \Omega\subset A\,a.e.,\ \Omega\in\A(D)\big\}.
\end{equation}
We notice that, since $F$ is monotone decreasing, we have that its value on $\A(D)$ is not modified by this extension.

The shape optimization problems we are interested in are of the form
\begin{equation}\label{eshper}
\inf\big\{F(\Omega)\ :\ \Omega\subset D,\ \per(\Omega)\le L\big\},
\end{equation}
where $L$ is a given positive real number. 

\begin{teor}\label{texper}
If $F$ is $\gamma$-lower semicontinuous and decreasing with respect to the set inclusion, then there exists a finite perimeter set $\Omega_{opt}$ which solves the variational problem \eqref{eshper}.
\end{teor}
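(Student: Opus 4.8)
The plan is to run the direct method of the calculus of variations, but to keep the two compactness mechanisms at our disposal strictly separated: the $L^1$-compactness coming from the perimeter bound, and the $w\gamma$-compactness of the class $\A(D)$. First I would pick a minimizing sequence $(\Omega_n)$ of measurable sets with $\Omega_n\subset D$ and $\per(\Omega_n)\le L$, so that $F(\Omega_n)$ tends to the infimum in \eqref{eshper}. By the very definition \eqref{exten} of the extended functional, for each $n$ there is a quasi-open set $A_n\in\A(D)$ with $A_n\subset\Omega_n$ a.e.\ and $F(A_n)\le F(\Omega_n)+1/n$. The idea is that $(\Omega_n)$ will carry the perimeter information and $(A_n)$ the functional value.

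Next I would extract limits in both senses. The uniform perimeter bound together with the $BV$-compactness theorem yields, after passing to a subsequence, a measurable set $\Omega$ with $1_{\Omega_n}\to 1_\Omega$ in $L^1$; lower semicontinuity of the perimeter then gives $\per(\Omega)\le\liminf_{n\to\infty}\per(\Omega_n)\le L$, so $\Omega$ is an admissible finite perimeter set. On the other hand, the solutions $w_{A_n}=\RR_{A_n}(1)$ are bounded in $H^1_0(D)$, so by the compactness of the $w\gamma$-convergence noted after Definition \ref{dwgamma} a further subsequence of $(A_n)$ $w\gamma$-converges to some quasi-open set $A\in\A(D)$.

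The crucial link between the two limits is Proposition \ref{link}: since $A_n\subset\Omega_n$ and $\Omega_n\to\Omega$ in $L^1$, we obtain $|A\setminus\Omega|=0$, i.e.\ $A\subset\Omega$ a.e. Now the hypotheses on $F$ enter. By Proposition \ref{pmonot} the functional $F$, being $\gamma$-lower semicontinuous and decreasing, is $w\gamma$-lower semicontinuous; hence
$$F(A)\le\liminf_{n\to\infty}F(A_n)\le\liminf_{n\to\infty}\big(F(\Omega_n)+1/n\big),$$
whose right-hand side equals the infimum in \eqref{eshper}. Finally, because $A\in\A(D)$ and $A\subset\Omega$ a.e., the extension formula \eqref{exten} gives $F(\Omega)\le F(A)$. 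Combining the two inequalities yields $F(\Omega)\le\inf\eqref{eshper}$, while admissibility of $\Omega$ forces the reverse inequality, so $\Omega_{opt}:=\Omega$ is the desired minimizer.

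I expect the main subtlety to lie precisely in the bookkeeping of the two sequences and the reconciliation of their limits through Proposition \ref{link} and the monotone extension \eqref{exten}; one must never try to transport the perimeter along the $w\gamma$-convergence, since, as the Cioranescu--Murat type construction in Section \ref{ssperim} shows, the perimeter fails to be $w\gamma$-lower semicontinuous. It is therefore essential that the constraint be controlled entirely on the $L^1$ side, and that the roles of $(A_n)$ and $(\Omega_n)$ be merged only at the final comparison step.
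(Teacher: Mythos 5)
Your proof is correct and follows essentially the same route as the paper's: a minimizing sequence handled by $BV$-compactness on the perimeter side, quasi-open subsets supplied by the extension \eqref{exten}, $w\gamma$-compactness, Proposition \ref{link} to reconcile the two limits, and Proposition \ref{pmonot} plus monotonicity of the extension for the final chain of inequalities. Your use of near-minimizers $F(A_n)\le F(\Omega_n)+1/n$ is in fact slightly more careful than the paper, which asserts exact attainment of the infimum in \eqref{exten}; otherwise the two arguments coincide.
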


\begin{proof}
Let $(\Omega_n)$ be a minimizing sequence for problem \eqref{eshper}. Since $\per(\Omega_n)\le L$ we may extract a subsequence (still denoted by $(\Omega_n)$) that converges in $L^1$ to a set $\Omega$ with $\per(\Omega)\le L$.

By the construction of the extension of the functional $F$ to measurable sets there are quasi-open sets $\omega_n\subset\Omega_n$ a.e. such that
$$F(\omega_n)=F(\Omega_n).$$
By the compactness of $w\gamma$-convergence we may assume that $(\omega_n)$ is $w\gamma$-converging to some quasi-open set $\omega$, and by Proposition \ref{link} we have $|\omega\setminus\Omega|=0$. Therefore, we have that
$$F(\Omega)\le F(\omega)\le\liminf_{n\to\infty}F(\omega_n)=\liminf_{n\to\infty}F(\Omega_n).$$
Hence $\Omega$ solves the variational problem \eqref{eshper}.
\end{proof}

Of course, if $F(\Omega)=\lambda_1(\Omega)$ and $D$ is large enough to contain a ball of perimeter $L$, the balls solve the shape optimization problem \eqref{eshper} and are the unique minimizers. Indeed, symmetrizing $\Omega$ both reduces the cost $\lambda_1(\Omega)$ as well as the perimeter $\per(\Omega)$.

The situation is different for the cost $F(\Omega)=\lambda_2(\Omega)$, which was first considered in \cite{gasc53}. We summarize here the results obtained in \cite{bbh09}, to which we refer for the detailed proofs.

\begin{enumerate}
\item[$\bullet$] In the case $d=2$ it is easy to see that the optimal set $\Omega_{opt}$ has to be convex, since convexification both reduces the cost $\lambda_2$ as well as the perimeter.

\item[$\bullet$] In the case $d=2$, if $\Omega_{opt}$ is a minimizer for $\lambda_2$ with perimeter constraint, then $\lambda_2(\Omega)$ is simple.

\item[$\bullet$] Using the facts above, in the case $d=2$ it is possible to prove the $C^\infty$ regularity of $\Omega_{opt}$. Moreover, if $u_2$ denotes the second eigenfunction, with unitary $L^2$ norm, we have the necessary condition of optimality
$$|\nabla u_2(x)|^2=\frac{2\lambda_2(\Omega_{opt})}{\per(\Omega_{opt})}k(x)\qquad\forall x\in\partial\Omega_{opt}$$
where $k(x)$ is the curvature of $\partial\Omega_{opt}$ at $x$.

\item[$\bullet$] Always in the case $d=2$ additional necessary conditions of optimality can be proved, as:\\
- the boundary of $\Omega_{opt}$ does not contain any segment;\\
- the boundary of $\Omega_{opt}$ does not contain any arc of circle;\\
- the boundary of $\Omega_{opt}$ contains exactly two points where the curvature vanishes.\\
A numerical plot of $\Omega_{opt}$ in the two-dimensional case is given in Figure \ref{fperimeter}. The picture shows that minimizers in two dimensions 
should have two axes of symmetry (one of these containing the nodal line), 
but the proof of this fact is not yet available.

\item[$\bullet$] If $d\ge3$ no regularity results for the optimal domains are available; actually, at present we do not even know if optimal domains are open sets. For a similar problem with perimeter penalization the regularity of optimal domains has been proved in \cite{acks01}.

\item[$\bullet$] If $d\ge3$ optimal domains are not convex (see \cite{ivdb10}); they should have a cylindrical symmetry, even if this fact has not yet been proved.

\end{enumerate}

\begin{figure}[h]
\centerline{\includegraphics[width=11.0cm]{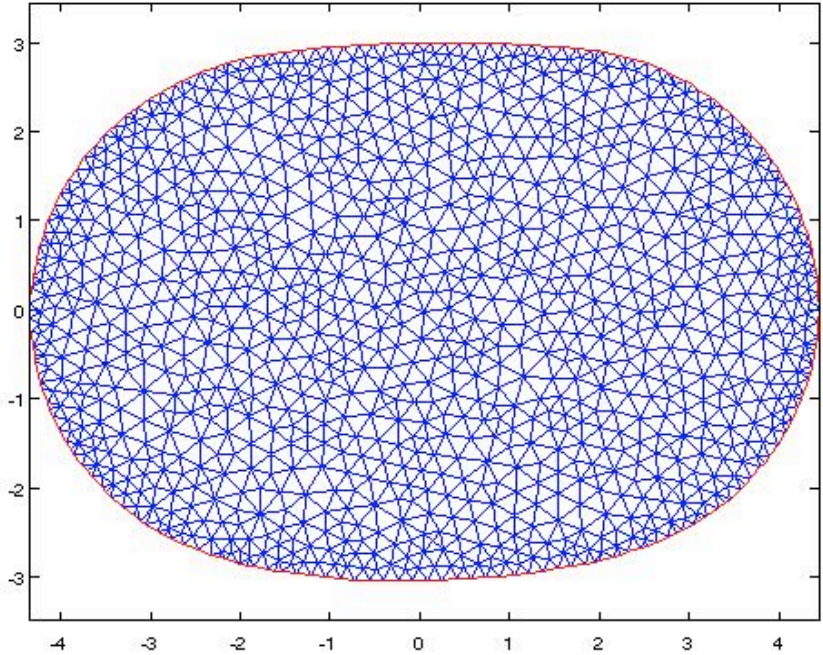}}
\caption{Plot of the optimal set for $\lambda_2$ with perimeter constraint, in the case $d=2$.}\label{fperimeter}
\end{figure}


\section{An example of nonmonotone cost functional}\label{stwo}

In this section we consider as a particular cost functional the case of a function which only depends on the first two eigenvalues of the Dirichlet Laplacian:
$$F(\Omega)=\Phi\big(\lambda_1(\Omega),\lambda_2(\Omega)\big).$$
By the results of Section \ref{ssexis} (see Theorem \ref{exth}) we know that the optimization problem
\be\label{etwo}
\min\Big\{\Phi\big(\lambda_1(\Omega),\lambda_2(\Omega)\big)\ :
\ \Omega\in\A(D),\ |\Omega|\le m\Big\},
\ee
without additional geometrical constraints on the admissible domains $\Omega$, admits a classical solution whenever the function $\Phi$ is increasing in each of its variables. The following very natural question then arises:
\medskip\\
\centerline{\it what happens when $\Phi$ does not satisfy the monotonicity condition above?}\medskip

In the rest of this section we suppose the bounding box $D$ is large enough to allow all the constructions that will be made. It is convenient to introduce the {\it attainable set}
$$\EE=\big\{(x,y)\in\R^2\ :\ x=\lambda_1(\Omega),\ y=\lambda_2(\Omega)\hbox{ for some }\Omega\in\A(D),\ |\Omega|\le m\big\},$$
so that the optimization problem \eqref{etwo} can be rewritten as
\be\label{ertwo}
\min\big\{\Phi(x,y)\ :\ (x,y)\in\EE\big\}.
\ee
The set $\EE$ is a subset of $\R^2$ which verifies the following properties.
\begin{enumerate}
\item Being the coordinates of points in $\EE$ the first and second eigenvalues of the Dirichlet Laplacian in some domain $\Omega$, we have that $x>0$ and $y>0$ for every $(x,y)\in\EE$.
\item Since $\lambda_2(\Omega)\ge\lambda_1(\Omega)$ we have that $x\le y$ for every $(x,y)\in\EE$.
\item Since the ball $B$ of measure $m$ makes $\lambda_1(\Omega)$ minimal, we have $x\ge\lambda_1(B)$ for every $(x,y)\in\EE$.
\item Since the union $A$ of two disjoint balls of measure $m/2$ each makes $\lambda_2(\Omega)$ minimal, we have $y\ge\lambda_2(A)$ for every $(x,y)\in\EE$.
\item Taking the domain $\Omega/t$ with $t\ge1$ and using the fact that $\lambda_k(\Omega/t)=t^2\lambda_k(\Omega)$, we have that the set $\EE$ is {\it conical}, i.e. if $(x,y)\in\EE$ then $(tx,ty)\in\EE$ for all $t\ge1$.
\item By a result obtained in \cite{asbe91} (proving a conjecture by Payne, P\'olya and Weinberger stated in \cite{ppw56}) the balls minimize the ratio $\lambda_1(\Omega)/\lambda_2(\Omega)$ among all domains $\Omega$, therefore $y\le x\lambda_2(B)/\lambda_1(B)$ for every $(x,y)\in\EE$.
\end{enumerate}
A numerical output of the set $\EE$ in the two-dimensional case has been obtained in \cite{woke94} and is reported below.
\begin{figure}[h]
\centerline{\includegraphics[width=9.0cm]{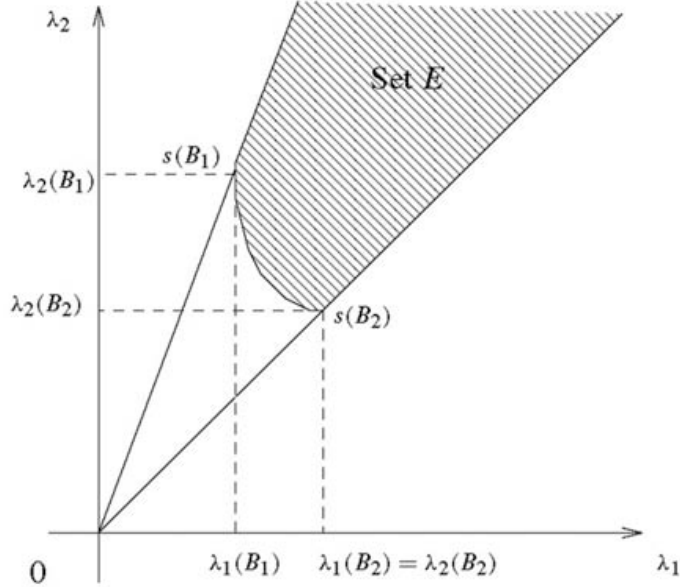}}
\caption{A numerical plot of the set $\EE$.}\label{fwoke}
\end{figure}

If the function $\Phi$ is lower semicontinuous on $\R^2$ and satisfies the coercivity condition
\be\label{ecoerc}
\lim_{|(x,y)|\to+\infty}\Phi(x,y)=+\infty
\ee
then the existence of a solution to problem \eqref{ertwo}, and then to problem \eqref{etwo}, follows straightforward by the Weierstrass theorem as soon as we can prove that the set $\EE$ is closed in $\R^2$.

\begin{prop}\label{pconvex}
If the set $\EE$ is convex, then it has to be closed.
\end{prop}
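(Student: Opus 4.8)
The plan is to prove the two inclusions; since $\EE\subseteq\overline\EE$ is trivial, everything reduces to showing $\overline\EE\subseteq\EE$, i.e. that every limit of attainable pairs is itself attained. I would combine a \emph{relaxation} step, which from any boundary point produces a genuinely attainable pair lying weakly below it, with a convex-geometric step that exploits the conical structure of $\EE$ to upgrade this domination into actual membership.

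\emph{Relaxation / domination step.} First I would take $(x,y)\in\overline\EE$ and pick $\Omega_n\in\A(D)$ with $|\Omega_n|\le m$ and $(\lambda_1(\Omega_n),\lambda_2(\Omega_n))\to(x,y)$. By the compactness of the $\gamma$-convergence on $\M_0(D)$ (Proposition \ref{pmeas}) one may assume $\Omega_n\to\mu$ for some $\mu\in\M_0(D)$, whence $\lambda_k(\mu)=\lim_n\lambda_k(\Omega_n)$ and $(\lambda_1(\mu),\lambda_2(\mu))=(x,y)$. Setting $\Omega=\{u_\mu>0\}$ with $u_\mu=\RR_\mu(1)$, this is a quasi-open set, and the $w\gamma$-lower semicontinuity of the Lebesgue measure (Proposition \ref{plebes}) gives $|\Omega|\le\liminf_n|\Omega_n|\le m$, so $\Omega\in\A(D)$ is admissible. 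Since $u_\mu=0$ q.e. on $D\setminus\Omega$, one has $\infty_{D\setminus\Omega}\le\mu$ (recall Example \ref{x29}(iii), where $\mu=+\infty$ wherever $u_\mu=0$), and the monotonicity of the eigenvalues in the capacitary measure, together with $\lambda_k(\infty_{D\setminus\Omega})=\lambda_k(\Omega)$, yields $\lambda_k(\Omega)\le\lambda_k(\mu)$. Hence $q:=(\lambda_1(\Omega),\lambda_2(\Omega))\in\EE$ with $q\le(x,y)$ componentwise: every point of $\overline\EE$ dominates, from below and to the left, a point of $\EE$.

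\emph{Convex-geometric step.} Next I would call $p\in\overline\EE$ \emph{minimal} if no $z\in\overline\EE$ with $z\le p$, $z\ne p$ exists. If $p$ is minimal, the domination step furnishes $q\in\EE\subseteq\overline\EE$ with $q\le p$, whence $q=p$ and $p\in\EE$: \emph{all minimal points of $\overline\EE$ belong to $\EE$.} Now I would use that $\overline\EE$ is convex and satisfies properties (1)--(6). By the conical property (5), $\overline\EE$ is stable under the dilations $z\mapsto tz$ with $t\ge1$, so each ray from the origin meets $\overline\EE$ (if at all) in a closed half-line $\{ru:r\ge\rho\}$ with $\rho>0$ (properties (3)--(4) keep $\overline\EE$ away from the origin), and its inner endpoint $\rho u$ lies on the lower-left boundary of the convex set $\overline\EE$. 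By convexity this boundary is non-increasing, and by properties (3)--(4) its only possible horizontal, resp. vertical, piece would sit at the minimal height $y=\lambda_2(A)$, resp. the minimal abscissa $x=\lambda_1(B)$, each attained at the single point coming from two balls, resp. from one ball; so the lower-left boundary has strictly negative slope and all its points are minimal. Thus every inner endpoint $\rho u$ is minimal, hence $\rho u\in\EE$, and the conical property of $\EE$ propagates membership to the entire half-line $\{ru:r\ge\rho\}$. As these half-lines exhaust $\overline\EE$, I obtain $\overline\EE\subseteq\EE$, i.e. $\EE$ is closed.

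The main obstacle is the passage from \emph{domination} to \emph{membership}: the relaxation step only ever yields a point of $\EE$ below $(x,y)$, never $(x,y)$ itself, so closedness cannot follow from the relaxation bound alone. This is exactly where convexity is indispensable, since through the conical property it forces the radially innermost points to coincide with the minimal points of $\overline\EE$, which are the only ones the relaxation argument controls. The delicate sub-point is excluding flat (horizontal or vertical) faces of the lower-left boundary, for which one must invoke the uniqueness of the extremal configurations in properties (3)--(4); the monotonicity $\lambda_k(\Omega)\le\lambda_k(\mu)$ under $\infty_{D\setminus\Omega}\le\mu$ is the remaining technical ingredient, but it is routine from the Rayleigh quotient characterization.
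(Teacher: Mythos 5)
Your relaxation/domination step is correct, and it is in substance the paper's own first step: the paper obtains the same dominating point more economically by applying Theorem \ref{exth} to the monotone cost $\big(\lambda_1(\Omega)-\bar x\big)^++\big(\lambda_2(\Omega)-\bar y\big)^+$, whose infimum is $0$ exactly because $(\bar x,\bar y)\in\overline\EE$; what you do by hand ($\gamma$-compactness of $\M_0(D)$, the quasi-open set $\{u_\mu>0\}$, Proposition \ref{plebes}, and the comparison $\lambda_k(\{u_\mu>0\})\le\lambda_k(\mu)$) is precisely the mechanism inside the proof of Theorem \ref{exth}, at the same level of rigor as the paper's sketch.

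The convex-geometric step, however, contains a genuine gap. Everything hinges on the claim that every radially innermost point of $\overline\EE$ is componentwise minimal, and this does not follow from properties (1)--(6) together with convexity and conicity, contrary to your appeal to ``properties (3)--(4)'', which are pure inequalities with no uniqueness content. Concretely, write $P_B=\big(\lambda_1(B),\lambda_2(B)\big)$, $P_A=\big(\lambda_1(\tilde B),\lambda_2(\tilde B)\big)$ ($B$ the ball of measure $m$, $\tilde B$ the two equal balls), $Q=\big(\lambda_1(B),\lambda_2(\tilde B)\big)$, and let $C$ be the closed conical convex hull of $\{P_B,P_A,Q\}$: this $C$ satisfies all of (1)--(6) and is convex, closed and conical, yet its unique componentwise minimal point is the corner $Q$, while the whole vertical segment $[Q,P_B]$ and horizontal segment $[Q,P_A]$ consist of radially innermost points; for such a set your argument places none of the inner endpoints except $Q$ in $\EE$. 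To exclude this shape for the true $\overline\EE$ you must show that the minimal abscissa and the minimal height are attained in $\overline\EE$ only at $P_B$, resp.\ $P_A$; that requires the uniqueness statements in the Faber--Krahn and Krahn--Szeg\H{o} inequalities, transported from $\EE$ to $\overline\EE$ through your domination step together with properties (2) and (6) passed to the closure. You flag this as ``the delicate sub-point'' but never carry it out; a second assertion of the same kind left unproved is that inner endpoints lie on the lower-left boundary at all, which needs the representation of $\overline\EE$ as $\big\{(x,y):\ x\ge\lambda_1(B),\ g(x)\le y\le kx\big\}$ with $g$ convex, obtained from conicity applied to $P_B$ plus convexity. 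Both holes are fillable, so your strategy can be completed, but it then uses strictly more than the paper does. The paper's proof needs no uniqueness whatsoever: convexity puts the two segments $[q,P_B]$ and $[q,P_A]$ inside $\EE$, these sweep every direction between the lines $y=x$ and $y=kx$, so the ray through $(\bar x,\bar y)$ meets one of them at some point $z$; if one had $(\bar x,\bar y)=\alpha z$ with $\alpha<1$, writing $z=(1-s)q+sP_B$ and using $(\bar x,\bar y)\ge q$ would force $P_B\ge cq$ componentwise for some $c>1$, contradicting $q_1\ge\lambda_1(B)$ (property (3)); symmetrically on $[q,P_A]$ with property (4). Hence $(\bar x,\bar y)$ is a dilation by some $t\ge1$ of a point of $\EE$, and conicity (5) concludes.
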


\begin{proof}
Let $(\bar x,\bar y)\in\overline\EE$ and consider the minimization problem
$$\min\big\{(x-\bar x)^++(y-\bar y)^+\ :\ (x,y)\in\EE\big\},$$
which is equivalent to
$$\min\big\{\big(\lambda_1(\Omega)-\bar x\big)^++\big(\lambda_2(\Omega)-\bar y\big)^+\ :\ \Omega\in\A(D),\ |\Omega|\le m\big\}.$$
Since the function $(x-\bar x)^++(y-\bar y)^+$ is increasing in each of its variables, by the existence Theorem \ref{exth} the minimum problems above admit optimal solutions $(x_{opt},y_{opt})\in\EE$ and $\Omega_{opt}\in\A(D)$ respectively. Since $(\bar x,\bar y)\in\overline\EE$ it is also clear that the minimum value has to be zero, so that
$$x_{opt}\le\bar x,\qquad y_{opt}\le\bar y.$$
If we are assuming that the set $\EE$ is convex, the segment joining $(x_{opt},y_{opt})$ to the point of $\EE$ given by $\big(\lambda_1(B),\lambda_2(B)\big)$, where $B$ is the ball of measure $m$, is all contained in $\EE$, as well as the segment joining $(x_{opt},y_{opt})$ to the point of $\EE$ given by $\big(\lambda_1(\tilde B),\lambda_2(\tilde B)\big)$, where $\tilde B$ is the union of two disjoint balls of measure $m/2$ each. By the conicity property (v) above of the set $\EE$ the point $(\bar x,\bar y)$ has to belong to $\EE$ thus proving that $\EE$ is closed.
\end{proof}

Unfortunately, in spite of the numerical evidence of the convexity of $\EE$ provided by Figure \ref{fwoke}, a proof of the convexity of $\EE$ is still missing, so the result of Proposition \ref{pconvex} above is useless to deduce the closedness of $\EE$ and then the existence of optimal domains for the minimization problem \eqref{etwo}. However, a result stronger than the one of Proposition \ref{pconvex} holds.

\begin{defi}\label{dhorver}
We say that the set $\EE$ is:
\begin{itemize}
\item horizontally convex if for every point $(x_0,y_0)\in\EE$ the horizontal segment joining $(x_0,y_0)$ to the straight line $y=x$ is all contained in $\EE$;
\item vertically convex if for every point $(x_0,y_0)\in\EE$ the vertical segment joining $(x_0,y_0)$ to the straight line $y=kx$, with $k=\lambda_2(B)/\lambda_1(B)$ being $B$ any ball of measure $m$, is all contained in $\EE$.
\end{itemize}
\end{defi}

Of course, if $\EE$ is convex it is also horizontally convex and vertically convex, while the converse could not in principle be true. It happens that horizontal and vertical convexity are still enough to prove that $\EE$ is closed.

\begin{prop}\label{phorver}
If the set $\EE$ is both horizontally convex and vertically convex, then it has to be closed.
\end{prop}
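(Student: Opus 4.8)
The plan is to follow the scheme of Proposition \ref{pconvex}, replacing the single use of full convexity by a combination of horizontal convexity, vertical convexity (Definition \ref{dhorver}), and the conicity property (v) of $\EE$. I fix an arbitrary point $(\bar x,\bar y)\in\overline\EE$ and aim to show it already belongs to $\EE$.

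First I would reproduce the opening of Proposition \ref{pconvex}: minimize the cost $(x-\bar x)^++(y-\bar y)^+$ over $\EE$. Since this function is continuous and nondecreasing in each variable, the shape functional $F(\Omega)=\big(\lambda_1(\Omega)-\bar x\big)^++\big(\lambda_2(\Omega)-\bar y\big)^+$ is $\gamma$-lower semicontinuous and decreasing for set inclusion, so Theorem \ref{exth} produces a minimizer, i.e. a point $(x_{opt},y_{opt})\in\EE$. Because $(\bar x,\bar y)$ is a limit of points of $\EE$, the minimum value is $0$, whence $x_{opt}\le\bar x$ and $y_{opt}\le\bar y$. I also record that, as limits of the inequalities (ii) and (vi) valid on $\EE$, the point $(\bar x,\bar y)$ satisfies $\bar x\le\bar y$ and $\bar y\le k\bar x$, where $k=\lambda_2(B)/\lambda_1(B)$.

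Next I would connect $(x_{opt},y_{opt})$ to $(\bar x,\bar y)$ using only three admissible moves inside $\EE$: scaling up along a ray by any factor $t\ge1$ (conicity), moving rightward at fixed height up to the diagonal $y=x$ (horizontal convexity), and moving upward at fixed abscissa up to the line $y=kx$ (vertical convexity). Set $t_1=\bar x/x_{opt}$ and $t_2=\bar y/y_{opt}$, both $\ge1$ since every point of $\EE$ has strictly positive coordinates. If $t_1\le t_2$, I scale $(x_{opt},y_{opt})$ by $t_1$ to reach $(\bar x,t_1y_{opt})\in\EE$, noting that $t_1\le t_2$ is exactly $t_1y_{opt}\le\bar y$; since also $\bar y\le k\bar x$, vertical convexity at $(\bar x,t_1y_{opt})$ furnishes the whole segment up to $(\bar x,k\bar x)$, which contains $(\bar x,\bar y)$. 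Symmetrically, if $t_2\le t_1$, I scale by $t_2$ to reach $(t_2x_{opt},\bar y)\in\EE$ with $t_2x_{opt}\le\bar x$; since $\bar x\le\bar y$, horizontal convexity at $(t_2x_{opt},\bar y)$ furnishes the segment rightward to $(\bar y,\bar y)$, which contains $(\bar x,\bar y)$. In either case $(\bar x,\bar y)\in\EE$, so $\overline\EE\subset\EE$ and $\EE$ is closed.

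The routine part is the existence of $(x_{opt},y_{opt})$ together with $x_{opt}\le\bar x$ and $y_{opt}\le\bar y$, which is verbatim the argument of Proposition \ref{pconvex}. The step needing care is the second one: one must use conicity to raise \emph{exactly one} coordinate to its target value without overshooting the other, and the sign of $t_1-t_2$ dictates which coordinate to fix first and which convexity to invoke. Scaling by $\min\{t_1,t_2\}$ is precisely what keeps the remaining coordinate below its target, so that the final move is a genuine monotone horizontal or vertical translation within $\EE$, rather than an inadmissible move away from the diagonal $y=x$ or from the line $y=kx$.
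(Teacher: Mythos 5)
Your proof is correct and takes essentially the same approach as the paper: the paper's own proof of Proposition \ref{phorver} consists of the single instruction to repeat the argument of Proposition \ref{pconvex}, replacing convexity by horizontal and vertical convexity, which is exactly what you do. Your case analysis (scaling $(x_{opt},y_{opt})$ by $\min\{t_1,t_2\}$ so as not to overshoot, then applying the appropriate one of the two convexity properties together with the limiting inequalities $\bar x\le\bar y\le k\bar x$) is a careful and accurate fill-in of the details the paper leaves implicit.
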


\begin{proof}
It is enough to repeat the proof of Proposition \ref{pconvex} and to conclude that $(\bar x,\bar y)\in\EE$ by using horizontal and vertical convexity instead of convexity.
\end{proof}

While the convexity of the set $\EE$ is still unproved, the horizontal and vertical convexity can be showed; we give a sketch of the proof omitting some technical details. A complete proof can be found in \cite{bubufi99}.

\proof[Proof of horizontal convexity]
We have to show that, given a domain $\Omega\in\A(D)$ with $|\Omega|\le m$, we can construct domains $\Omega(t)\in\A(D)$ with $|\Omega(t)|\le m$ such that
\be\label{ehorcon}
\lambda_2(\Omega(t))=\lambda_2(\Omega),\qquad\lambda_1(\Omega(t))=t\lambda_2(\Omega)+(1-t)\lambda_1(\Omega),\qquad\forall t\in[0,1].
\ee
Assume that $\Omega$ is a regular set; if we denote by $u$ a second eigenfunction for $\Omega$ and by $S$ the corresponding {\it nodal set}, i.e. the set $\{u=0\}$, we may continuously shrink $S$, obtaining then for $t\in[0,1]$ sets $S(t)\subset S$ continuously shrinking from $S$ to $\emptyset$. The sets $\Omega(t)=\Omega\setminus S(1-t)$ then $\gamma$-continuously move from $\Omega$ to $\Omega\setminus S$. Since $S$ is a nodal set we have
$$\lambda_2\big(\Omega(t)\big)=\lambda_2(\Omega)\qquad\forall t\in[0,1].$$
On the other hand, since $\Omega(t)$ $\gamma$-continuously decreases from $\Omega$ to $\Omega\setminus S$, we have that $\lambda_1\big(\Omega(t)\big)$ continuously increases from $\lambda_1(\Omega)$ to $\lambda_1(\Omega\setminus S)=\lambda_2(\Omega)$.\\
A reparametrization of the map $t\mapsto\Omega(t)$ now gives the required property \eqref{ehorcon}.
\endproof

A more careful analysis is made in \cite{bubufi99} where the following result is shown. The application to horizontal convexity is then as above, with $\Omega_0=\Omega$ and $\Omega_1=\Omega\setminus S$.

\begin{prop}\label{phomotopy}
Let $\Omega_1\subset\Omega_0$ be two quasi-open sets. Then there exists a decreasing homotopy from $\Omega_0$ to $\Omega_1$ which is $\gamma$-continuous, namely there exists a $\gamma$-continuous mapping $h:[0,1]\to\A(D)$ such that
$$h(t)\subset h(s)\hbox{ for }s<t,\quad h(0)=\Omega_0,\quad h(1)=\Omega_1.$$
\end{prop}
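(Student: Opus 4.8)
The plan is to exhibit an explicit decreasing family and verify its $\gamma$-continuity through two auxiliary facts about monotone sequences. First I would record the backbone: if $(A_n)$ is an increasing sequence of quasi-open sets then $A_n$ $\gamma$-converges to $\bigcup_n A_n$, and if $(A_n)$ is decreasing then $A_n$ $\gamma$-converges to $\bigcap_n A_n$. Both are proved the same way: the torsion functions $u_{A_n,1}=\RR_{A_n}(1)$ are monotone and bounded in $H^1_0(D)$, hence converge strongly in $L^2(D)$ and weakly in $H^1_0(D)$ to some $w$; one checks that $w\in H^1_0(A_\infty)$, with $A_\infty$ the union resp.\ the intersection, and that $w$ solves the torsion equation there by passing to the limit in $\int_D\nabla u_{A_n,1}\cdot\nabla v\,dx=\int_D v\,dx$ against test functions $v$ lying in every $H^1_0(A_n)$, so that $w=u_{A_\infty,1}$. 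Recalling from Remark~\ref{rgamma}(ii) that $d_\gamma(\Omega,\Omega')=\|u_{\Omega,1}-u_{\Omega',1}\|_{L^2(D)}$, it then suffices to produce a decreasing family $h$ for which $t\mapsto u_{h(t),1}$ is $L^2$-continuous.

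Next I would build the family from a single torsion function adapted to the \emph{collar} $\Omega_0\setminus\Omega_1$. Consider the closed subspace $V=\{u\in H^1_0(\Omega_0):u=0\hbox{ q.e.\ on }\Omega_1\}$ of $H^1_0(D)$ and let $g\in V$ solve $\int_D\nabla g\cdot\nabla v\,dx=\int_D v\,dx$ for all $v\in V$. Then $g\ge0$, $g=0$ q.e.\ on $\Omega_1\cup(D\setminus\Omega_0)$, and $g$ is superharmonic with $-\Delta g=1$ on the free region; by the strong minimum principle $g$ cannot vanish on a set of positive capacity inside $\Omega_0\setminus\Omega_1$, so $\{g>0\}=\Omega_0\setminus\Omega_1$ q.e. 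I would then set, for $s\in[0,\sigma]$ with $\sigma=\|g\|_{L^\infty(D)}$,
\be
h(s)=\Omega_1\cup\{g>s\},
\ee
a decreasing family of quasi-open sets with $h(0)=\Omega_1\cup\{g>0\}=\Omega_0$ and $h(\sigma^-)=\Omega_1$. The two endpoints are exactly the monotone limits handled above: as $s\downarrow0$ the sets increase to $\Omega_0$ and as $s\uparrow\sigma$ they decrease to $\Omega_1$, so $u_{h(s),1}$ is continuous at the endpoints by the backbone.

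For the interior continuity I would argue that no jumps occur. By monotone $\gamma$-convergence the left and right $L^2$-limits of $u_{h(s),1}$ at a level $s_0\in(0,\sigma)$ are $u_{\Omega_1\cup\{g\ge s_0\},1}$ and $u_{\Omega_1\cup\{g>s_0\},1}$ respectively, so a discontinuity can only come from the level set $\{g=s_0\}$ enlarging the Sobolev space. Since $-\Delta g=1$ on the free region, $g$ is nowhere locally constant there, hence $\{g=s_0\}$ has empty fine interior and zero Lebesgue measure; being a one-sided boundary set approached from the region $\{g<s_0\}$ on which the competing functions must already vanish q.e., it is removable for $H^1_0$, i.e. $H^1_0(\Omega_1\cup\{g>s_0\})=H^1_0(\Omega_1\cup\{g\ge s_0\})$. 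Thus the two one-sided limits coincide and $s\mapsto u_{h(s),1}$ is continuous. A final affine reparametrization of $[0,\sigma]$ onto $[0,1]$ produces the required $\gamma$-continuous decreasing homotopy $h:[0,1]\to\A(D)$.

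The main obstacle is precisely this removability step: justifying rigorously, in the fine/quasi-open category rather than for smooth domains, that passing from $\{g>s_0\}$ to $\{g\ge s_0\}$ leaves $H^1_0$ unchanged. The clean route combines the strong minimum principle for the superharmonic function $g$ (to exclude fat plateaus, so that level sets have empty fine interior and vanishing measure) with a one-sidedness argument showing that a function vanishing q.e.\ on $\{g<s_0\}$ cannot be supported on the boundary level $\{g=s_0\}$. Should this be delicate for arbitrary quasi-open $\Omega_0,\Omega_1$, the fallback is softer: the scalar map $s\mapsto\int_D u_{h(s),1}^2\,dx$ is monotone, hence continuous off an at most countable set, and at each continuity point the lattice inequality $u_{h(s^-),1}\ge u_{h(s),1}\ge0$ together with equality of $L^2$-norms forces equality; the remaining countably many jumps are then bridged by inserting, at each bad level, the analogous superlevel-set family of a collar torsion function between $\{g>s_0\}$ and $\{g\ge s_0\}$ and rescaling time. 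The full technical treatment is carried out in \cite{bubufi99}.
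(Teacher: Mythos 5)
The paper never proves Proposition~\ref{phomotopy}: it is quoted from \cite{bubufi99}, so your argument has to stand on its own, and it has a fatal gap at the very first step. The identity $\{g>0\}=\Omega_0\setminus\Omega_1$ q.e.\ is false, because your construction only sees Lebesgue measure, while the proposition is genuinely about capacity. Take $d=2$, $\Omega_0=D$ the unit disk, and $\Omega_1=D\setminus S$ with $S\subset D$ a closed line segment, so that $|S|=0$ but $\Cp(S)>0$ (sets of capacity zero have Hausdorff dimension at most $d-2$, cf.\ Example~\ref{emeas}). Every $u\in V=\{u\in H^1_0(\Omega_0):u=0\ \hbox{q.e.\ on }\Omega_1\}$ vanishes a.e.\ on $\Omega_1$, hence a.e.\ on $D$ because $|D\setminus\Omega_1|=|S|=0$; thus $V=\{0\}$ and $g\equiv0$. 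Your homotopy is then constantly equal to $\Omega_1$, and $h(0)=\Omega_1\neq\Omega_0$ in the $\gamma$-sense: since $\Cp(S)>0$ we have $H^1_0(\Omega_1)\subsetneq H^1_0(D)$, and the torsion functions differ (that of $D$ is continuous and strictly positive on $S$, so it cannot vanish q.e.\ there). The path never even leaves $\Omega_1$. The auxiliary justification collapses for the same reason: $\Omega_0\setminus\Omega_1$ is only the intersection of a quasi-open with a quasi-closed set, it has empty interior in general, so there is no open region where $-\Delta g=1$ holds pointwise and no strong minimum principle or unique-continuation property to invoke.

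The same measure-versus-capacity blindness undoes the remaining steps, so this is structural, not a fixable technicality. For interior continuity you would need that q.e.\ point of $\{g=s_0\}\setminus\Omega_1$ is a fine limit point of $\{g<s_0\}\setminus\Omega_1$, and this ``one-sidedness'' can genuinely fail for level-set families of this kind: with $\Omega_0=D$ the unit disk, $w$ its (radial) torsion function and $\Omega_1=D\setminus C_\rho$ with $C_\rho$ a concentric circle of level $t_0$, one has $\Omega_1\cup\{w>t\}=D$ for every $t<t_0$ but $\Omega_1\cup\{w>t_0\}=\Omega_1$, a genuine $\gamma$-jump, because the level set $C_\rho$ is shielded from below by $\Omega_1$. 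Your fallback is circular: at a jump level the left $\gamma$-limit and the right value are two \emph{nested quasi-open sets} whose difference lies in $\{g=s_0\}$, a Lebesgue-null set; joining them by a decreasing $\gamma$-continuous path is exactly the statement to be proved, and the ``collar torsion function'' of such a pair is identically zero by the computation above, so the inserted family is constant and bridges nothing. (The countable-discontinuity observation itself is sound; it is the bridging that fails.) A correct argument must remove $\Omega_0\setminus\Omega_1$ by a capacitary, geometric mechanism rather than by solving PDEs on the collar; one workable route, in the spirit of \cite{bubufi99}, is sweeping: set $h_r=\Omega_0\setminus\big((\Omega_0\setminus\Omega_1)\cap\overline B_r(x_0)\big)$, which is $\gamma$-continuous from the right, has at most countably many left-jumps (by monotonicity of the torsion functions), each jump being supported on the sphere $\partial B_r(x_0)$; these jumps are then bridged by a second sweeping centered at some $x_1\neq x_0$, which is everywhere $\gamma$-continuous because two distinct spheres meet in a set of finite $\HH^{d-2}$ measure, hence of capacity zero.
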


An important tool that we shall use in the proof of horizontal convexity is the so-called continuous Steiner symmetrization (see \cite{bro95, bro00}). Roughly speaking it consists in a path $t\mapsto\Omega_t$ starting at $t=0$ from any domain $\Omega\in\A(D)$ and symmetrizing it more and more, finishing at $t=1$ in a ball of the same measure. The crucial fact is that during the evolution the first eigenvalue $\lambda_1(\Omega_t)$ decreases, as a consequence of the fact that $\Omega_t$ is more and more symmetric.

\proof[Proof of horizontal convexity]
We have to show that, given a domain $\Omega\in\A(D)$ with $|\Omega|\le m$, we can construct domains $\Omega_t\in\A(D)$ with $|\Omega_t|\le m$ such that
\be\label{evercon}
\lambda_1(\Omega_t)=\lambda_1(\Omega),\qquad\lambda_1(\Omega_t)=t\lambda_2(B)+(1-t)\lambda_2(\Omega),\qquad\forall t\in[0,1]
\ee
where, again, $B$ denotes the ball of measure $m$. If the continuous Steiner symmetrization $\Omega_t$ was $\gamma$-continuous, the proof could be easily achieved because the path
$$x(t)=\lambda_1(\Omega_t),\qquad y(y)=\lambda_2(\Omega_t)$$
would join continuously the point $\big(\lambda_1(\Omega),\lambda_2(\Omega)\big)$ to the point $\big(\lambda_1(B),\lambda_2(B)\big)$ with $x(t)$ descreasing, and this, together with the conicity property (v) of the set $\EE$ would allow us to conclude that for every $t\in[0,1]$ there is a set $\Omega_t$ verifying the property \eqref{evercon}. Actually, the continuous Steiner symmetrization path $\Omega_t$ provides a mapping $t\mapsto\lambda_2(\Omega_t)$ which is only lower semicontinuous on the left and upper semicontinuous on the right, but this is still enough to achieve the proof, using again the conicity property (v) of the set $\EE$.
\endproof

Summarizing, we have obtained the following existence result

\begin{teor}\label{texnoconv}
If $D$ is large enough, and $\Phi$ is lower semicontinuous and verifies the coercivity condition \eqref{ecoerc}, then the shape optimization problem \eqref{etwo} admits a solution.
\end{teor}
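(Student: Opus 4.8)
The plan is to reduce existence for \eqref{etwo} to a direct-method argument on the attainable set $\EE$, leaning on the structural facts already obtained for $\EE$. First I would rewrite the problem in the equivalent form \eqref{ertwo}, namely $\min\{\Phi(x,y):(x,y)\in\EE\}$. By the definition of $\EE$, any minimizer $(\bar x,\bar y)\in\EE$ of \eqref{ertwo} is realized by some $\Omega\in\A(D)$ with $|\Omega|\le m$ satisfying $\lambda_1(\Omega)=\bar x$ and $\lambda_2(\Omega)=\bar y$, and such an $\Omega$ is automatically a solution of \eqref{etwo}. Hence it is enough to minimize $\Phi$ over $\EE$.

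The central point is the closedness of $\EE$. Here I would appeal to Proposition \ref{phorver}: since the horizontal and the vertical convexity of $\EE$ have been established---the former via the nodal-set shrinking together with the $\gamma$-continuous decreasing homotopy of Proposition \ref{phomotopy}, the latter via the continuous Steiner symmetrization---the set $\EE$ is both horizontally and vertically convex, and therefore closed. This is where the standing hypothesis that $D$ is large enough is used, as it guarantees that the ball $B$ of measure $m$, the configuration of two disjoint balls of measure $m/2$ each, and the symmetrization paths all remain inside $D$, so that the reference points and constructions underlying Proposition \ref{phorver} are available.

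With $\EE$ closed, the conclusion follows by the Weierstrass theorem. If $\inf_\EE\Phi=+\infty$ every point of $\EE$ is trivially optimal, so I would assume the infimum finite and choose a minimizing sequence $(x_n,y_n)\in\EE$. The coercivity condition \eqref{ecoerc} keeps this sequence bounded, since otherwise $\Phi(x_n,y_n)\to+\infty$ along a subsequence, contradicting minimality. A convergent subsequence then has a limit $(\bar x,\bar y)$, which lies in $\EE$ by closedness, and the lower semicontinuity of $\Phi$ gives $\Phi(\bar x,\bar y)\le\liminf_n\Phi(x_n,y_n)=\inf_\EE\Phi$. Thus $(\bar x,\bar y)$ minimizes $\Phi$ on $\EE$, and the associated admissible domain solves \eqref{etwo}.

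The genuinely hard analytic work---the horizontal and vertical convexity of $\EE$---has already been carried out in the preceding sketches and in \cite{bubufi99}, so the proof of this theorem is essentially a packaging step. The only mild points requiring care will be the realization of the attainable-set minimizer by an admissible domain in the reduction to \eqref{ertwo} and the degenerate case $\inf_\EE\Phi=+\infty$; neither is a genuine obstacle.
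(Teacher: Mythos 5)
Your proposal is correct and follows essentially the same route as the paper: reduce \eqref{etwo} to \eqref{ertwo} on the attainable set $\EE$, deduce closedness of $\EE$ from Proposition \ref{phorver} via the horizontal and vertical convexity established by the nodal-set/homotopy and continuous Steiner symmetrization arguments, and conclude by the Weierstrass theorem using the coercivity \eqref{ecoerc} and lower semicontinuity of $\Phi$. The extra details you supply (boundedness of the minimizing sequence, the degenerate case $\inf_\EE\Phi=+\infty$, and realization of the minimizing pair by an admissible domain) are exactly the routine steps the paper leaves implicit in its ``follows straightforward by the Weierstrass theorem'' remark.
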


In the proof, several properties of the first two eigenvalues of the Laplace operator have been used; we do not know if more general problems of the form
$$\min\Big\{\Phi\big(\lambda_{i_1}(\Omega),\dots,\lambda_{i_k}(\Omega)\big)\ :\ \Omega\in\A(D),\ |\Omega|\le m\Big\}$$
still admit an optimal domain solution, without monotonicity assumptions on the function $\Phi$, for different choices of the indices $i_1,\dots,i_k$.

\section{Optimal partition problems}\label{spart}

\subsection{Existence of optimal partitions}\label{ssoptpar}

In this section we consider the minimization problem for shape cost functionals of the form
$$F(\Omega_1,\dots\Omega_N)$$
where $N$ is a fixed integer, the unknown domains $\Omega_i\in\A(D)$, the cost is a map $F:\A(D)^N\to[0,+\infty]$ and the domains $\Omega_i$ fulfill the condition
$$\Omega_i\cap\Omega_j=\emptyset\qquad\hbox{for }i\ne j.$$
A family $\{\Omega_1,\dots,\Omega_N\}$ of pairwise disjoint subsets of $D$ will be called a {\it partition}. The minimization problem we consider is then
\be\label{epart}
\min\big\{F(\Omega_1,\dots\Omega_N)\ :\ \Omega_j\in\A(D),\ \Omega_i\cap\Omega_j=\emptyset\hbox{ for }i\ne j\big\}.
\ee

As in the case of a single domain $\Omega$ we may consider several interesting cases of partition cost functionals $F$:
\begin{enumerate}
\item{\it integral costs} given by
$$F(\Omega_1,\dots\Omega_N)=\int_Dj\big(x,\RR_{\Omega_1}(f_1),\dots,\RR_{\Omega_N}(f_N)\big)\,dx$$
where $f_i\in L^2(D)$ are given and as usual the solutions $\RR_{\Omega_i}(f_i)$ of
$$-\Delta u=f_i\hbox{ on }\Omega_i\qquad u\in H^1_0(\Omega_i)$$
are intended as extended by zero outside $\Omega_i$
\item{\it spectral costs} given by
$$F(\Omega_1,\dots\Omega_N)=\Phi\big(\lambda(\Omega_1),\dots,\lambda(\Omega_N)\big)$$
where we denoted by $\lambda(\Omega)$ the spectrum of the Dirichlet Laplacian in $\Omega$.
\end{enumerate}

By the same methods used in Section \ref{ssnece} we may obtain some necessary conditions of optimality for the case of integral costs (i) above. Assuming that an optimal partition $(\Omega_1,\dots,\Omega_N)$ exists and is made of smooth domains we have
\be\label{eoptpart}
\left\{\begin{array}{ll}
u_iv_i\le0&\hbox{q.e. on }\Omega_i;\\
f_i(x)j_{s_i}(x,0,\dots,0)\ge0&\hbox{a.e. on }D\setminus\Omega_i;\\
(\partial u_i/\partial n)(\partial v_i/\partial n)=0
&\HH^{N-1}\hbox{-a.e. on }D\cap\big(\partial\Omega_i\setminus\bigcup_{j\ne i}\partial\Omega_j\big)
\end{array}\right.
\end{equation}
for all $i=1,\dots,N$, where we denoted by $u_i$ and $v_i$ the solutions of the state and adjoint state equation
$$-\Delta u=f_i\hbox{ on }\Omega_i\qquad u\in H^1_0(\Omega_i)$$
and
$$-\Delta v=j_{s_i}(x,u_1,\dots,u_N)\hbox{ on }\Omega_i\qquad v\in H^1_0(\Omega_i)$$
respectively.

In general, without extra conditions either on the admissible partitions or on the cost functional $F$, we could reproduce counterexamples to the existence of optimal partitions similar to the ones seen in Section \ref{ssnonex}, and optimal solutions only exist in a relaxed sense. For optimal partition problems the relaxed formulation will be considered later in Section \ref{ssrelpar}; here we focus our attention on some monotonicity assumptions on the cost $F$ that will imply the existence of {\it classical solutions} to problem \eqref{epart}.

\begin{defi}\label{dmonpar}
We say that $F:\A(D)^N\to[0,+\infty]$ is:
\begin{enumerate}
\item $\gamma$-lower semicontinuous if
$$F(\Omega_1,\dots,\Omega_N)\le\liminf_{n\to+\infty}F(\Omega_1^n,\dots,\Omega_N^n)$$
whenever $\Omega_i^n\to\Omega_i$ in the $\gamma$-convergence, for $i=1,\dots,N$;
\item $w\gamma$-lower semicontinuous if
$$F(\Omega_1,\dots,\Omega_N)\le\liminf_{n\to+\infty}F(\Omega_1^n,\dots,\Omega_N^n)$$
whenever $\Omega_i^n\to\Omega_i$ in the $w\gamma$-convergence, for $i=1,\dots,N$;
\item monotonically decreasing in the sense of the set inclusion if for all $(A_1,\dots,A_N)$, $(B_1,\dots,B_N)\in\A(D)^N$ such that $A_i\subset B_i$ for all $i=1,\dots,N$ in the sense of capacity, i.e. $\Cp(A_i\setminus B_i)=0$, then
$$F(B_1,\dots,B_N)\le F(A_1,\dots,A_N).$$
\end{enumerate}
\end{defi}

The following existence result for optimal partition problems with monotonically decreasing shape cost functional has been obtained in \cite{bbh98}.

\begin{teor}\label{texpar}
Let $F:\A(D)^N\to[0,+\infty]$ be a shape cost functional which is $w\gamma$-lower semicontinuous. Then the optimal partition problem \eqref{epart} admits a classical solution. This happens for instance if $F$ is $\gamma$-lower semicontinuous and monotonically decreasing in the sense of the set inclusion.
\end{teor}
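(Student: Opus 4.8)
The plan is to use the direct method together with the compactness of the $w\gamma$-convergence established earlier. Let $(\Omega_1^n,\dots,\Omega_N^n)$ be a minimizing sequence of admissible partitions for \eqref{epart}, so that $\Cp(\Omega_i^n\cap\Omega_j^n)=0$ for $i\ne j$ and $F(\Omega_1^n,\dots,\Omega_N^n)\to\inf$. For each fixed index $i$ the torsion functions $w_{\Omega_i^n}=\RR_{\Omega_i^n}(1)$ are bounded in $H^1_0(D)$ (indeed $0\le w_{\Omega_i^n}\le\RR_D(1)$), so by the compactness of $w\gamma$ I would extract, successively in $i=1,\dots,N$ and then diagonally, a single subsequence (not relabelled) along which $w_{\Omega_i^n}\rightharpoonup w_i$ weakly in $H^1_0(D)$ for every $i$. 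Setting $\Omega_i=\{w_i>0\}$, which is quasi-open since $w_i\in H^1_0(D)$, gives $\Omega_i^n\to\Omega_i$ in the $w\gamma$-convergence for each $i$.

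The main obstacle is to check that the limit $(\Omega_1,\dots,\Omega_N)$ is still a partition, i.e.\ that disjointness survives the weak limit. Since $D$ is bounded the weak $H^1_0$-convergence upgrades to strong $L^2$-convergence, so after a further subsequence $w_{\Omega_i^n}\to w_i$ a.e.\ in $D$ for every $i$. For fixed $n$ the hypothesis $\Cp(\Omega_i^n\cap\Omega_j^n)=0$ forces $w_{\Omega_i^n}w_{\Omega_j^n}=0$ q.e., hence a.e.; passing to the a.e.\ limit yields $w_iw_j=0$ a.e.\ in $D$ whenever $i\ne j$. Now $w_iw_j$, being a bounded product of quasi-continuous functions, is itself quasi-continuous, and a quasi-continuous function that vanishes a.e.\ vanishes q.e.; therefore $w_iw_j=0$ q.e., which is exactly $\Cp(\Omega_i\cap\Omega_j)=\Cp(\{w_i>0\}\cap\{w_j>0\})=0$. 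Thus $(\Omega_1,\dots,\Omega_N)$ is an admissible partition, and the assumed $w\gamma$-lower semicontinuity gives $F(\Omega_1,\dots,\Omega_N)\le\liminf_n F(\Omega_1^n,\dots,\Omega_N^n)=\inf$, so the limit partition is optimal.

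It remains to justify the final sentence, namely that a $\gamma$-lower semicontinuous and monotonically decreasing $F$ is automatically $w\gamma$-lower semicontinuous; this is the $N$-component analogue of Proposition \ref{pmonot}. Given $\Omega_i^n\to\Omega_i$ in $w\gamma$ for every $i$, I would first pass to a subsequence realizing $\liminf_n F(\Omega_1^n,\dots,\Omega_N^n)$, and then apply Lemma \ref{lgm1} once for each index $i$ (extracting nested subsequences, which is harmless since $N$ is finite) to produce quasi-open sets $G_i^n\supset\Omega_i^n$ with $G_i^n\to\Omega_i$ in the $\gamma$-convergence. Monotonicity of $F$ gives $F(G_1^n,\dots,G_N^n)\le F(\Omega_1^n,\dots,\Omega_N^n)$, while $\gamma$-lower semicontinuity gives $F(\Omega_1,\dots,\Omega_N)\le\liminf_n F(G_1^n,\dots,G_N^n)$; chaining the two inequalities yields the desired $w\gamma$-lower semicontinuity and completes the reduction to the first part.
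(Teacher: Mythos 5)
Your proof is correct in substance and follows exactly the route the paper sets up: the paper itself does not prove Theorem \ref{texpar} (it defers to the reference \cite{bbh98}), but your argument is the natural $N$-component transplant of the paper's proof of Theorem \ref{exth} — compactness of the $w\gamma$-convergence for the direct method, plus Lemma \ref{lgm1} and the mechanism of Proposition \ref{pmonot} to reduce the monotone $\gamma$-l.s.c. case to the $w\gamma$-l.s.c. case — and this is also how the cited reference proceeds.

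One point deserves more care. Admissibility in \eqref{epart} requires $\Omega_i\cap\Omega_j=\emptyset$ literally, whereas your limit sets $\Omega_i=\{w_i>0\}$ are only shown to satisfy $\Cp(\Omega_i\cap\Omega_j)=0$, so as written the sentence ``thus $(\Omega_1,\dots,\Omega_N)$ is an admissible partition'' has a small gap. It is easily closed: put $Z=\bigcup_{i\ne j}(\Omega_i\cap\Omega_j)$, which has capacity zero, and replace each $w_i$ by the representative $w_i\,1_{D\setminus Z}$; modifying a quasi-continuous function on a set of capacity zero leaves it quasi-continuous, so this is again an admissible choice of quasi-continuous representative of the weak limit. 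The new positivity sets $\Omega_i'=\Omega_i\setminus Z$ are then quasi-open (quasi-openness is stable under removal of capacity-null sets), literally pairwise disjoint, and are still $w\gamma$-limits of $(\Omega_i^n)$ in the sense of Definition \ref{dwgamma}, since that definition determines the limit set only up to the choice of representative of $w$. With this modification your $w\gamma$-lower semicontinuity inequality applies verbatim to $(\Omega_1',\dots,\Omega_N')$ and the existence proof is complete. The remaining steps — the q.e.-to-a.e. passage for each fixed $n$, the a.e.-to-q.e. upgrade in the limit via quasi-continuity of the product $w_iw_j$, and the componentwise application of Lemma \ref{lgm1} with nested subsequence extraction (harmless since $N$ is finite) — are all sound.
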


For instance Theorem \ref{texpar} applies to the case
$$F(\Omega_1,\dots,\Omega_N)=\Phi\big(\lambda_{k_1}(\Omega_1),\dots,\lambda_{k_N}(\Omega_N)\big)$$
where $k_i\ge1$ are integers and $\Phi$ is increasing in each variable, i.e.
$$\Phi(s_1,\dots,s_N)\le\Phi(t_1,\dots,t_N)\qquad\hbox{whenever }s_i\le t_i,\ i=1,\dots,N.$$
Note that no measure constraints on the sets $\Omega_i$ are imposed; however, thanks to the $w\gamma$-lower semicontinuity result of Proposition \ref{plebes}, if $F:\A(D)^N\to[0,+\infty]$ is $w\gamma$-lower semicontinuous, the measure constrained minimization problem
$$\min\big\{F(\Omega_1,\dots\Omega_N)\ :\ \Omega_j\in\A(D),\ \Omega_i\cap\Omega_j=\emptyset\hbox{ for }i\ne j,\ |\Omega_i|\le m_i\big\}$$
still admits a classical solution.

A particularly interesting example is given by the shape cost functional
$$F(\Omega_1,\dots,\Omega_N)=\lambda_1(\Omega_1)+\dots+\lambda_1(\Omega_N).$$
In \cite{cali07,cali10} Caffarelli and Lin considered the equivalent variational formulation of the problem
$$\min\Big\{\int_D|\nabla u|^2\,dx\ :\ u\in H^1_0(D;\R^N),\ \int_Du_i^2\,dx=1\hbox{ for all }i,\ G(u)=0\Big\}$$
where
$$G(u)=\sum_{i\ne j}u_i^2u_j^2,$$
and showed the regularity of the free boundary of the optimal partition, up to a singular set of Hausdorff dimension less than or equal to $d-2$. They also formulated a very interesting conjecture about the behaviour of the optimal partitions when the number of parts $N$ tends to $+\infty$:

\medskip{\it in the two-dimensional case $d=2$ the optimal partitions $\{\Omega_i^{opt}\}$ tend, as $N\to+\infty$, to be made by a regular exagonal tiling; in particular, for the optimal value $P_N=\sum_{i=1}^N\lambda_1(\Omega_i^{opt})$ the estimate
$$N^{-2}P_N=\frac{\lambda_1(H)}{|D|}+o(1)$$
holds, where $H$ is a regular exagon of area equal to $1$.}\medskip

The numerical computations made in \cite{bbo09} confirm the conjecture, as Figure \ref{fbbo} shows.
\begin{figure}[h]
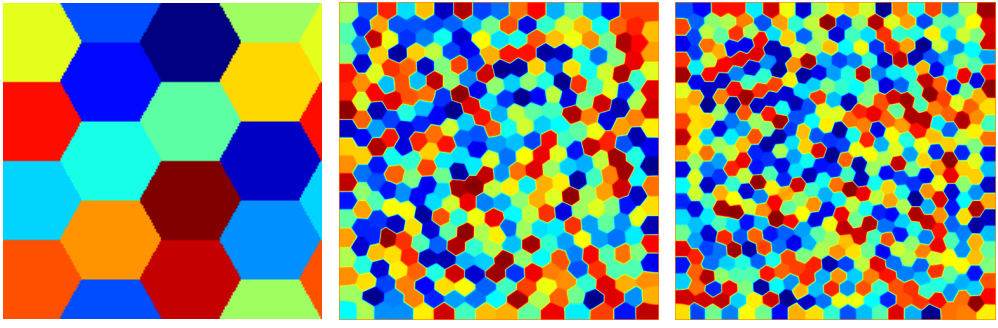

\centerline{
\includegraphics[width=4.3cm]{1}
\includegraphics[width=4.29cm]{2}
\includegraphics[width=4.3cm]{3}}
\caption{Optimal partitions for $\sum_{i=1}^N\lambda_1(\Omega_i)$ with $N=16$, $N=384$, $N=512$.}\label{fbbo}
\end{figure}

\subsection{Relaxed formulation of optimal partition problems}\label{ssrelpar}

For a general partition functional $F(\Omega_1,\dots,\Omega_N)$ which does not satisfy any monotonicity assumption we cannnot expect the existence of a classical solution to the optimization problem \eqref{epart}. Similarly to what happens in the case of a single domain, a relaxation procedure is needed to describe the behaviour of minimizing sequences, and this will be done through the use of the capacitary measures described in Sections \ref{ssmeas} and \ref{ssrelax}.

In order to characterize the expression of the relaxed problem associated to \eqref{epart} we consider, for every sequence $\{\Omega_1^n,\dots,\Omega_N^n\}$ of pairwise disjoint quasi-open subsets of $D$ the associated measures $\mu_i^n=\infty_{D\setminus\Omega_i^n}$ ($i=1,\dots,N$) of the class $\M_0(D)$ introduced in Section \ref{ssmeas}. Since $\M_0(D)$ is compact with respect to the $\gamma$-convergence, up to a subsequence, there exist $N$ measures $\mu_i\in\M_0(D)$ ($i=1,\dots,N$) such that $\mu_i$ is the $\gamma$-limit of $\mu_i^n$.

The fact that the original $\Omega_i^n$ were pairwise disjoint will imply some conditions on the limit measures $\mu_i$, that are not ``independent'', and characterize the relaxed optimization problem associated to \eqref{epart}. For instance, it is not possible to obtain an $N$-tuple made by all the measures $\mu_i$ equal to the Lebesgue measure on $D$.

For every capacitary measure $\mu\in\M_0(D)$ we denote by $\Omega_\mu$ the set ``of finiteness'' of $\mu$, more precisely defined as
$$\Omega_\mu=\big\{\RR_\mu(1)>0\big\}.$$
The following result has been proved in \cite{buti02}.

\begin{teor}\label{tbuti}
An $N$-tuple $(\mu_1,\dots,\mu_N)$ of capacitary measures is made by the $\gamma$-limit of pairwise disjoint quasi-open sets $\{\Omega_1^n,\dots,\Omega_N^n\}$ if and only if it satisfies the following property: 
$$\Cp(\Omega_{\mu_i}\cap\Omega_{\mu_j})=0\qquad\forall i\ne j.$$
\end{teor}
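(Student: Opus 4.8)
The statement is an equivalence, so the plan is to prove the two implications separately; the necessity is short and the sufficiency carries the real weight.

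\textbf{Necessity.} Suppose the $\mu_i$ are $\gamma$-limits of pairwise disjoint quasi-open sets $\Omega_i^n$, and set $\mu_i^n=\infty_{D\setminus\Omega_i^n}$, so that $w_i^n:=\RR_{\Omega_i^n}(1)=\RR_{\mu_i^n}(1)$. By the definition of $\gamma$-convergence on $\M_0(D)$ we have $w_i^n\to w_i:=\RR_{\mu_i}(1)$, and by Remark \ref{rgamma}(iv) this convergence is in fact strong in $H^1_0(D)$; hence, along a subsequence, $w_i^n\to w_i$ quasi-everywhere. Now $w_i^n\in H^1_0(\Omega_i^n)$ vanishes q.e.\ off $\Omega_i^n$, and since $\Cp(\Omega_i^n\cap\Omega_j^n)=0$ for $i\ne j$, at quasi-every point at least one of $w_i^n,w_j^n$ vanishes, so $w_i^n w_j^n=0$ q.e. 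Letting $n\to\infty$ gives $w_i w_j=0$ q.e., i.e.\ $\Cp(\{w_i>0\}\cap\{w_j>0\})=0$. As $\Omega_{\mu_i}=\{w_i>0\}$ by definition, this is exactly $\Cp(\Omega_{\mu_i}\cap\Omega_{\mu_j})=0$.

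\textbf{Sufficiency: reduction.} Conversely, assume $\Cp(\Omega_{\mu_i}\cap\Omega_{\mu_j})=0$ for all $i\ne j$. The idea is to approximate each $\mu_i$ separately, but with the extra requirement that the approximating quasi-open sets be contained, up to capacity, in $\Omega_{\mu_i}$. If this can be arranged, say $\Omega_i^n\subseteq\Omega_{\mu_i}$ q.e.\ with $\Omega_i^n\to\mu_i$ in the $\gamma$-convergence, then $\Cp(\Omega_i^n\cap\Omega_j^n)\le\Cp(\Omega_{\mu_i}\cap\Omega_{\mu_j})=0$, so the $\Omega_i^n$ are automatically pairwise disjoint and exhibit $(\mu_1,\dots,\mu_N)$ as the desired $\gamma$-limit of disjoint quasi-open sets.

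\textbf{Sufficiency: the localized approximation.} First I would record that $w_i=\RR_{\mu_i}(1)$ lies in $H^1_0(\Omega_{\mu_i})$ and that, by the maximum-principle identification of Example \ref{x29}(iii) (formally $\mu_i=(1+\Delta w_i)/w_i$, equal to $+\infty$ where $w_i=0$), the measure $\mu_i$ coincides with $+\infty$ q.e.\ on $D\setminus\Omega_{\mu_i}$; thus $\mu_i$ is genuinely a capacitary measure carried by $\Omega_{\mu_i}$. The key observation is that for any quasi-open $\Omega\subseteq\Omega_{\mu_i}$ the torsion function $\RR_\Omega(1)$ is independent of the ambient box, because $H^1_0(\Omega)$ does not depend on it; hence $\gamma$-convergence of subsets of $\Omega_{\mu_i}$ can be tested inside the box $\Omega_{\mu_i}$, where the density of quasi-open sets in $\M_0$ (Proposition \ref{pmeas}(ii)) furnishes $\Omega_i^n\subseteq\Omega_{\mu_i}$ with $\Omega_i^n\to\mu_i$.

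\textbf{The main obstacle.} I expect the difficulty to lie entirely in this support-constrained approximation, not in the necessity half. The global density theorem yields some $\omega_i^n\to\mu_i$ with no control on where $\omega_i^n$ sits, and naively truncating to $\omega_i^n\cap\Omega_{\mu_i}$ threatens to destroy the $\gamma$-limit. What should rescue the argument is that the mass of $\mu_i$ outside $\Omega_{\mu_i}$ is $+\infty$, hence inert: discarding the part of $\omega_i^n$ lying in $D\setminus\Omega_{\mu_i}$ ought not to change the limit. Making this rigorous requires either developing the relaxation and density theory with a quasi-open set as bounding box, or a comparison argument showing that $\RR_{\omega_i^n\cap\Omega_{\mu_i}}(1)$ and $\RR_{\omega_i^n}(1)$ share the same weak limit $w_i$, exploiting $\RR_{\omega_i^n}(1)\to w_i$ together with $w_i=0$ q.e.\ off $\Omega_{\mu_i}$. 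This comparison is where the real work sits.
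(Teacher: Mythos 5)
A preliminary remark: the paper does not prove Theorem \ref{tbuti} at all — it quotes it from \cite{buti02} — so your argument has to be judged on its own merits rather than against an in-paper proof. Your necessity half is correct. The only fragile point is the appeal to Remark \ref{rgamma}(iv), which the paper states for $\gamma$-convergence of quasi-open sets to a \emph{quasi-open} limit, whereas your limits are genuine capacitary measures; the strong $H^1_0(D)$ convergence does hold in that generality (it is in \cite{bubu05}), but you can bypass it entirely: weak $H^1_0(D)$ convergence gives $L^2(D)$ convergence, hence $w_i^nw_j^n\to w_iw_j$ a.e.\ along a subsequence, so $w_iw_j=0$ a.e.; since $w_iw_j$ is quasi-continuous and a quasi-continuous function vanishing a.e.\ vanishes q.e., this yields $\Cp(\Omega_{\mu_i}\cap\Omega_{\mu_j})=0$ without any strong convergence. (A cosmetic point in your reduction: containment and disjointness up to capacity give sets that are disjoint only up to capacity; removing from each $\Omega_i^n$ the capacity-null set $\bigcup_{j\ne i}(\Omega_i^n\cap\Omega_j^n)$ makes them literally disjoint without changing quasi-openness or the spaces $H^1_0$.)

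The sufficiency half has a genuine gap, and you have located it yourself: everything rests on the claim that each $\mu_i$ is the $\gamma$-limit of quasi-open sets contained in $\Omega_{\mu_i}$, and this claim is nowhere proved. It is not routine: Proposition \ref{pmeas}(ii) is a statement about a bounded \emph{open} box $D$, so ``testing $\gamma$-convergence inside the box $\Omega_{\mu_i}$'' presupposes a relaxation and density theory over a merely quasi-open ambient set which the paper does not supply; and, as you yourself observe, intersecting a global approximating sequence $\omega^n\to\mu_i$ with $\Omega_{\mu_i}$ could a priori change the limit. This localized density lemma \emph{is} the content of the ``if'' implication, so as written the proof is incomplete. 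Your comparison route can, however, be closed. Put $A^n=\omega^n\cap\Omega_{\mu_i}$ and $w_i=\RR_{\mu_i}(1)$. Since $A^n\subset\omega^n$, the $\Gamma$-liminf inequality against the functional of $\mu_i$ is free, so one only needs recovery sequences vanishing outside $\Omega_{\mu_i}$ for a dense class of $u\in X_{\mu_i}(D)$: take $u=\RR_{\mu_i}(f)$ with $f$ bounded (these are dense in $X_{\mu_i}(D)$), so that $|u|\le\|f\|_\infty w_i$ by the maximum principle; multiply a truncated recovery sequence $v_n\in H^1_0(\omega^n)$ by the cutoffs $\phi_k=\min\{kw_i,1\}$, which puts $v_n\phi_k$ in $H^1_0(A^n)$, and observe that the extra energy $\int v_n^2|\nabla\phi_k|^2\,dx$ is controlled, letting $n\to\infty$ and then $k\to\infty$, by $\|f\|_\infty^2\int_{\{0<w_i<1/k\}}|\nabla w_i|^2\,dx\to0$, precisely because of the pointwise bound $|u|\le\|f\|_\infty w_i$. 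A diagonal argument then gives $A^n\to\mu_i$ in the $\gamma$-convergence, and your reduction completes the proof; this missing lemma (essentially the key lemma of \cite{buti02}) is what your proposal still owes.
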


For instance, if
$$F(\Omega_1,\dots\Omega_N)=\int_Dj\big(x,\RR_{\Omega_1}(f_1),\dots,\RR_{\Omega_N}(f_N)\big)\,dx,$$
then the relaxed problem associated to
$$\min\big\{F(\Omega_1,\dots\Omega_N)\ :\ \Omega_i\in\A(D),\ \Omega_i\cap\Omega_j=\emptyset\hbox{ for }i\ne j\big\}$$
is given by
$$\min\big\{F(\mu_1,\dots\mu_N)\ :\ \mu_i\in\M_0(D),\ \Cp(\Omega_{\mu_i}\cap\Omega_{\mu_j})=0\hbox{ for }i\ne j\big\}$$
where
$$F(\mu_1,\dots\mu_N)=\int_Dj\big(x,\RR_{\mu_1}(f_1),\dots,\RR_{\mu_N}(f_N)\big)\,dx.$$

\section{Further problems}\label{sfurth}

\subsection{Cheeger-type problems}\label{sschee}

We consider here minimization problems that we may call of Cheeger type, for the similarity with the well-known Cheeger problem
$$\min\Big\{\frac{\per(\Omega)}{|\Omega|}\ :\ \Omega\subset D\Big\}.$$
It is possible to prove that for every bounded domain $D$ there exists an optimal Cheeger set $\Omega_{opt}$ and (see \cite{ac07,ccn07}) this set is unique and convex whenever $D$ is convex. Moreover, in this case the boundary $\partial\Omega_{opt}$ does not contain the points of $\partial D$ with too large mean curvature; more precisely, $\partial\Omega_{opt}$ coincides with $\partial D$ if and only if
$$\|H\|_{L^\infty(\partial D)}\le\frac{\lambda(D)}{d-1}$$
where $H(x)$ is the mean curvature of $\partial D$ at $x$ and $\lambda(D)$ is the minimal value of the Cheeger minimization problem above. Note that the two quantities $\per(\Omega)$ and $|\Omega|$ scale in a different way and, due to the isoperimetric inequality
$$\per(\Omega)|\Omega|^{(1-d)/d}\ge\per(B)|B|^{(1-d)/d}=d\omega_d^{1/d}$$
(where $\omega_d$ denotes the Lebesgue measure of the unit ball in $\R^d$), we have
$$\lim_{|\Omega|\to0}\frac{\per(\Omega)}{|\Omega|}=+\infty.$$
The same analysis occurs for the cost
$$|\Omega|^{-\alpha}\per(\Omega)\qquad\hbox{whenever }\alpha>\frac{d-1}{d}.$$

In this section we consider rescaled minimization problems of the form
\be\label{echee}
\min\big\{M(\Omega)J(\Omega)\ :\ \Omega\in\A(D)\big\}
\ee
where the mappings $M$ and $J$ fulfill some rather general assumptions related to the variational $\gamma$-convergence on the family $\A(D)$ of quasi-open subsets of $D$. These problems have been studied in \cite{bw10} to which we refer for all the details. On the mappings $M$ and $J$, from $\A(D)$ into $[0,+\infty]$ we assume
\begin{enumerate}
\item $M$ and $J$ are nonnegative, and $J(D)>0$;
\item $J$ is $\gamma$-l.s.c. and decreasing with respect to the set inclusion;
\item $M$ is $w\gamma$-lower semicontinuous;
\item the Cheeger scaling condition is verified:
$$\lim_{M(\Omega)\to0}M(\Omega)J(\Omega)=+\infty.$$
\end{enumerate}

With this last assumption we may define the cost $M(\Omega)J(\Omega)=+\infty$ whenever $M(\Omega)=0$ and obtain the following existence result.

\begin{teor}\label{tchee}
Under the assumptions above the minimum problem \eqref{echee} admits a solution $\Omega_{opt}$ and $M(\Omega_{opt})>0$.
\end{teor}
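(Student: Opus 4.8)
The plan is to run the direct method of the calculus of variations in the $w\gamma$-topology, which is the natural choice here because---unlike the $\gamma$-convergence---it is compact on $\A(D)$, while the scaling hypothesis (iv) is exactly the device that prevents a minimizing sequence from degenerating onto the forbidden set $\{M=0\}$ where the cost has been declared $+\infty$. The starting observation is that both factors are lower semicontinuous for this weak convergence: $M$ by assumption (iii), and $J$ because it is $\gamma$-l.s.c.\ and decreasing, so that Proposition \ref{pmonot} promotes it to $w\gamma$-l.s.c.\ as well. Having both factors lower semicontinuous for the \emph{same} (compact) convergence is what makes the scheme viable.

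First I would fix a minimizing sequence $(\Omega_n)\subset\A(D)$ with $M(\Omega_n)J(\Omega_n)\to m$, where $m$ denotes the infimum in \eqref{echee}, which we may assume to be finite (e.g.\ $\Omega=D$ provides a competitor of finite cost). Applying the scaling condition (iv) with threshold $m+1$ produces a $\delta>0$ such that $M(\Omega)<\delta$ forces $M(\Omega)J(\Omega)>m+1$; since $M(\Omega_n)J(\Omega_n)<m+1$ for $n$ large, this yields the uniform lower bound $M(\Omega_n)\ge\delta$, and consequently $J(\Omega_n)\le(m+1)/\delta=:C$ for $n$ large. Thus the minimizing sequence is confined to the non-degenerate regime $\{M\ge\delta,\ J\le C\}$.

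Next I would invoke the compactness of the $w\gamma$-convergence (the remark following Definition \ref{dwgamma}): the functions $w_{\Omega_n}=\RR_{\Omega_n}(1)$ are bounded in $H^1_0(D)$, so along a subsequence they converge weakly to some $w$, and $\Omega:=\{w>0\}\in\A(D)$ is the $w\gamma$-limit. The $w\gamma$-l.s.c.\ of $J$ gives $J(\Omega)\le\liminf_nJ(\Omega_n)\le C<\infty$, and I claim this finiteness forces $M(\Omega)>0$, which is already the second assertion of the theorem (this is the delicate step, discussed below). Granting it, and using that $M$ and $J$ are nonnegative together with the elementary inequality $(\liminf_na_n)(\liminf_nb_n)\le\liminf_n(a_nb_n)$ valid for nonnegative sequences, I obtain
$$M(\Omega)J(\Omega)\le\Bigl(\liminf_nM(\Omega_n)\Bigr)\Bigl(\liminf_nJ(\Omega_n)\Bigr)\le\liminf_nM(\Omega_n)J(\Omega_n)=m;$$
since $M(\Omega)>0$, the left-hand side is the genuine cost $G(\Omega)$ rather than the conventional value $+\infty$, so $\Omega$ realizes the minimum.

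The crux is precisely the non-degeneracy $M(\Omega)>0$, and the obstacle is structural: lower semicontinuity of $M$ only bounds $M(\Omega)$ from above by $\liminf_nM(\Omega_n)$, so the uniform bound $M(\Omega_n)\ge\delta$ does \emph{not} by itself transfer to the limit---indeed one can keep $M$ bounded below along a sequence that nevertheless collapses in $w\gamma$ to a set of vanishing $M$. What rescues the argument is that, \emph{along a minimizing sequence}, condition (iv) simultaneously bounds $J$ from above, while the scaling hypothesis encodes that the locus $\{M=0\}$ sits inside $\{J=+\infty\}$; the $w\gamma$-lower semicontinuity of $J$ then transmits the finiteness $J(\Omega)\le C$ to the limit and so excludes $M(\Omega)=0$. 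Making this implication rigorous---that a $w\gamma$-limit of finite $J$ cannot be degenerate for $M$---is the heart of the proof, and it is exactly the situation for which the Cheeger scaling condition is designed.
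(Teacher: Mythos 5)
Your argument is correct and is essentially the paper's own proof, made more explicit: the paper likewise runs the direct method in the compact $w\gamma$-topology, invokes Proposition \ref{pmonot} to make $J$ (and hence the product $MJ$) $w\gamma$-lower semicontinuous, and simply asserts that the $w\gamma$-limit satisfies $M(\Omega)>0$ ``by the assumptions above''. The non-degeneracy step you label as the one remaining delicate point is in fact already closed by your own reading of hypothesis (iv): once the scaling condition is understood as forcing $\{M=0\}\subset\{J=+\infty\}$ (the reading under which the theorem holds, and the one the paper implicitly uses), the chain $J(\Omega)\le\liminf_n J(\Omega_n)\le C<+\infty$ immediately yields $M(\Omega)>0$, so nothing further needs to be made rigorous.
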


\begin{proof}
If $(\Omega_n)$ is a minimizing sequence, by the compactness of the $w\gamma$-convergence (see Section \ref{ssweakg}) we may assume that $\Omega_n$ tend in $w\gamma$ to some $\Omega\in\A(D)$, with $M(\Omega)>0$ by the assumptions above. By the properties of $w\gamma$-convergence and by Proposition \ref{pmonot} the functional $J$ is $w\gamma$-lower semicontinuous, as well as the product $MJ$, which allows to conclude that $\Omega$ is an optimal domain for the minimum problem \eqref{echee}.
\end{proof}

\begin{exam}
The following cases fulfill the assumptions of Theorem \ref{tchee}.
\begin{itemize}
\item Let $J(\Omega)=\Phi\big(\lambda(\Omega)\big)$ where $\lambda(\Omega)$ denotes the spectrum of the Dirichlet Laplacian in $\Omega$ and $\Phi$ is a lower semicontinuous function increasing in each variable and positively $p$-homogeneous. Taking $M(\Omega)=|\Omega|^\alpha$ we have that the assumptions of Theorem \ref{tchee} are fulfilled whenever $\alpha<2p/d$, so that in this case the minimization problem
$$\min\big\{|\Omega|^\alpha\Phi\big(\lambda(\Omega)\big)\ :\ \Omega\in\A(D)\big\}$$
admits a solution.

\item The case $J(\Omega)$ as above with $M(\Omega)=|\per(\Omega)|^\alpha$ does not fall into the framework of Theorem \ref{tchee} because the shape functional $\per(\Omega)$ is not $w\gamma$-lower semicontinuous (see Section \ref{ssperim}). However, using the arguments of Section \ref{ssperim}, we obtain that the minimization problem
$$\min\big\{|\per(\Omega)|^\alpha\Phi\big(\lambda(\Omega)\big)\ :\ \Omega\in\A(D)\big\}$$
admits a solution whenever $\alpha<2p/(d-1)$.

\item Consider the integral functional
$$I(\Omega)=\int_Da(x)|u_\Omega(x)|^p\,dx$$
where $u_\Omega$ is the solution of the Dirichlet problem in $\Omega$
$$-\Delta u=f\hbox{ in }\Omega,\qquad u\in H^1_0(\Omega),$$
extended by zero on $D\setminus\Omega$. Assume that $a\in L^q$ is nonnegative, with $q\ge p/(p-1)$, $f\in L^2$ is nonnegative, and $0<p<2d/(d-2)$. Then the shape functional
$$J(\Omega)=1/I(\Omega)$$
fulfills the assumptions of Theorem \ref{tchee} and, taking $M(\Omega)=|\Omega|^\alpha$ we have that the minimization problem
$$\min\Big\{\frac{|\Omega|^\alpha}{I(\Omega)}\ :\ \Omega\in\A(D)\Big\}$$
admits a solution whenever $\alpha<(2p+d)/d$. A similar analysis holds for the shape functional
$$J(\Omega)=\int_Da(x)|u_\Omega(x)|^{-p}\,dx.$$

\item The case of the minimization problem
$$\min\Big\{\frac{|\per(\Omega)|^\alpha}{I(\Omega)}\ :\ \Omega\in\A(D)\Big\}$$
where $I(\Omega)$ is as above, does not fall into the framework of Theorem \ref{tchee} but the arguments of Section \ref{ssperim} apply, and we can conclude that an optimal domain exists, provided that $\alpha<(2p+d)/(d-1)$.
\end{itemize}
\end{exam}

As it happens in the Cheeger problem, due to the scaling condition we assumed, the optimal domains $\Omega_{opt}$ for the examples above have to touch the boundary $\partial D$. Here below we describe some necessary conditions of optimality, obtained in \cite{bw10} for the minimization problem
$$\min\Big\{|\Omega|^\alpha\Big(\int_Du_\Omega\,dx\Big)^{-1}\ :\ \Omega\in\A(D)\Big\}$$
where $-\Delta u_\Omega=1$ in $\Omega$, with Dirichlet boundary conditions.

\begin{teor}\label{tdomainvar}
Assuming that $\Omega_{opt}$ is smooth, and that $\alpha<(2+d)/d$, the following necessary conditions of optimality hold:
$$\left\{
\begin{array}{ll}
|\nabla u_{\Omega_{opt}}|^2=\alpha K&\hbox{ on }\partial\Omega_{opt}\cap D\\
|\nabla u_{\Omega_{opt}}|^2\ge\alpha K&\hbox{ on }\partial\Omega_{opt}\cap\partial D
\end{array}\right.
\qquad\hbox{ where }
K=\frac{1}{|\Omega_{opt}|}\int_D u_{\Omega_{opt}}\,dx.$$
\end{teor}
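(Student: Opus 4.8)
The plan is to obtain the stated relations as the first-order (Euler--Lagrange) optimality conditions for the shape functional under smooth domain perturbations. Write $T(\Omega)=\int_D u_\Omega\,dx$ for the torsional rigidity, so that the cost is $F(\Omega)=|\Omega|^\alpha/T(\Omega)$. Testing $-\Delta u_\Omega=1$ against $u_\Omega$ gives $T(\Omega)=\int_\Omega|\nabla u_\Omega|^2\,dx>0$, and by the maximum principle $u_{\Omega_{opt}}>0$ in $\Omega_{opt}$, so the constant $K=T(\Omega_{opt})/|\Omega_{opt}|$ is strictly positive. The hypothesis $\alpha<(2+d)/d$ is exactly the Cheeger scaling condition (iv) of Theorem \ref{tchee} for $M(\Omega)=|\Omega|^\alpha$ and $J=1/T$: a ball of radius $r$ has $|\Omega|^\alpha/T\sim r^{d\alpha-d-2}\to+\infty$ as $r\to0$, which guarantees that a minimizer exists and has $|\Omega_{opt}|>0$, making the computation below meaningful.

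First I would fix a vector field $V\in C^1_c(\R^d;\R^d)$, set $\Omega_t=(\mathrm{Id}+tV)(\Omega_{opt})$, and differentiate $t\mapsto F(\Omega_t)$ at $t=0$; since $\Omega_{opt}$ is assumed smooth, classical shape calculus applies. The volume derivative is immediate,
$$\frac{d}{dt}|\Omega_t|\Big|_{t=0}=\int_{\partial\Omega_{opt}}V\cdot n\,d\HH^{d-1}.$$
For the torsional rigidity I would use the shape derivative $u'$ of $u_{\Omega_{opt}}$, which is harmonic in $\Omega_{opt}$ (the right-hand side $f\equiv1$ does not depend on the domain) and carries the boundary datum $u'=-(\partial u_{\Omega_{opt}}/\partial n)(V\cdot n)$. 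Since $u_{\Omega_{opt}}=0$ on $\partial\Omega_{opt}$, the transport theorem gives $T'(\Omega_{opt})[V]=\int_{\Omega_{opt}}u'\,dx=\int_{\Omega_{opt}}u'(-\Delta u_{\Omega_{opt}})\,dx$; integrating by parts, the interior terms cancel because $u'$ is harmonic and $u_{\Omega_{opt}}$ vanishes on the boundary, leaving $-\int_{\partial\Omega_{opt}}u'\,(\partial u_{\Omega_{opt}}/\partial n)\,d\HH^{d-1}$. Substituting the datum for $u'$ and using $|\nabla u_{\Omega_{opt}}|^2=(\partial u_{\Omega_{opt}}/\partial n)^2$ on $\partial\Omega_{opt}$ (the tangential gradient vanishes there) yields the Hadamard formula
$$T'(\Omega_{opt})[V]=\int_{\partial\Omega_{opt}}|\nabla u_{\Omega_{opt}}|^2\,(V\cdot n)\,d\HH^{d-1}.$$
Combining the two and factoring out the positive quantity $|\Omega_{opt}|^{\alpha-1}T^{-2}$,
$$\frac{d}{dt}F(\Omega_t)\Big|_{t=0}=|\Omega_{opt}|^{\alpha-1}T^{-2}\int_{\partial\Omega_{opt}}\big[\alpha T-|\Omega_{opt}|\,|\nabla u_{\Omega_{opt}}|^2\big]\,(V\cdot n)\,d\HH^{d-1}.$$

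Finally I would read off the two regimes according to which part of $\partial\Omega_{opt}$ is perturbed, noting that $\alpha T-|\Omega_{opt}||\nabla u_{\Omega_{opt}}|^2=|\Omega_{opt}|(\alpha K-|\nabla u_{\Omega_{opt}}|^2)$. On the free boundary $\partial\Omega_{opt}\cap D$ the normal displacement $V\cdot n$ may take either sign, so optimality forces the bracket to vanish pointwise, giving $|\nabla u_{\Omega_{opt}}|^2=\alpha K$. On the contact part $\partial\Omega_{opt}\cap\partial D$ only inward perturbations keep $\Omega_t\subset D$, i.e.\ only $V\cdot n\le0$ is admissible; minimality then requires the bracket to be nonpositive there, which reads $|\nabla u_{\Omega_{opt}}|^2\ge\alpha K$. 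This is exactly the claimed system. The main obstacle I anticipate is the rigorous justification of the torsional-rigidity Hadamard formula on a merely smooth domain --- establishing differentiability of $t\mapsto u_{\Omega_t}$, the boundary condition for $u'$, and the validity of the boundary integration by parts --- together with the careful description of the admissible one-sided cone of variations at the contact set $\partial\Omega_{opt}\cap\partial D$; once these are in place the remaining algebra is routine.
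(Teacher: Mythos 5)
Your proposal is correct: the Hadamard domain-variation computation for $F(\Omega)=|\Omega|^\alpha/T(\Omega)$, with the quotient rule reducing optimality to the sign of $\alpha K-|\nabla u_{\Omega_{opt}}|^2$ against two-sided normal perturbations on $\partial\Omega_{opt}\cap D$ and one-sided (inward) perturbations on $\partial\Omega_{opt}\cap\partial D$, yields exactly the stated system, and your identification of $\alpha<(2+d)/d$ as the scaling condition guaranteeing a nontrivial minimizer is also accurate. The survey itself gives no proof of this theorem but attributes it to \cite{bw10}, where precisely this domain-variation argument is carried out, so your route coincides with the intended one; the technical points you flag (differentiability of $t\mapsto u_{\Omega_t}$ and the admissible cone at the contact set) are indeed the only parts requiring care beyond the algebra you present.
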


\begin{rema}Assume that $\partial D$ has isolated conical points, but is smooth otherwise. The gradient of the solution $u_{\Omega_{opt}}$ then
vanishes in the conical points and a pointwise decay estimate for the gradient holds. Theorem \ref{tdomainvar} then shows, that the optimal domain $\Omega_{opt}$ cannot fill the entire set $D$.
\end{rema}

In a similar way we can obtain necessary conditions of optimality for the minimum problem
$$\min\big\{|\Omega|^\alpha\lambda_k(\Omega)\ :\ \Omega\in\A(D)\Big\}$$
where $\alpha<2/d$. If we denote by $u$ a normalized $k$-th eigenfunction of $\Omega_{opt}$ we obtain (see \cite{bw10}):
$$\left\{
\begin{array}{ll}
|\nabla u|^2=\alpha H\hbox{ on }\partial\Omega_{opt}\cap D;\\
|\nabla u|^2\ge\alpha H\hbox{ on }\partial\Omega_{opt}\cap\partial D.
\end{array}\right.
\qquad\hbox{ where }
H=\frac{\lambda_k(\Omega_{opt})}{|\Omega_{opt}|}.$$

We conclude this section by pointing out some shape optimization problems for which the existence of a solution is still unavailable. For a fixed $k\ge1$ consider the minimization problem
$$\min\big\{\lambda_k^\alpha(\Omega)\int_Du_\Omega\,dx\ :\ \Omega\in\A(D)\big\}$$
with $\alpha>1+d/2$. By the results of \cite{kj78a,kj78b} we have
$$\lambda_1^{1+d/2}(\Omega)\int_Du_\Omega\,dx\ge\lambda_1^{1+d/2}(B)\int_Du_B\,dx$$
for any ball $B$, so that the scaling condition
$$\lim_{M(\Omega)\to0}M(\Omega)J(\Omega)=+\infty$$
is fulfilled whenever $\alpha>1+d/2$, taking
$$J(\Omega)=\lambda_k^\alpha(\Omega)\quad\hbox{and}\quad M(\Omega)=\int_Du_\Omega\,dx.$$
However, the $w\gamma$-lower semicontinuity condition fails for $M(\Omega)$, as it can be easily seen by a sequence of finely perforated domains like the Cioranescu and Murat ones (see Section \ref{ssmeas}), and so the existence Theorem \ref{tchee} cannot be applied. It would be interesting to prove (or disprove) that an optimal domain $\Omega_{opt}$ for the problem above exists.

\subsection{Spectral flows}\label{ssflow}

In \cite{dg93} De Giorgi introduced a very general theory to study evolution problems with an underlying variational structure. The theory was called {\it minimizing movement theory} and its framework is so flexible to be applied both to quasistatic evolutions as well as to gradient flows, under rather mild assumptions. Here we limit ourselves to recall the part of the scheme which is interesting for our purposes, referring to \cite{ags05} for further details.

Let $(X,d)$ be a complete metric space, let $u_0\in X$ be an initial condition, and let $F:X\to]-\infty,+\infty]$ be a functional defined on $X$. For every fixed $\eps>0$ the implicit Euler scheme of time step $\eps$ and initial condition $u_0$ consists in constructing a function $u_\eps(t)=w([t/\eps])$, where $[\cdot]$ stands for the integer part function, in the following way:
$$w(0)=u_0,\qquad w(n+1)\in\argmin\Big\{F(v)+\frac{d^2(v,w(n))}{2\eps}\Big\}.$$

\begin{defi}\label{dmovem}
If $\sigma$ is another topology on $X$, we say that $u:[0,T]\to X$ is a generalized minimizing movement (or a variational flow) associated to $F$ and $\sigma$, with initial condition $u_0$, and we write $u\in GMM(F,\sigma,u_0)$, if there exist a sequence $\eps_n\to0$ such that
$$u_{\eps_n}(t)\to u(t)\hbox{ in }\sigma\qquad\forall t\in[0,T].$$ 
\end{defi}

The simplest situation occurs when $\sigma$ is the topology of the metric $d$; in this case the following existence result for variational flows holds (see Proposition 2.2.3 of \cite{ags05}).

\begin{teor}\label{texmm}
If $F$ is $d$-lower semicontinuous and $d$-coercive, in the sense that its sublevels $\{F\le k\}$ are $d$-compact in $X$, then for every initial condition $u_0\in X$ there exists $u\in GMM(F,d,u_0)$. Moreover, $u$ belongs to the space $AC^2\big((0,T);X\big)$ of mappings such that, for a suitable $m\in L^2(0,T)$
$$d\big(u(s),u(t)\big)\le\int_s^t m(r)\,dr\qquad\forall s,t\in[0,T].$$
\end{teor}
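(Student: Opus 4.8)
The plan is to exploit the variational structure of the implicit Euler scheme to derive $\eps$-uniform a priori estimates, and then to pass to the limit $\eps\to0$. First I would verify that the scheme is well posed. Since the sublevels $\{F\le k\}$ are $d$-compact and $F$ is $d$-lower semicontinuous, $F$ attains its infimum and in particular $F_0:=\inf_X F>-\infty$. For a fixed point $w(n)$ the functional $v\mapsto F(v)+d^2(v,w(n))/(2\eps)$ is $d$-lower semicontinuous, and any minimising sequence $(v_j)$ satisfies $F(v_j)\le F(w(n))$ and $d^2(v_j,w(n))\le2\eps\big(F(w(n))-F_0\big)$, so it is contained in a fixed sublevel of $F$, hence in a compact set; extracting a convergent subsequence and using lower semicontinuity produces a minimiser. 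Thus $w(n)$, and therefore $u_\eps$, is defined for every $n$.

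The second step is the basic energy estimate. Comparing the minimiser $w(n+1)$ with the competitor $v=w(n)$ gives
$$F(w(n+1))+\frac{d^2(w(n+1),w(n))}{2\eps}\le F(w(n)),$$
so $n\mapsto F(w(n))$ is nonincreasing, whence $F(w(n))\le F(u_0)$, and summing the telescoping inequality yields the fundamental bound
$$\sum_{n\ge0}\frac{d^2(w(n+1),w(n))}{\eps}\le2\big(F(u_0)-F_0\big)=:C^2,$$
which is independent of $\eps$.

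Next I would package these estimates into a discrete speed. Setting $|u_\eps'|(r)=d(w(n+1),w(n))/\eps$ for $r\in(n\eps,(n+1)\eps]$, the previous bound reads $\int_0^T|u_\eps'|^2\,dr\le C^2$, so $|u_\eps'|$ is bounded in $L^2(0,T)$. Moreover $F(u_\eps(t))\le F(u_0)$ places $u_\eps(t)$ in the fixed compact set $\{F\le F(u_0)\}$ for every $t$, and for $0\le s<t$, writing $m=[s/\eps]$ and $k=[t/\eps]$, the triangle inequality together with Cauchy--Schwarz gives
$$d(u_\eps(s),u_\eps(t))\le\sum_{n=m}^{k-1}d(w(n),w(n+1))\le C\,(t-s+\eps)^{1/2}.$$
This is an approximate equicontinuity estimate. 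With the pointwise relative compactness above in hand, a refined Arzel\`a--Ascoli argument then yields a sequence $\eps_n\to0$ and a map $u:[0,T]\to X$ with $u_{\eps_n}(t)\to u(t)$ for every $t$, so that $u\in GMM(F,d,u_0)$ in the sense of Definition \ref{dmovem}. By weak compactness we may also assume $|u_{\eps_n}'|\rightharpoonup m$ in $L^2(0,T)$ for some $m$ with $\|m\|_{L^2}\le C$; testing this weak convergence against the indicators of the relevant intervals, while using the continuity of $d$, lets the inequality $d(u_{\eps_n}(s),u_{\eps_n}(t))\le\int_{s_n}^{t_n}|u_{\eps_n}'|\,dr$, with grid endpoints $s_n\to s$ and $t_n\to t$, pass to the limit --- the $\eps_n$ error disappearing --- to give $d(u(s),u(t))\le\int_s^t m(r)\,dr$; hence $u\in AC^2\big((0,T);X\big)$.

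I expect the main obstacle to be the compactness step producing $u$. The interpolants $u_\eps$ are piecewise constant, hence genuinely discontinuous, and the estimate above is only an approximate equicontinuity with an error $O(\eps^{1/2})$; to extract a single subsequence along which $u_{\eps_n}(t)$ converges at \emph{every} $t\in[0,T]$ one must first diagonalise over a countable dense set of times, using compactness of $\{F\le F(u_0)\}$, and then invoke the uniform bound $d(u_\eps(s),u_\eps(t))\le C(t-s+\eps)^{1/2}$ to propagate the convergence to all $t$ and to check that the limit is independent of the chosen dense set. Reconciling the weak $L^2$-limit $m$ of the discrete speeds with the pointwise limit $u$ in the final inequality is the other point requiring care.
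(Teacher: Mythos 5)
The paper itself contains no proof of this theorem: it is quoted from the literature (``see Proposition 2.2.3 of \cite{ags05}''), so the only meaningful comparison is with the standard De Giorgi / Ambrosio--Gigli--Savar\'e minimizing-movement argument. Your proposal is exactly that argument --- direct method on compact sublevels for each Euler step, telescoping energy estimate, discrete speed bounded in $L^2(0,T)$, approximate equicontinuity $d(u_\eps(s),u_\eps(t))\le C(t-s+\eps)^{1/2}$, diagonal/refined Arzel\`a--Ascoli extraction giving convergence at \emph{every} $t$ along a single sequence $\eps_n\to0$, and the pairing of the strongly convergent indicators $1_{(s_n,t_n)}$ with the weakly convergent speeds to pass the length estimate to the limit --- and all of its steps are sound, including the two delicate points you single out at the end.

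One caveat, which is really a defect of the survey's paraphrase of the statement rather than of your argument: everything rests on $F(u_0)<+\infty$ (and $F\not\equiv+\infty$), since otherwise the telescoping bound $\sum_n d^2\big(w(n+1),w(n)\big)/\eps\le2\big(F(u_0)-F_0\big)$ is vacuous, whereas the theorem as written allows arbitrary $u_0\in X$. For $u_0\notin\dom F$ the ``Moreover'' part can genuinely fail: on $X=\{0,1\}$ with $d(0,1)=1$, $F(0)=0$, $F(1)=+\infty$, $u_0=1$, the discrete solution jumps to $0$ after one step, so the unique minimizing movement is $u(0)=1$, $u(t)=0$ for $t>0$, which admits no $m\in L^2(0,T)$ as in the statement; similarly, for $F(x)=1/x$ on $X=[0,1]$ and $u_0=0\in\overline{\dom F}$ the minimizing movement is $u(t)=(3t)^{1/3}$, whose metric speed $(3t)^{-2/3}$ is not square-integrable near $t=0$. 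This is precisely why the cited source imposes conditions on the initial datum (membership in the domain, or discrete initial data with controlled energy) and states the $AC^2$ regularity only locally in the open interval $(0,T)$; handling initial data of infinite energy would require an additional discrete regularizing estimate that your scheme does not provide. Under the implicit --- and, as the examples show, necessary --- assumption $u_0\in\dom F$, your proof is complete and in fact yields slightly more than claimed: $u$ is $\tfrac12$-H\"older on all of $[0,T]$ and the $AC^2$ bound holds up to $t=0$.
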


The result above applies straightforward, thanks to Proposition \ref{pmeas} (i), to the case $X=\M_0(D)$ endowed with the compact distance $d_\gamma$. By Theorem \ref{texmm} for every initial condition $\mu_0\in\M_0(D)$ there exists a $GMM(F,d_\gamma,\mu_0)$ flow $\mu(t)$ which is of class $AC^2\big((0,T);\M_0(D)\big)$.

There is a natural one-to-one map between the class of capacitary measures $\M_0(D)$ and the closed convex set of $L^2(D)$
$$K=\{w\in H^1_0(D)\ :\ w\ge0,\ 1+\Delta w\ge0\},$$
given by
$$\mu\mapsto\RR_\mu(1):=w_\mu,\quad\hbox{with inverse}\quad w\mapsto\mu_w=\frac{1+\Delta w}{w}\;.$$
Moreover, the metric structures on $\M_0(D)$ and $K$ are the same, since 
$$d_\gamma(\mu_1,\mu_2)= \|w_{\mu_1}-w_{\mu_2}\|_{L^2(D)}.$$
Therefore, every functional $F:\M_0(D)\to\overline\R$ can be identified with a functional $J:K\to\overline\R$ by
$$F(\mu)=J(w_\mu)\quad\hbox{or equivalently}\quad J(w)=F(\mu_w).$$
The variational flow for $F$ in $\M_0(D)$ can be then obtained through the variational flow for $J$ in the metric space $K$ endowed with the $L^2(D)$ distance, generated by the implicit Euler scheme
\begin{equation}\label{eimplw}
w_\eps^{n+1}\in\argmin\Big\{J(w)+\frac{1}{2\eps}\int_D|w-w_\eps^n|^2\,dx\Big\}.
\end{equation}
For instance, we may consider a functional $J$ of integral type, of the form
$$J(w)=\int_D j(x,w(x))\,dx,$$
where $j:D\times\R\to\R$ is a suitable integrand. In particular, we may take $j(x,w)=-w$ which leads to the energy of the system for the constant force $f\equiv1$, or  $j(x,w)=w$ which gives the torsional rigidity. If $j(x,s)$ is convex and lower semicontinuous in the second variable the variational flow for capacitary measures is reduced to the gradient flow of a convex lower semicontinuous map on $L^2(D)$.

An interesting question is the following: if we start from an initial condition which is a quasi-open set, i.e. $\mu_0=\infty_{D\setminus\Omega_0}$, in which cases the flow remains in the family of quasi-open sets? It can be shown that in general this does not happen, at least at the discrete level. This kind of phenomenon was numerically observed in the framework of quasi-static debonding membranes \cite{bbl08}.

We may consider, instead of capacitary measures, flows of shapes by endowing the class $\A(D)$ of quasi-open subsets of $D$ with a suitable distance. There are no ``standard'' distances on $\A(D)$ and several choices can be made. A first possibility is to consider the Lebesgue measure of the symmetric difference set
$$d_{char}(\Omega_1,\Omega_2)=|\Omega_1\symd\Omega_2|.$$
Since two quasi-open sets may differ for a negligible set (as for instance in $\R^2$ a disk and a disk minus a segment), this is not a proper metric in $\A(D)$, so that one should consider equivalence classes in the family of shapes.

The distance $d_{char}$ is not compact on $\A(D)$; nevertheless, for $\gamma$-lower semicontinuous functionals which are monotone decreasing for the set inclusion, we have the following result.

\begin{teor}\label{bba07}
Let $F:\A(D)\to\overline\R$ be a $\gamma$-lower semicontinuous functional monotone decreasing for the set inclusion, and let $\Omega_0\in\A(D)$. There exists a GMM map $t\mapsto\Omega(t)$ associated to $F$ and to the distance $d_{char}$, with initial condition $\Omega_0$. Moreover, the flow $\Omega(t)$ is increasing for the set inclusion.
\end{teor}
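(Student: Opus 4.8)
The plan is to realise the flow through the implicit Euler (minimizing movement) scheme of Definition \ref{dmovem} with $d=d_{char}$, building it by hand since Theorem \ref{texmm} does not apply directly: $d_{char}$ is not compact on $\A(D)$, so the abstract coercivity hypothesis fails. Fix $\eps>0$, set $\Omega_\eps^0=\Omega_0$, and define inductively $\Omega_\eps^{n+1}$ as a minimizer of $\Omega\mapsto F(\Omega)+\frac{1}{2\eps}|\Omega\symd\Omega_\eps^n|^2$ over $\A(D)$. The first key observation is that this minimizer may be taken to contain $\Omega_\eps^n$: for any competitor $\Omega$, replacing it by $\Omega\cup\Omega_\eps^n$ does not increase $F$ (by monotonicity) and does not increase the penalization, since $|(\Omega\cup\Omega_\eps^n)\symd\Omega_\eps^n|=|\Omega\setminus\Omega_\eps^n|\le|\Omega\symd\Omega_\eps^n|$. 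Hence the discrete flow is increasing, $\Omega_0=\Omega_\eps^0\subseteq\Omega_\eps^1\subseteq\cdots$, which already yields the monotonicity asserted in the statement and will be inherited by the limit.

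Existence of each $\Omega_\eps^{n+1}$ follows by the direct method on the restricted class $\{\Omega\supseteq\Omega_\eps^n\}$, where the penalization reads $(|\Omega|-|\Omega_\eps^n|)^2/(2\eps)$. Take a minimizing sequence and extract, by compactness of the $w\gamma$-convergence (Section \ref{ssweakg}), a $w\gamma$-limit $\Omega_*$. By Proposition \ref{pmonot} the functional $F$ is $w\gamma$-lower semicontinuous; by Proposition \ref{plebes} the Lebesgue measure is $w\gamma$-lower semicontinuous; and the comparison principle applied to the torsion functions $\RR_{\Omega_k}(1)\ge\RR_{\Omega_\eps^n}(1)$ passes to the weak $H^1_0(D)$ limit and forces $\Omega_*\supseteq\Omega_\eps^n$, so that the penalization is lower semicontinuous along the sequence as well. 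Thus $\Omega_*$ is the desired minimizer. Testing its minimality against the competitor $\Omega_\eps^n$ gives $F(\Omega_\eps^{n+1})+\frac{1}{2\eps}d_{char}^2(\Omega_\eps^{n+1},\Omega_\eps^n)\le F(\Omega_\eps^n)$, so $F$ decreases along the scheme; since monotonicity gives $F\ge F(D)$, summation yields $\sum_n d_{char}^2(\Omega_\eps^{n+1},\Omega_\eps^n)/(2\eps)\le F(\Omega_0)-F(D)$. Because the flow is increasing the $d_{char}$-length telescopes, $\sum_n d_{char}(\Omega_\eps^{n+1},\Omega_\eps^n)=\lim_n|\Omega_\eps^n|-|\Omega_0|\le|D|$, and a Cauchy--Schwarz estimate produces the uniform bound $d_{char}(\Omega_\eps(s),\Omega_\eps(t))\le C(t-s+\eps)^{1/2}$ for the interpolants $\Omega_\eps(t)=\Omega_\eps^{[t/\eps]}$.

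The \textbf{main obstacle} is the passage to the limit $\eps\to0$, precisely because $d_{char}$ is not compact: H\"older control in time does not by itself yield a convergent subsequence of sets. I would proceed in two stages. The real-valued nondecreasing functions $t\mapsto|\Omega_\eps(t)|$ are uniformly H\"older and bounded by $|D|$, so along a subsequence they converge uniformly to a (continuous, nondecreasing) $v$; simultaneously, the torsion functions $w_\eps(t)=\RR_{\Omega_\eps(t)}(1)$ are bounded in $H^1_0(D)$ and increasing in $t$, and a diagonal extraction over a countable dense set of times $T_0$ gives $\eps_n\to0$ with $w_{\eps_n}(t)\rightharpoonup w(t)$ weakly in $H^1_0(D)$ for $t\in T_0$; set $\Omega(t)=\{w(t)>0\}$, increasing in $t$.

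The heart of the proof is to upgrade this $w\gamma$-convergence to the required $d_{char}$-convergence, i.e.\ strong $L^1(D)$ convergence of the characteristic functions. Since $w_{\eps_n}(t)\to w(t)$ in $L^2(D)$ one has $\liminf_n 1_{\Omega_{\eps_n}(t)}\ge 1_{\Omega(t)}$ a.e., hence $|\Omega(t)|\le v(t)$; the crucial reverse inequality $|\Omega(t)|=v(t)$ --- that no Lebesgue volume is lost in the limit --- is exactly where the monotone structure must be exploited to exclude a Cioranescu--Murat type volume-versus-capacity gap, and once it holds a Scheff\'e-type argument (the liminf inequality together with convergence of the integrals) upgrades the convergence to $L^1(D)$, that is, to $d_{char}$. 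Finally, the uniform H\"older estimate provides equicontinuity in time, so a Cauchy argument extends the $d_{char}$-convergence from $T_0$ to every $t\in[0,T]$; the inclusions $\Omega(s)\subseteq\Omega(t)$ for $s<t$ are inherited from the discrete flows, and $t\mapsto\Omega(t)$ is the desired increasing GMM associated to $F$ and $d_{char}$.
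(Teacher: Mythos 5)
Your construction is the natural skeleton for this statement (which the survey states without proof), and most of it is sound: the union trick showing that each discrete minimizer can be taken to contain the previous set, the existence of each step by the direct method in the $w\gamma$-topology via Propositions \ref{plebes} and \ref{pmonot}, the energy estimate against the competitor $\Omega_\eps^n$, the telescoping of $d_{char}$-lengths along an increasing flow, the Scheff\'e-type upgrade (which is indeed valid once the volumes converge), and the extension from a dense set of times to all times by equicontinuity. The problem is that the argument stops exactly at the point you yourself call the heart of the proof. The equality $|\Omega(t)|=v(t)$ --- no loss of Lebesgue volume in the limit --- is announced as ``exactly where the monotone structure must be exploited,'' but no argument is given: that sentence is a placeholder, not a proof. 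Since $d_{char}$ is not compact on $\A(D)$ and characteristic functions of quasi-open sets enjoy no $L^1$-compactness whatsoever, this missing step carries the entire content of the theorem; what you have actually established only produces a limit flow in the much weaker $w\gamma$ sense, i.e.\ a GMM for a topology different from the $d_{char}$ one required by the statement.

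Moreover, this step cannot be supplied by monotonicity of the flow alone, which is the only structure you invoke for it. Monotonicity acts in $t$ at fixed $\eps$; at fixed $t$ the sets $\Omega_{\eps_n}(t)$ along different $\eps_n$ satisfy no inclusion relations, and nothing proved so far prevents them from carrying ``dust'': $N$ disjoint balls of radius $r$ with $N\omega_d r^d=\delta$ fixed and $r\to0$. The torsion function of such a configuration is $O(r^2)$ in $L^\infty$ and $o(1)$ in $H^1_0(D)$, so the dust vanishes in the $w\gamma$-limit while carrying volume $\delta$, producing precisely the strict inequality $|\Omega(t)|<v(t)$ you must exclude. (Note also that the dangerous regime is not the Cioranescu--Murat configuration you cite: there the limit torsion function is strictly positive and no volume is lost; the enemy is the opposite regime, vanishing torsion with persistent volume.) Ruling dust out requires combining the per-step minimality with the $\gamma$-lower semicontinuity of $F$: since $\gamma$-l.s.c.\ and monotonicity give $F(\Omega\cup\hbox{dust})\to F(\Omega)$, dust cannot asymptotically lower $F$, while it strictly raises the quadratic penalization --- but to turn this into a contradiction one must select minimizers appropriately (for instance of minimal volume among all minimizers of a given step) and, much more delicately, transfer the per-step information to a fixed time $t$, where the lost volume may have been deposited in $o(1)$ amounts over the $\approx t/\eps_n$ steps, so that no single step exhibits an evident violation of minimality. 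None of this is in the proposal; until it is carried out (together with the minor caveats that the energy estimate needs $F(\Omega_0)<+\infty$ and $F(D)>-\infty$, and that at times $t\notin T_0$ the limit set should be defined as the increasing union $\bigcup_{s\in T_0,\,s<t}\Omega(s)$ to remain quasi-open), the statement is not proved.
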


For instance, spectral functionals of the form
$$F(\Omega)=\Phi\big(\lambda(\Omega)\big),$$
where $\lambda(\Omega)$ is the spectrum of the Dirichlet Laplacian in $\Omega$ and $\Phi:\R^k\to\overline\R$ is increasing in each variable and lower semicontinuous, fulfill the assumptions above.

Several questions about the flow $\Omega(t)$ arise.
\begin{itemize}
\item The flow $\Omega(t)$ is not obtained through Theorem \ref{texmm} so the functional $F$ could be discontinuous on the curve $\Omega(t)$; it would be interesting to analyze this issue and to identify some cases where this continuity occurs.

\item In $\R^2$, assume that $\Omega_0$ is simply connected. Prove or disprove that the GMM associated to $\lambda_1$ in the framework of Theorem \ref{bba07} consists only on simply connected open sets.

\item Assume that $\Omega_0$ is convex. Prove or disprove that a GMM associated to $\lambda_1$ in the framework of Theorem \ref{bba07} consists only on convex sets.

\item Assume that $\Omega_0$ is convex. Is it true that any minimizer of
$$\min_{\Omega_0\subset\Omega}\lambda_1(\Omega)+|\Omega|,$$
is convex?

\item Is it true that the GMM associated to $\lambda_1$ in the framework of Theorem \ref{bba07} converges, up to a suitable rescaling, to a ball?

\item Prove or disprove that the metric derivative of $\lambda_1$, computed at a bounded smooth set $\Omega$ is given by ($u_1$ denotes the normalized first eigenfunction)
$$|\lambda_1'(\Omega)|
=\limsup_{|\Omega_n\setminus\Omega|\to0,\ \Omega\subset\Omega_n}\frac{\lambda_1(\Omega)-\lambda_1(\Omega_n)}{|\Omega_n\setminus\Omega|}
=\max_{\partial\Omega}\Big|\frac{\partial u_1}{\partial n}\Big|^2.$$

\end{itemize}

Other possibilities for defining spectral flows are possible, as for instance the frameworks below.

\begin{itemize}\renewcommand{\labelitemi}{$\circ$}
\item Work in the class of sets with prescribed measure $\{\Omega\in\A(D)\ :\ |\Omega|=m\}$.

\item Introduce a penalization on the perimeter, that is work with the functional $F(\Omega)+\per(\Omega)$.

\item Work with the Hausdorff complementary distance $d_{H^c}$ in the family of open subsets of $D$:
$$d_{H^c}(\Omega_1,\Omega_2)=\max_{x\in\overline D}|d(x,\overline D\setminus\Omega_1)-d(x,\overline D\setminus\Omega_2)|.$$

\item Work in the class of convex open subsets $\Omega$ metrized by one of the distances:\\
- The Hausdorff distance;\\
- The $L^1$ distance $d_{char}$ of the characteristic functions;\\
- The $L^2$ distance of the oriented distance functions.
\end{itemize}

\subsection{Other kinds of boundary conditions}\label{ssneum}

All the results presented in the previous sections are concerned with the framework of {\it Dirichlet boundary conditions}. The main reason why this  framework allows to treat the various questions we have seen, is the monotonicity relation
$$f\ge0,\ \Omega_1\subset\Omega_2\quad\Rightarrow\quad\RR_{\Omega_1}(f)\le\RR_{\Omega_2}(f)\hbox{ and }\lambda(\Omega_2)\le\lambda(\Omega_1).$$
When working with different boundary conditions, the monotonicity properties above are no more true, which creates several difficulties if one wants to obtain similar kinds of results. In addition, while a function $u\in H^1_0(\Omega)$ is easily extended to $H^1_0(D)$ by putting $u=0$ outside $\Omega$, similar extension operators do not exist if the Dirichlet boundary condition is replaced by {\it Neumann} or {\it Robin} type conditions. Finally, in order to work with solutions of elliptic PDEs with Neumann or Robin boundary conditions, some mild regularity on the domains $\Omega$ has to be required, which could be lost when passing to the limit on sequences $(\Omega_n)$.

Nevertheless, some shape optimization results are known in these more difficult frameworks; we mention few of them together with some open problems, referring for instance to \cite{hen06} for more details.

\bigskip

$\bullet$ {\it Neumann boundary conditions.}
If we denote by $\mu(\Omega)$ the spectrum of the Neumann-Laplacian on the domain $\Omega$ (notice that $\mu_1(\Omega)=0$ for all $\Omega$), the minimization problem for $\mu_k(\Omega)$ under the volume constraint $|\Omega|=m$ has a trivial solution. Indeed, we have $\mu_k(\Omega)=0$ for any domain $\Omega$ which has at least $k$ connected components.

If we impose that the domains $\Omega$ must be convex and with a given diameter, then the infimum is not zero, but it is not achieved, as shown in \cite{pawe60}.

A more interesting problem is the {\it maximization} of $\mu_k(\Omega)$ under a volume constraint. It is known (see \cite{sz54,we56}) that for $\mu_2(\Omega)$ the maximum is attained for $\Omega$ a ball, which are the only maximizers. An analogous result for $k\ge3$ (i.e. existence of optimal domains and possibly their characterization) is not known. Similarly, the existence of a maximizing domain for $\mu_2(\Omega)$, with the constraint $|\Omega|=m$, is not known if we impose the constraint $\Omega\subset D$ with $D$ narrow enough to not contain a ball of measure $m$.

Some more issues on spectral optimization problems for the Neumann-Laplacian, together with related numerical computations, can be found in \cite{bbbs06}. One could also consider mixed boundary conditions: Dirichlet on one part and Neumann on the remaining part. We mention the paper \cite{couh99} where this problem has been studied, and \cite{ar95} where the behaviour of the spectrum of the Neumann Laplacian under boundary perturbations is analyzed. Finally, some results are available for minimization problems of functions of reciprocals of $\mu_k$ under volume constraint (see \cite{hen06}) as:
$$\frac{1}{\mu_2(\Omega)}+\frac{1}{\mu_3(\Omega)}\;,\qquad\sum_{k\ge2}\frac{1}{\mu_k(\Omega)}\;,\qquad\sum_{k\ge2}\frac{1}{\mu_k^2(\Omega)}\;.$$

\bigskip

$\bullet$ {\it Robin boundary conditions.}
If $\Omega$ is a sufficiently regular open subset of $\R^d$ the Robin eigenvalue problem for the Laplace operator is given by
$$-\Delta u=\eta u\hbox{ in }\Omega,\qquad\frac{\partial u}{\partial n}+\beta u=0\hbox{ on }\partial\Omega,$$
where $\beta>0$ is a given real number. It is known (see for instance \cite{da06,buda10,bo86,bo88}) that, denoting by $\eta_1(\Omega)$ the smallest Robin eigenvalue, the minimum of $\eta_1(\Omega)$ among the class of bounded Lipschitz sets $\Omega$ with prescribed volume is achieved on balls, which are also the unique minimizers.

However, the problem of setting spectral optimization problems with Robin boundary conditions for general nonsmooth domains, faces some difficulties dues to the presence of boundary terms. In \cite{bugi10} a new approach has been proposed, which, using the features of $SBV$ functions, introduced by De Giorgi for treating free discontinuity problems (see \cite{afp01}), allows to define $\eta_1(\Omega)$ also for nonsmooth domains. Denoting by $SBV^{1/2}(\R^d)$ the space
$$SBV^{1/2}(\R^d)=\big\{u:\R^d\to[0,+\infty[\hbox{ measurable and } u^2\in SBV(\R^d)\big\}$$
we may define $\eta_1(\Omega)$ for a general open set $\Omega$ through the Rayleigh type formula
$$\eta_1(\Omega)=\min\Big\{
\int_{\R^d}|\nabla u|^2\,dx+\beta\int_{J_u}\big[(u^+)^2+(u^-)^2\big]\,d\HH^{d-1}\ :\ \int_{\R^d}u^2\,dx=1\Big\}$$
where the minimum above is taken over the class of functions $u\in SBV^{1/2}(\R^d)$ with $|\spt u\setminus\Omega|=\HH^{d-1}(J_u\setminus\partial\Omega)=0$, and $\nabla u$ is the approximate gradient of $u$, $J_u$ is the set of discontinuity points of $u$, $u^\pm$ are the traces of $u$ on $J_u$ from the two sides. In this way in \cite{bugi10} it is proved that for every open set $\Omega$ the isoperimetric inequality
$$\eta_1(\Omega)\ge\eta_1(B)$$
holds, where $B$ is a ball with $|B|=|\Omega|$. Moreover, equality holds if and only if $\Omega$ is a ball up to a negligible set.

However, all the questions of general spectral optimization problems still remain open, as:
\begin{itemize}
\item[-] existence results under the constraint $\Omega\subset D$;
\item[-] treating more general spectral costs as $\Phi\big(\eta(\Omega)\big)$;
\item[-] regularity of optimal domains.
\end{itemize}


\end{document}